\documentclass[a4paper,12pt]{amsart}

\usepackage{mathtools}
\usepackage{amssymb}
\usepackage{amsthm}
\usepackage{bm}
\usepackage{mathrsfs}
\usepackage{mathcomp}

\usepackage{enumitem}

\newlist{enumarabic}{enumerate}{5}
\setlist[enumarabic]{label=\arabic*}
\newlist{enumarabicd}{enumerate}{5}
\setlist[enumarabicd]{label=\arabic*.}
\newlist{enumarabicp}{enumerate}{5}
\setlist[enumarabicp]{label=(\arabic*)}

\newlist{enumAlph}{enumerate}{5}
\setlist[enumAlph]{label=\Alph*}
\newlist{enumAlphd}{enumerate}{5}
\setlist[enumAlphd]{label=\Alph*.}
\newlist{enumAlphp}{enumerate}{5}
\setlist[enumAlphp]{label=(\Alph*)}

\newlist{enumalph}{enumerate}{5}
\setlist[enumalph]{label=\alph*}
\newlist{enumalphd}{enumerate}{5}
\setlist[enumalphd]{label=\alph*.}
\newlist{enumalphp}{enumerate}{5}
\setlist[enumalphp]{label=(\alph*)}

\newlist{enumRoman}{enumerate}{5}
\setlist[enumRoman]{label=\Roman*}
\newlist{enumRomand}{enumerate}{5}
\setlist[enumRomand]{label=\Roman*.}
\newlist{enumRomanp}{enumerate}{5}
\setlist[enumRomanp]{label=(\Roman*)}

\newlist{enumroman}{enumerate}{5}
\setlist[enumroman]{label=\roman*}
\newlist{enumromand}{enumerate}{5}
\setlist[enumromand]{label=\roman*.}
\newlist{enumromanp}{enumerate}{5}
\setlist[enumromanp]{label=(\roman*)}

\usepackage{graphicx}
\usepackage[dvipsnames,svgnames,table]{xcolor}

\usepackage{float}
\usepackage{array}
\usepackage{booktabs}
\usepackage{tabularx}
\usepackage{longtable}
\usepackage{multirow}

\usepackage{tikz}
\usetikzlibrary{arrows}
\usetikzlibrary{calc}
\usetikzlibrary{intersections}
\usetikzlibrary{patterns}
\usetikzlibrary{quotes}
\usetikzlibrary{shapes}
\usetikzlibrary{through}
\usepackage{tikz-cd}
\usepackage{mdframed}

\usepackage[
  bookmarks=true,%
  bookmarksnumbered=true,%
  setpagesize=false,%
  colorlinks=true,%
  linkcolor=Blue,%
  citecolor=YellowOrange,%
  urlcolor=HotPink,%
]{hyperref}

\usepackage{cleveref}
\theoremstyle{plain}

\newtheorem{theorem}{Theorem}[section]
\newtheorem{proposition}[theorem]{Proposition}
\newtheorem{corollary}[theorem]{Corollary}
\newtheorem{lemma}[theorem]{Lemma}

\theoremstyle{definition}

\newtheorem{definition}[theorem]{Definition}

\theoremstyle{remark}

\newtheorem{remark}[theorem]{Remark}
\newtheorem{example}[theorem]{Example}

\newtheorem*{acknowledgement}{Acknowledgement}

\crefname{theorem}{Theorem}{Theorem}
\Crefname{theorem}{Theorem}{Theorem}
\crefname{proposition}{Proposition}{Proposition}
\Crefname{proposition}{Proposition}{Proposition}
\crefname{corollary}{Corollary}{Corollary}
\Crefname{corollary}{Corollary}{Corollary}
\crefname{lemma}{Lemma}{Lemma}
\Crefname{lemma}{Lemma}{Lemma}
\crefname{claim}{Claim}{Claim}
\Crefname{claim}{Claim}{Claim}
\crefname{fact}{Fact}{Fact}
\Crefname{fact}{Fact}{Fact}
\crefname{definition}{Definition}{Definition}
\Crefname{definition}{Definition}{Definition}
\crefname{notation}{Notation}{Notation}
\Crefname{notation}{Notation}{Notation}
\crefname{remark}{Remark}{Remark}
\Crefname{remark}{Remark}{Remark}
\crefname{example}{Example}{Example}
\Crefname{example}{Example}{Example}
\crefname{problem}{Problem}{Problem}
\Crefname{problem}{Problem}{Problem}
\crefname{answer}{Answer}{Answer}
\Crefname{answer}{Answer}{Answer}
\crefname{section}{Section}{Section}
\Crefname{section}{Section}{Section}
\crefname{assumption}{Assumption}{Assumption}
\Crefname{assumption}{Assumption}{Assumption}

\newcommand{\parenlr}[1]{\left(#1\right)}

\newcommand{\abracket}[1]{\langle#1\rangle}

\newcommand{\abs}[1]{\lvert#1\rvert}

\newcommand{\norm}[1]{\lVert#1\rVert}

\newcommand{\set}[2]{\{#1\mid#2\mbox{}\}}

\newcommand{\map}[3]{#1\colon#2\to#3}
\newcommand{\restr}[2]{#1|_{#2}}

\newcommand{\id}{\mathrm{id}}

\DeclareMathOperator{\End}{End}
\DeclareMathOperator{\Hom}{Hom}
\DeclareMathOperator{\Image}{Im} % \Im is already defined

\DeclareMathOperator{\SW}{SW}

\newcommand{\mbfS}{\mathbf{S}}

\newcommand{\mbfh}{\mathbf{h}}

\newcommand{\mcalD}{\mathcal{D}}
\newcommand{\mcalE}{\mathcal{E}}
\newcommand{\mcalF}{\mathcal{F}}

\newcommand{\mcalH}{\mathcal{H}}

\newcommand{\mcalM}{\mathcal{M}}

\newcommand{\mcalO}{\mathcal{O}}

\newcommand{\mcalU}{\mathcal{U}}
\newcommand{\mcalV}{\mathcal{V}}
\newcommand{\mcalW}{\mathcal{W}}

\newcommand{\mscrF}{\mathscr{F}}

\newcommand{\mscrV}{\mathscr{V}}
\newcommand{\mscrW}{\mathscr{W}}

\newcommand{\mfraks}{\mathfrak{s}}
\newcommand{\mfrakt}{\mathfrak{t}}
\newcommand{\mfraku}{\mathfrak{u}}

\newcommand{\mbbB}{\mathbb{B}}

\newcommand{\mbbX}{\mathbb{X}}

\newcommand{\Z}{\mathbb{Z}}

\newcommand{\R}{\mathbb{R}}
\newcommand{\C}{\mathbb{C}}
\newcommand{\HB}{\mathbb{H}}

\usepackage{empheq, comment}
\numberwithin{equation}{section}

\newcommand{\hplus}[1]{\mcalH^+(#1)}
\newcommand{\shplus}[1]{S(\hplus{#1})}

\DeclareMathOperator{\Int}{Int}

\DeclareMathOperator{\Harm}{Harm}
\DeclareMathOperator{\Diff}{Diff}

\DeclareMathOperator{\ind}{ind}

\DeclareMathOperator{\Fr}{Fr}

\DeclareMathOperator{\univ}{univ}
\DeclareMathOperator{\pt}{pt}

\DeclareMathOperator{\Alb}{Alb}
\DeclareMathOperator{\Pic}{Pic}

\newcommand{\Diffplus}{\Diff^+(X)}
\newcommand{\Diffspin}{\Diff^+(X, \mfraks)}
\newcommand{\Diffspiniso}{\Diff^+(X, [\mfraks])}

\newcommand{\Xunivspiniso}{\mbbX^{[\mfraks]}_{\univ}}

\newcommand{\tiota}{\tilde\iota}
\newcommand{\tf}{\tilde f}

\newcommand{\tmcalE}{\tilde\mcalE}

\newcommand{\pprime}{{\prime\prime}}

\begin{document}

\title{A gerbe-like construction in gauge theory II: the case of homology tori}
\author{Mitsuyoshi Adachi}

\begin{abstract}
  In the previous paper, the author showed that for a smooth family $X \to \mathbb{X} \to B$ of a homotopy $K3$ surface, the obstruction for the tangent bundle along the fibers $T_B \mathbb{X}$ to have a spin structure is canonically isomorphic to the obstruction for $\mathcal{H}^+(\mathbb{X})$, the vector bundle over $B$ consisting of self-dual harmonic 2-forms, to have a spin structure.

  In this paper, we show an analogous result for homology tori with odd determinant. The strategy for proof is similar to the case of homotopy $K3$ surfaces: take the determinant line bundle of the $K$-theoretic Seiberg--Witten invariant and construct an anti-linear $\mathbb{Z}/4$-action on it at the representative level.

  We also see that the anti-linear $\mathbb{Z}/4$-action possesses the information of the ordinary mod 2 Seiberg--Witten invariant. This recovers part of the result by Baraglia(2023) which computes the mod 2 Seiberg--Witten invariants for any closed spin 4-manifold.
\end{abstract}

\maketitle

\section{Introduction}

Gauge theory is a powerful tool to study the differential topology of 4-manifolds. In recent years it has been used to study families of 4-manifolds, and reveals properties of the diffeomorphism groups of 4-manifolds. The study of the diffeomorphism groups of 4-manifolds through gauge theory was initiated by Ruberman \cite{Ruberman-an-obstruction1998,Ruberman-polynomial-invarint-of-diffeo-1999}.

In the previous paper, the author has proved the following result. Let $X \to \mbbX \to B$ be an oriented smooth family of a homotopy $K3$ surface. Then there is a canonical isomorphism between (1) the $O(1)$-gerbe which represents the obstruction for $T_B \mbbX$ to admit a spin structure and (2) the $O(1)$-gerbe for $\hplus{\mbbX}$, the vector bundle whose fiber consists of self-dual harmonic 2-forms, to admit a spin structure. See \cite[Remark 6.1]{Adachi-gerbe-like-2026} for details. This  strengthens the theorem by Baraglia--Konno \cite{Baraglia--Konno-2022} which states that $w_2(\hplus{\mbbX})$ vanishes if $T_B \mbbX$ admits a spin structure. The purpose of this paper is to prove an analogous result in the case of homology 4-tori with odd determinant.

A homology 4-torus is an oriented closed 4-manifold with the same homology as that of the 4-torus. For a homology torus $X$, an invariant called the determinant is defined by
\[
  \det X = \abs{\abracket{\alpha_1 \cup \alpha_2 \cup \alpha_3 \cup \alpha_4}, [X]}.
\]
Here $\alpha_i$ ($i = 1, 2, 3, 4$) is a basis of $H^1(X; \Z)$, and $[X]$ is the fundamental class. When we consider a homology 4-torus $X$, the situation is different from that of the homotopy $K3$ surfaces since the first Betti number is nonzero. There are two main differences. One is that the diffeomorphisms which preserve the orientation do not necessarily preserve the isomorphism class of spin structures of $X$. Therefore, the obstruction for $X \to \mbbX \to B$ to admit a spin structure along the fibers can be considered in three stages.
\begin{enumarabicp}
  \item Fix a spin structure $\mfraks$ on $X$.
  \item Reduce the structure group of $\mbbX$ to the topological group consisting of diffeomorphisms which preserve the isomorphism class of $\mfraks$.
  \item Extend $\mfraks$ to a spin structure on $T_B \mbbX$.
\end{enumarabicp}
In this paper, we fix a spin structure $\mfraks$ on $X$ and assume that the obstruction of (2) already vanishes. Our task is to consider the relationship between the $O(1)$-gerbe appearing as the obstruction of (3) and the $O(1)$-gerbe as the obstruction for $\hplus{\mbbX}$ to admit a spin structure.

Another difference is that the isomorphism of $O(1)$-gerbes is formulated not over $B$, but over $\Alb(\mathcal{X})$, which is a certain torus bundle over $B$. For the definition of $\Alb(\mathcal{X})$, see \cref{def:Alb(X)}. The immediate reason for this is technical: it arises from the use of a canonical principal $U(1)$-bundle over the Seiberg-Witten moduli space in the proof. Whether the isomorphism of $O(1)$-gerbes holds over $B$ remains unknown to the author. However, if $X \to \mathcal{X} \to B$ admits a global section, it induces a global section of $\Alb(\mathcal{X}) \to B$. Therefore, given this auxiliary data, the $O(1)$-gerbes become canonically isomorphic over $B$. We will later discuss the non-triviality of the smooth isotopy class of the boundary Dehn twist for homology 4-tori with odd determinant. For this application, it suffices to consider only the case where $\mathcal{X}$ admits a global section.

\subsection{Main results}

As previously mentioned, the situation for homology 4-tori is more complex than that for homotopy $K3$ surfaces due to the non-vanishing first homology. We begin by organizing this with the introduction of some notation. Let $X$ be a homology torus with odd determinant, and let $\mathfrak{s}$ be a spin structure on it. In this paper, we formulate a spin structure in a way that does not depend on a Riemannian metric on $X$: specifically, a spin structure is a lift of the principal $GL^+(4, \mathbb{R})$-bundle $TX$ to a principal $\widetilde{GL}^+(4, \mathbb{R})$-bundle. Let $\mathrm{Diff}^+(X)$ be the topological group of orientation-preserving diffeomorphisms of $X$, and define
\begin{align*}
  \Diffspin & = \set{(f, \tf)}{f \in \Diffplus,                                                                                          \\
            & \text{$\map{\tf}{\mfraks}{\mfraks}$ is a $\widetilde{GL}^+(4, \R)$-equivariant lift of $\map{df}{\Fr^+(TX)}{\Fr^+(TX)}$}}.
\end{align*}
Since a spin structure on $X$ is not necessarily unique, the natural map
\[
  \Diffspin \to \Diffplus
\]
is not always surjective. Let $\Diffspiniso$ denote the image of this map. We then obtain the exact sequence
\[
  1 \to \mathbb{Z}/2 \to \Diffspin \to \Diffspiniso \to 1.
\]
The main theorem of this paper is as follows.

\begin{theorem}\label{thm:Diffspin times_Z/2 Spin(3)}
  Let $X$ be a homology 4-torus with odd determinant and $E\Diffspiniso \to B\Diffspiniso$ be the universal principal $\Diffspiniso$-bundle. Let $\Xunivspiniso$ be the $X$-bundle associated with $E\Diffspiniso$. Fix a continuous family of Riemannian metrics on $\Xunivspiniso$. Then there is a canonical principal $\Diffspin \times_{\Z/2} Spin(3)$-bundle over $\Alb(\Xunivspiniso)$ that lifts the principal $\Diffspiniso \times SO(3)$-bundle
  \[
    \pi_{\Alb(\Xunivspiniso)}^\ast(E\Diffspiniso \times_{B\Diffspiniso} \Fr^{SO}(\hplus{\Xunivspiniso})),
  \]
  where $\map{\pi_{\Alb(\Xunivspiniso)}}{\Alb(\Xunivspiniso)}{B\Diffspiniso}$ is the natural projection.
\end{theorem}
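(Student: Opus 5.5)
Following the abstract, the plan is to imitate the homotopy $K3$ argument of \cite{Adachi-gerbe-like-2026}. I will build the Seiberg--Witten determinant line bundle of the $K$-theoretic invariant and equip it, at the representative level, with an anti-linear $\Z/4$-action coming from the quaternionic structure of the spin Dirac operator.

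\emph{Step 1: the finite-dimensional approximation.} Over $B\Diffspiniso$ the spin structure $\mfraks$ only exists up to the $\Z/2$-ambiguity. So I will first pass to the family of configurations on which $\Diffspin$ acts, where $-1\in\Z/2$ acts by $-1$ on spinors. With the fixed family of metrics, the Seiberg--Witten map gives, fiberwise, a $Pin(2)$-equivariant map. The base manifold of this family is the Picard torus $H^1(X;\R)/H^1(X;\Z)$ of flat connections. After using the base point data, this torus becomes the Albanese-type torus bundle $\Alb(\Xunivspiniso)\to B\Diffspiniso$ of \cref{def:Alb(X)}. This is also why the result lives over $\Alb$ rather than $B$: the canonical $U(1)$-bundle on the moduli space (framed gauge group) requires a base point.

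\emph{Step 2: the family index.} A finite-dimensional approximation yields a $Pin(2)$-equivariant map $V\to V'\oplus \R^3$ of bundles over $\Alb(\Xunivspiniso)$. Here $V,V'$ are quaternionic bundles, namely the approximate kernel and cokernel of the Dirac family over the torus. The $\R^3$ factor is $\hplus{\Xunivspiniso}$ pulled back, on which $\Z/2$ in $Pin(2)$ acts by $-1$. The Dirac index over the Picard torus $T^4$ has virtual rank zero. The odd determinant assumption should force the index class to have nontrivial top Chern class, or equivalently force the $K$-theoretic degree to be odd; compare Baraglia's mod 2 computation. I will then take the determinant line $\det(V)\otimes\det(V')^*$ restricted to the $U(1)$-fixed structure. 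The quaternionic $j$ acts anti-linearly on it, with $j^2=(-1)^{\mathrm{rank}}$. Combined with the degree of the Bauer--Furuta map, this gives the anti-linear $\Z/4$-action.

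\emph{Step 3: the lift.} Following the $K3$ case, the key step is to identify the $\Z/4$-equivariant $K$-theoretic Seiberg--Witten class as a Thom class. This class relates the $\R^3$-bundle $\hplus{}$, with its $\Z/2$ sign action, to the line with its $\Z/4$ structure. The bundle of pairs (a spin lift $\tf$, an anti-linear $\Z/4$-generator compatible with the equivariant Thom class) then defines a principal $\Diffspin\times_{\Z/2}Spin(3)$-bundle covering the frame bundle of $\hplus{}$. The central $\Z/2$ is identified because $-1\in\Z/2\subset\Diffspin$ acts as $-1$ on spinors, and this matches $-1\in Spin(3)$ through the action on the Clifford module of $\hplus{}$. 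Independence of choices (approximation, stabilization) will be shown by the usual homotopy/stabilization argument. This yields canonicity.

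\emph{Main obstacle.} The hard part is Step 2: showing that the odd determinant makes the determinant-line $\Z/4$-structure nondegenerate and canonical over $\Alb$. This plays the role that the nonvanishing $K$-theoretic invariant played for $K3$. I would first try to compute the index of the Dirac family over $T^4$ via the families index theorem. The determinant should enter through $c_2$ of the index bundle, which is $\pm\det X$. If that fails, I would fall back on Baraglia's mod 2 computation.
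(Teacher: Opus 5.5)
Your proposal has a genuine gap in Steps~2--3, exactly where the anti-linear $\Z/4$-action and the $Spin(3)$-structure are supposed to appear.

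\textbf{The line lives over the wrong space.} You place $\det(V)\otimes\det(V')^*$ over the base ($\Alb$, or $\Alb\times_B\Pic$). There $j$ covers the identity, or the antipodal map of $\Pic$, which has fixed points. At a point fixed by the base involution, an anti-linear automorphism $J$ of the fiber line satisfies $J(ze)=\bar z a e$, hence $J^2=\abs{a}^2>0$. So such a line can never carry a generator with $J^2=-1$. Consistently, a quaternionic bundle has even complex rank, so $\Lambda^{\mathrm{top}}j$ squares to $+1$. ``Combining with the degree of the Bauer--Furuta map'' cannot change this.

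The missing idea is where the line lives:
\begin{itemize}
  \item The paper perturbs the Seiberg--Witten map by the tautological section of $\hplus{\mbbX}$ pulled back to the sphere bundle $\shplus{\mbbX}$; this is what excludes reducibles.
  \item It then divides by $U(1)$ and takes the $K$-theoretic degree as a family of Dirac operators $\mcalD+t\mbfh$ \emph{parametrized by} $\shplus{\mbbX}$.
  \item Since $j$ acts by $-1$ on $\hplus{\mbbX}$, the induced $\tiota$ on $L=\det(\mcalD+t\mbfh)$ covers the free antipodal map of each fiber $S^2$. By the argument of \cref{prop:top restr for anti-lin map}, $\tiota^2=(-1)^{c_1(L)}$.
\end{itemize}
The odd determinant enters precisely here. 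By the families index theorem, $c_1(L)$ on $S(\hplus{\mbbX_b})$ is governed by the class $c(\mscrF_b)$ of \cref{rem:wcn}. That class is an odd multiple of the generator by the families wall-crossing formula, which forces $\tiota^2=-1$. Your expectation that $\det X$ enters through $c_2$ of the Dirac index over $T^4$ is compatible with this, since it is the Segre class in the wall-crossing formula. But it must be read as the degree of a line bundle over $S^2$, not as a degree of the Bauer--Furuta map.

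\textbf{No bridge from the line to $Spin(3)$.} Step~3 never explains why pairs (spin lift, anti-linear generator ``compatible with an equivariant Thom class'') form a $Spin(3)$-torsor over $\Fr^{SO}(\hplus{\mbbX})$. Identifying a $\Z/4$-equivariant $K$-class as a Thom class is a statement about classes. At best it yields existence of an orientation, not the canonical representative-level object needed for gluing.

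The paper instead uses the following fact. For an oriented rank-$3$ bundle $H^+$, a triple $(H^+,L,\tiota)$ with $c_1(L_b)$ odd and $\tiota^2=-1$ determines a spin structure on $H^+$, canonically and functorially \cite[Section 3]{Adachi-gerbe-like-2026}. This is where $b^+=3$ is essential. It also makes your matching of central elements precise. The element $-1\in\Diffspin$ acts on spinors as $j^2$, hence on $L$ as $\tiota^2=-1$, which is $-1\in Spin(3)$. That is what lets the spin structures built from local lifts $\tilde\mcalE$ over contractible opens assemble into a $\Diffspin\times_{\Z/2}Spin(3)$-bundle.

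\textbf{Smaller omissions.}
\begin{itemize}
  \item Integrating over the Picard torus requires choosing a spin structure on $H^1_\R$, with an anti-linear $\tau$ on its spinors covering $-\id$. You then must prove independence of that choice: its $-1$ acts on $L$ by $(-1)^{\text{virtual rank}}=1$ (\cref{prop:spin for H^1 is not relevant}).
  \item $V_\C$ and $W_\C$ are not quaternionic bundles, because $j$ covers the antipodal map of $\Pic$. So the $Pin(2)$-equivariant Kuiper trivialization has to be built cell by cell (\cref{lem:Kuiper}).
  \item $\Alb$ is not the Picard torus with a base point. It is the torsor of splittings $\Harm(\mbbX_b,U(1))\cong H^1(\mbbX_b;2\pi i\Z)\times U(1)$, used to divide canonically by harmonic gauge transformations. $\Pic$ remains as the parameter space that is integrated out.
\end{itemize}
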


For the definition of $\Alb(\Xunivspiniso)$, see \cref{def:Alb(X)}. The above theorem gives an equality on characteristic classes. Let
\[
  \alpha(\Xunivspiniso, \mfraks) \in H^2(B\Diffspiniso; \Z/2)
\]
be the primary obstruction for $E\Diffspiniso$ to have a lift to a principal $\Diffspin$-bundle. The vanishing of $\alpha(\Xunivspiniso, \mfraks)$ is equivalent to $E\Diffspiniso$ having a lift to a principal $\Diffspin$-bundle. We also have a characteristic class
\[
  w_2(\hplus{\Xunivspiniso}) \in H^2(B\Diffspiniso; \Z/2).
\]
Since the isomorphism class of $\hplus{\Xunivspiniso}$ is independent of the choice of the family of the Riemannian metrics on $\Xunivspiniso$, $w_2(\hplus{\Xunivspiniso})$ is also independent of it.

\begin{corollary}\label{cor:alpha = w_2}
  Let $X$ be a homology 4-torus with odd determinant. Fix a continuous family of Riemannian metrics on $\Xunivspiniso$. Then we have
  \[
    \pi_{\Alb(\Xunivspiniso)}^\ast \alpha(\Xunivspiniso, \mfraks) = \pi_{\Alb(\Xunivspiniso)}^\ast w_2(\hplus{\Xunivspiniso}) \in H^2(\Alb(\Xunivspiniso); \Z/2).
  \]
\end{corollary}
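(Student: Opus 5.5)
The corollary should follow from \cref{thm:Diffspin times_Z/2 Spin(3)} by comparing obstruction classes attached to central extensions. Write $G = \Diffspiniso$, $\tilde G = \Diffspin$, and consider the group $\tilde G \times_{\Z/2} Spin(3)$. It is a central extension
\[
  1 \to \Z/2 \to \tilde G \times_{\Z/2} Spin(3) \to G \times SO(3) \to 1,
\]
where the $\Z/2$ is the diagonal copy of the two central subgroups. Let $P = \pi_{\Alb(\Xunivspiniso)}^\ast\bigl(E G \times_{B G} \Fr^{SO}(\hplus{\Xunivspiniso})\bigr)$ be the pulled-back principal $G \times SO(3)$-bundle. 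Its obstruction class for lifting along this extension is computed in two stages.

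First, the extension class of $\tilde G \times_{\Z/2} Spin(3)$ in $H^2(B(G \times SO(3)); \Z/2)$ is the sum $e_1 \times 1 + 1 \times e_2$. Here $e_1$ is the class of $1 \to \Z/2 \to \tilde G \to G \to 1$, and $e_2 = w_2$ is the class of $Spin(3) \to SO(3)$. This holds because $\tilde G \times_{\Z/2} Spin(3)$ is the Baer sum of the two pulled-back extensions, and Baer sum corresponds to addition in $H^2(-;\Z/2)$. Concretely, one takes the fibre product of the two extensions over $G \times SO(3)$ and quotients by the antidiagonal $\Z/2$. I would check this on classifying spaces: the map $B(\tilde G \times_{\Z/2} Spin(3)) \to B(G\times SO(3))$ is the homotopy fibre of the map to $K(\Z/2,2)$ given by the sum of the two classes.

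Second, pull this class back along the classifying map of $P$. The projections of $P$ to $G$ and to $SO(3)$ are $\pi^\ast EG$ and the frame bundle of $\pi^\ast\hplus{\Xunivspiniso}$. By naturality, the obstruction for $P$ to lift is therefore
\[
  \pi_{\Alb(\Xunivspiniso)}^\ast \alpha(\Xunivspiniso,\mfraks) + \pi_{\Alb(\Xunivspiniso)}^\ast w_2(\hplus{\Xunivspiniso}).
\]
We use that $\alpha$ is, by definition, the primary obstruction, i.e.\ the pullback of $e_1$ along the classifying map $B\Diffspiniso \to BG$. For a discrete-$\Z/2$ central extension of topological groups, this primary obstruction is a complete obstruction living in $H^2(-;\Z/2)$.

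Third, \cref{thm:Diffspin times_Z/2 Spin(3)} provides a lift of $P$, so this sum vanishes. Since we are in characteristic $2$, this gives exactly the claimed equality.

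The step I expect to need the most care is the first: identifying the obstruction for $\tilde G \times_{\Z/2} Spin(3)$ with the sum of the two obstructions when $G$ is an infinite-dimensional topological group rather than a Lie group. I would handle it with the fibration $B\Z/2 \to B\tilde H \to BH$ for a central extension $\tilde H \to H$ with kernel $\Z/2$. Such a fibration is principal, classified by a map $BH \to K(\Z/2,2)$, and the classifying map is additive under Baer sum; a Milnor-type model of $B$ works for topological groups. Everything else is formal naturality.
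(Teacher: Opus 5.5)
Your proposal is correct and is the argument the paper intends. The paper gives no separate proof and only remarks that \cref{thm:Diffspin times_Z/2 Spin(3)} yields the equality: the obstruction to lifting along $\Diffspin \times_{\Z/2} Spin(3) \to \Diffspiniso \times SO(3)$ is the Baer-sum class $\pi^\ast\alpha + \pi^\ast w_2$, which vanishes because the theorem supplies a lift. Only the easy direction is used (existence of a lift forces the pulled-back class to vanish), so you do not need the completeness of the $\Z/2$-obstruction that you invoke.
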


\cref{thm:Diffspin times_Z/2 Spin(3)} is the immediate corollary of the following theorem.

\begin{theorem}\label{thm:Spin(3) from Diffspin}
  Let $X$ be a homology 4-torus with odd determinant and let $\mcalE \to B$ be a principal $\Diffspiniso$-bundle over a contractible paracompact Hausdorff space. Fix a continuous family of Riemannian metrics on $\mbbX = \mcalE \times_{\Diffspiniso} X$. Assume that a lift $\tilde{\mcalE}$ of $\mcalE$ to a principal $\Diffspin$-bundle is given. Then a spin structure $\mfrakt$ on $\pi_{\Alb(\mbbX)}^\ast \hplus{\mbbX}$ is canonically constructed, where $\map{\pi_{\Alb(\mbbX)}}{\Alb(\mbbX)}{B}$ is the natural projection. Furthermore, the correspondence between $\tilde \mcalE$ and $\mfrakt$ is functorial. Under this functoriality, the isomorphisms
  \[
    \map{+1, -1}{\tilde \mcalE}{\tilde \mcalE}
  \]
  corresponds to
  \[
    \map{+1, -1}{\mfrakt}{\mfrakt}
  \]
  respectively.
\end{theorem}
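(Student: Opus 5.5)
We follow the strategy of the K3 case in \cite{Adachi-gerbe-like-2026}: extract $\mfrakt$ from the determinant line of the $K$-theoretic (Pin(2)-equivariant) families Seiberg--Witten invariant. The family over $B$ is spin with $b_1=4$. The Seiberg--Witten moduli space for a fixed metric has an unparametrized component given by the Picard torus $\Pic(X)=H^1(X;\R)/H^1(X;\Z)$ of flat connections. Over $B$ these fit into a torus bundle, and via the Albanese/Picard duality this is where $\Alb(\mbbX)$ enters.

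\textbf{Step 1: the index bundle.} Over each point of $\Alb(\mbbX)$ we have the family of spin Dirac operators $D_A$ twisted by flat $U(1)$-connections $A$ on $X$. Since $\sigma(X)=0$, each $D_A$ has quaternionic index $0$. The lift $\tmcalE$ is exactly the data needed to make the spinor bundles form an honest family over $B$, with $\pm1\in\Z/2$ acting by $\pm1$ on spinors. Using $j$ and the canonical principal $U(1)$-bundle over the moduli space (the based gauge fixing at a point, which is why we pass to $\Alb$), we form a finite-dimensional approximation of the monopole map over $\Alb(\mbbX)$. This is a $\mathrm{Pin}(2)$-equivariant map $V\oplus \R^a \to W\oplus \R^a\oplus \pi^\ast\hplus{\mbbX}$, with $V,W$ quaternionic bundles.

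\textbf{Step 2: the anti-linear $\Z/4$-action.} The odd determinant hypothesis enters here. By the known computation for homology tori, the families index of $D_A$ over the Picard torus has a virtual quaternionic line whose first Chern class pairs with $[\Pic]$ to $\pm\det X$; oddness makes the relevant mod 2 degree nonzero. Following the paper's outline, we take the complex determinant line $\det(V-W)$ of the $K$-theoretic invariant and the action of $j$. Since $j^2=-1$ acts anti-linearly, it gives an anti-linear $\Z/4$-action on this line at the representative level. This action must be shown to be independent of the choices of finite-dimensional approximation up to canonical homotopy.

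\textbf{Step 3: from the line to a spin structure on $\pi^\ast\hplus{\mbbX}$.} Equivariant $K$-theory Thom isomorphism/Bott periodicity relates the determinant data of $V-W$ to the real bundle $\pi^\ast\hplus{\mbbX}$, which is the cokernel of the real part. The nontriviality of the invariant (odd determinant) forces the following: a trivialization of the anti-linear $\Z/4$-structure, i.e. a $\mathrm{Pin}(2)$-compatible reduction, is the same as a lift of the $SO(3)$-frame bundle of $\pi^\ast\hplus{\mbbX}$ to $Spin(3)=Sp(1)$. The identification uses that $Sp(1)$ acts on quaternionic lines, so $\hplus{}$ of rank $3$ becomes $\mathrm{Im}\,\HB$ for the resulting $Sp(1)$-bundle.

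\textbf{Step 4: functoriality.} Every construction is natural in isomorphisms of $\tmcalE$. The deck transformation $-1$ acts by $-1$ on spinors, hence by $-1\in Sp(1)$ on the constructed $Sp(1)$-bundle, and so gives $-1$ on $\mfrakt$; $+1$ is obvious.

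The main obstacle is Step 3: showing canonically, rather than merely at the level of characteristic classes, that the $\Z/4$-structure coming from the Seiberg--Witten map yields an $Sp(1)$-reduction of $\pi^\ast\hplus{\mbbX}$, and that this uses the odd determinant. I would first treat a single fibre with $B$ a point, where $\hplus{}$ is trivial. I would then pin down the degree computation on $\Pic(X)$, and finally globalize using contractibility of $B$ to fix homotopies.
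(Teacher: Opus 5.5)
\textbf{There is a genuine gap.} Your guiding idea, the determinant line of the $K$-theoretic invariant together with the $j$-action, is the paper's. But the proposal is missing the mechanism that turns it into a spin structure, and Steps 2--4 would fail as written.

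\textbf{Where the line bundle lives.} In the paper one perturbs by the tautological section of $\shplus{\mbbX}\times_B\hplus{\mbbX}$ to remove reducibles. One then divides the finite-dimensional approximation by $U(1)$, so the complex part becomes $S(V_\C)/U(1)\to\Pic$ with its line bundle $\mcalO(1)$. Finally one pushes forward along \emph{all} fibre directions, including $\Pic$, to obtain a Fredholm family $\mcalD+t\mbfh$ parametrized by the sphere bundle $S(H^+)$. Its determinant $L$ is a line bundle over this $S^2$-bundle, and $j$ induces an anti-linear $\tiota$ covering the antipodal map. A triple $(H^+,L,\tiota)$ with $c_1(L)$ odd on each fibre and $\tiota^2=-1$ canonically \emph{is} a spin structure on the rank-$3$ bundle, by the classification of triples in \cite{Adachi-gerbe-like-2026}, Section 3. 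No ``trivialization of the $\Z/4$-structure'' and no Thom isomorphism relating $\det(V-W)$ to $\hplus{\mbbX}$ is involved. Your $\det(V-W)$ lives over $\Alb(\mbbX)$ (or $\Pic$) and never sees $S(H^+)$, so it cannot carry a spin structure on $\pi^\ast\hplus{\mbbX}$.

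\textbf{Why $\tiota^2=-1$.} This does not follow from $j^2=-1$. By the argument of \cref{prop:top restr for anti-lin map}, $\tiota^2=(-1)^{c_1(L)}$. On a fibre, $c_1(L)$ is the families Seiberg--Witten invariant of the tautological $S^2$-family, which the families wall-crossing formula identifies with the degree-$4$ Segre class $s_2$ of the Dirac family evaluated on $\Pic\cong T^4$. That is exactly where odd determinant enters. Your ``first Chern class paired with $[\Pic]$'' is dimensionally impossible, and the index there has rank $0$, not $1$.

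\textbf{Step 4 fails for the same reason.} If $-1\in\Z/2$ acted as the scalar $-1$ on a complex virtual bundle of rank $0$, as in your Step 1, it would induce $(-1)^0=+1$ on the determinant. The paper uses exactly this to show that $\pm1$ on a spin structure $\mfraku$ of $H^1_\R$ act trivially (\cref{prop:spin for H^1 is not relevant}). What makes $-1$ on $\tmcalE$ go to $-1$ on $\mfrakt$ is that it coincides with $j^2\in Pin(2)$. After the $U(1)$-quotient, $\tau^2$ is no longer a scalar: it has opposite signs on the $\underline{\C}$- and $\mcalO(1)$-summands. The induced map on $L$ is $\tiota^2$, which equals $-1$ only because of the oddness above.

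\textbf{Other missing pieces.}
\begin{itemize}
\item You do not address the $K$-orientation needed to push forward along $\Pic$. Since $H^1_\R$ has no canonical spin structure, the paper chooses $\mfraku$ and builds a $j$-compatible anti-linear map on its spinors (\cref{prop:spinor with anti-lin involution}). It then proves independence of $\mfraku$ by the rank-$0$ argument just mentioned.
\item $V_\C$ and $W_\C$ are not quaternionic: $j$ covers $-1$ on $\Pic$, which is why the equivariant Kuiper trivialization has to be built cell by cell.
\item Independence from the choice of finite-dimensional approximation and auxiliary data is needed to glue over a cover of $B$; you mention this but do not carry it out.
\end{itemize}
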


The meaning of the functoriality is as follows. Let $\mcalE \to B$, $\tilde \mcalE$ be as in \cref{thm:Spin(3) from Diffspin} and $\mcalE^\prime \to B^\prime$, $\tilde \mcalE^\prime$ be another pair of such data. Fix continuous families of Riemannian metrics on $\mbbX$ and $\mbbX^\prime = \mcalE^\prime \times_{\Diffspiniso} X$. Let $\mfrakt$, $\mfrakt^\prime$ be the spin structures on $\pi_{\Alb(\mbbX)}^\ast \hplus{\mbbX}$ and $\pi_{\Alb(\mbbX)}^{\prime \ast} \hplus{\mbbX^\prime}$, where $\map{\pi_{\Alb(\mbbX)}^\prime}{\Alb(\mbbX^\prime)}{B^\prime}$ is the natural projection. Assume that we have continuous maps
\[
  \map{f}{B^\prime}{B},\ \map{\tilde f}{\tilde \mcalE^\prime}{\tilde \mcalE},
\]
where $\tilde f$ is a $\Diffspin$-equivariant map covering $f$, and the induced map from $\mbbX^\prime$ to $\mbbX$ preserves the families of Riemannian metrics. Then we can construct a canonical $Spin(3)$-equivariant map
\[
  \map{f_\ast}{\mfrakt^\prime}{\mfrakt}
\]
which covers the map from $\pi_{\Alb(\mbbX)}^\ast \hplus{\mbbX^\prime}$ to $\hplus{\mbbX}$ induced by $\tilde f$.

The isomorphisms $\map{\pm 1}{\tilde \mcalE}{\tilde \mcalE}$ come from $\pm 1 \in \Diffspin$ and $\map{\pm 1}{\mfrakt}{\mfrakt}$ come from $\pm 1 \in Spin(3)$: thus they cover the identity map on $\mcalE$ or $\pi_{\Alb(\mbbX)}^\ast \hplus{\mbbX}$ respectively.

As an application of \cref{cor:alpha = w_2}, we give an alternative proof of the following result by Qiu \cite{Qiu-Dehn2025-arXiv}.

\begin{theorem}\label{thm:bd Dehn twist}
  Let $X$ be a homology 4-torus with odd determinant. Then the boundary Dehn twist of $X^\prime = X \setminus \Int D^4$ is not isotopic to the identity relative to $\partial X^\prime$.
\end{theorem}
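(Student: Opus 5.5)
The plan is to deduce this from \cref{cor:alpha = w_2} by a family argument over $S^1$-type base, following the strategy used by Baraglia--Konno and Kronheimer--Mrowka for the $K3$ case. Suppose, for contradiction, that the boundary Dehn twist $\delta$ of $X^\prime$ is isotopic to the identity rel $\partial X^\prime$. An isotopy is a path in $\Diff(X^\prime, \partial X^\prime)$ from $\delta$ to $\id$. Since $\delta$ is the image of the generator of $\pi_1(SO(4))$ under the map $SO(4) \to \Diff(X^\prime,\partial)$ that rotates a collar, the path yields a nullhomotopy of this loop. Clutching, we get a family over $S^2$ (or over $B$ built from $\pi_1 SO(4)$) as follows. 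Consider $X = X^\prime \cup D^4$. The rotation action of $SO(4)$ on $D^4$, extended by the collar twist, gives a loop in $\Diff^+(X)$ based at $\id$. The isotopy then provides a null-homotopy of this loop, i.e.\ a map $D^2 \to \Diff^+(X)$ with boundary the loop. Combining with the standard disc $D^2 \to SO(4)$-action on the ball, we obtain a family $\mbbX \to S^2$. Concretely, I would take $\mbbX$ to be the fiberwise connected sum of the trivial family $X^\prime \times S^2$ with the sphere bundle coming from the nontrivial $SO(4)$-bundle over $S^2$ (generator of $\pi_1 SO(4) \cong \Z/2$ on the relevant factor). The isotopy identifies this with a family in which the structure group reduces to $\Diff(X^\prime,\partial)$ glued trivially to $D^4$.

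The key steps are as follows.

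First, this family admits a global section: any point in the $X^\prime$ part, held fixed since the diffeomorphisms fix the boundary collar. Moreover, all monodromies are isotopic to the identity, so the structure group reduces to $\Diffspiniso$. Hence the pullback along the section lifts the classifying map $S^2 \to B\Diffspiniso$ to $\Alb$, and \cref{cor:alpha = w_2} applies to this map.

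Second, compute $w_2(\hplus{\mbbX})$. Since $\hplus{}$ depends only on the action on $H^2$, which is trivial, and the family is built from diffeomorphisms supported near the ball, $\hplus{\mbbX}$ is trivial over $S^2$. So $w_2 = 0$. More precisely, via the isotopy we know $\hplus{}$ is pulled back from the trivial family.

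Third, compute $\alpha(\mbbX,\mfraks)$. Using the description as the family from a clutching by $\pi_1 SO(4)$, the obstruction to lifting to $\Diffspin$ is exactly whether the loop of rotations lifts to a loop in $Spin(4)$. It does not, since the generator of $\pi_1 SO(4)$ is the nontrivial element not lifting to $Spin(4)$. Equivalently, the lift $\tilde\delta$ of $\delta$ to $\Diffspin$ obtained via the spin rotation is $-1$, while the lift via the isotopy is $+1$. The extension class over $S^2$ is therefore nonzero, so $\alpha \neq 0$, and this persists after pullback to $\Alb$ because of the section.

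The contradiction $0 = w_2 \neq \alpha$ finishes the proof. The main obstacle is the third step: making rigorous that the isotopy produces a family whose spin-lift obstruction is the nontrivial class. This means carefully identifying the two lifts of $\delta$ (one from the rotation path through $\widetilde{GL}^+$, one from the isotopy, which forces the trivial lift) and showing they differ by $-1 \in \Z/2$. I would handle this by working with the $\Z/2$ extension $1 \to \Z/2 \to \Diffspin \to \Diffspiniso \to 1$ directly. The clutching class in $H^2(S^2;\Z/2)$ equals the obstruction to lifting the loop, and that is exactly the lift of the $2\pi$ rotation, which acts by $-1$ on the spinors.
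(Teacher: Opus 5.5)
Your overall route is the paper's. The paper uses Lin's criterion (\cref{prop:Dehn twist vs commutator} with $g=0$) to reduce to showing $s^\ast w_2(T_{S^2}\mbbX)=0$ for every $X$-bundle over $S^2$ with a section $s$. It then uses the section to lift the classifying map to $\Alb(\Xunivspiniso)$, applies \cref{cor:alpha = w_2}, and kills $w_2(\hplus{\mbbX})$ because the $H^2$-bundle over $S^2$ is trivial. That is exactly your Steps 1 and 2.

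What you do differently is re-derive the needed direction of Lin's criterion by hand, and there the family in your first paragraph is misdescribed:
\begin{itemize}
  \item The ball rotation along the generator $\gamma$ of $\pi_1 SO(4)$, interpolated on the collar, is a \emph{path} in $\Diffplus$ from $\id$ to $\delta$ (extended by the identity over $D^4$), not a loop.
  \item The isotopy is a second path from $\delta$ back to $\id$, not a null-homotopy. A null-homotopy $D^2\to\Diffplus$ of a clutching loop would make the bundle over $S^2$ trivial.
  \item $X^\prime\times S^2$ cannot simply be glued to the disc bundle of the nontrivial rank-$4$ bundle $N$, since the boundary sphere bundles $S^3\times S^2$ and $S(N)$ do not match. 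Producing an $X^\prime$-bundle with boundary $S(N)$ is exactly what the isotopy is for.
  \item Most seriously, suppose the structure group really reduced to $\Diff(X^\prime,\partial X^\prime)$ extended by the identity over $D^4$. Then every structure map would have a canonical lift to $\Diffspin$ (the one acting trivially on $\mfraks|_{D^4}$). So $\alpha$ would vanish and your Step 3 would be false.
\end{itemize}

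The right family, which your final paragraph implicitly uses, is the one clutched by the loop $\ell$ in $\Diffspiniso$ obtained by following the rotation path with the reversed isotopy. Each $\ell_t$ fixes the centre $x_0$ of $D^4$, which gives the section. The loop $t\mapsto d\ell_t(x_0)$ is homotopic to $\gamma$. Hence the lift of $\ell$ to $\Diffspin$ starting at $+1$ ends at $-1$, i.e.\ $\alpha(\mbbX,\mfraks)\neq 0$.

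More directly, $s^\ast T_{S^2}\mbbX\cong N$ with $w_2(N)\neq 0$, so $T_{S^2}\mbbX$ is not spin, whereas $\alpha=0$ would equip it with a spin structure. This is precisely the form in which the paper uses Lin's criterion, and it spares you the bookkeeping of comparing the two lifts of $\delta$.

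With this family in place, your Steps 1--3 and the contradiction $0=w_2(\hplus{\mbbX})=\alpha\neq 0$ go through.
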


The original result of Qiu \cite{Qiu-Dehn2025-arXiv} is that the Dehn twist on the neck of the connected sum of two homology 4-tori with odd determinant is not smoothly isotopic to the identity. His strategy is based on the idea of Kronheimer--Mrowka \cite{Kronheimer--Mrowka-Dehn-twist-K3-2020} in the case of homotopy $K3$ surfaces. He computes the $Pin(2)$-equivariant Bauer--Furuta invariant of the mapping torus of the Dehn twist. On the other hand, for the case of the boundary Dehn twist of a homotopy $K3$ surface, Kronheimer--Mrowka \cite{Kronheimer--Mrowka-Dehn-twist-K3-2020}  presented a different approach. This is based on a result due to Baraglia--Konno \cite{Baraglia--Konno-2022} that if $w_2(\hplus{\mbbX}) = 0$, then the tangent bundle along the fibers of $\mbbX$ admits a spin structure. Our approach to \cref{thm:bd Dehn twist} is based on the latter argument.

\subsection{Outline of the proof}

The main part of the proof is to construct a spin structure on $\hplus{\mbbX}$, given that $X \to \mbbX \to B$ is equipped with a lift of the structure group to $\Diffspin$. The essence of the proof is the same as that in \cite{Adachi-gerbe-like-2026}: roughly speaking, take the $K$-theoretic degree of the finite-dimensional approximation of the Seiberg--Witten map modded out by $U(1)$ at the representative level, and then take its determinant line bundle. As discussed in \cite[Section 3]{Adachi-gerbe-like-2026}, such a complex line bundle over $\shplus{\mbbX}$ with an anti-linear isomorphism on it gives a spin structure on $\hplus{\mbbX}$. (Here we use $b^+(X) = 3$.) The anti-linear isomorphism is derived from the $j$-action coming from the $Pin(2)$-equivariance of the Seiberg--Witten map. Then we study how the self-isomorphism of $\mbbX$ acts on the spin structure on $\hplus{\mbbX}$. This analysis gives the proof of \cref{thm:Spin(3) from Diffspin}. The construction should be ``at the representative level'', not as an element of the $K$-group, in order to discuss canonical construction.

One main difference from the case of homotopy $K3$ surface is that we have to consider how to equip $H^1(\mbbX; \R)$, the vector bundle whose fiber at $b$ is $H^1(\mbbX_b; \R)$, with a spin structure. This is necessary since we want to take a $K$-theoretic degree. Actually, there is no canonical way to give it a spin structure. To remedy this, we arbitrarily choose a spin structure on it and prove that the resulting spin structure on $\hplus{\mbbX}$ is independent of the choice. More precisely, if we have another choice of the spin structure on $H^1(\mbbX; \R)$, then we have a canonical isomorphism between the spin structures on $\hplus{\mbbX}$.

As a byproduct of the construction described above, we can determine the mod 2 Seiberg--Witten invariants of closed spin 4-manifolds with $b^+ \geq 3$. We discuss it in \cref{sec:mod 2 SW}. This recovers part of the result by Baraglia \cite{baraglia-mod-2023-arxiv}, in which the mod 2 Seiberg--Witten invariants of closed spin 4-manifolds is completely determined even in the case of $b^+ = 1, 2$. Prior to his work, there are contributions by Morgan--Szab\'{o} \cite{morgan1996seiberg} in the case of homotopy $K3$ surfaces, Ruberman--Strle \cite{Ruberman--Strle-mod2-SW-homology-tori-2000} in the case of homology tori, and Bauer \cite{Bauer-almost-complex-2008} and Li \cite{Li-quaternionic-bundles-2006} in other special cases. Our method here is summarized as follows. The existence of the complex line bundle on $\shplus{\mbbX}$ with some anti-linear isomorphism gives a topological constraint expressed as an equality. (See \cref{prop:top restr for anti-lin map}.) One side of the equality can be computed using the mod 2 Seiberg--Witten invariant, and this leads to the determination of it.

It is natural to ask whether analogous statements to \cref{thm:Diffspin times_Z/2 Spin(3)} hold for closed spin 4-manifolds other than homotopy $K3$ surfaces and homology 4-tori with odd determinant. At least, if we use the above technique (taking the determinant line bundle of the $K$-theoretic degree of the finite-dimensional approximation of the Seiberg--Witten map deformed and modded out by $U(1$)), there are no other examples. (Strictly speaking, the necessary assumption is the restriction for the Betti numbers.) The reason is as follows. First, a spin structure can be described using an appropriate complex vector bundle over a sphere bundle and an anti-linear isomorphism only in the case of rank 3 vector bundles. This forces us to consider the case $b^+ = 3$. Furthermore, the success of our construction necessitates the mod 2 Seiberg--Witten invariant being equal to 1. This imposes constraints on the Betti numbers.

\subsection{Organization of the paper}

In \cref{sec:FDA}, we describe the finite-dimensional approximations of the Seiberg--Witten map for the closed spin 4-manifold with possibly $b_1 > 0$. In \cref{sec:const of spin str}, we construct a spin structure on $\hplus{\mbbX}$, given that $X \to \mbbX \to B$ is equipped with a lift of the structure group to $\Diffspin$. In \cref{sec:proof of the main theorems}, we prove the main theorems. In \cref{sec:mod 2 SW}, the mod 2 Seiberg--Witten invariant is computed in the case of a closed spin 4-manifold with $b^+ \geq 3$.

\begin{acknowledgement}
  The author would like to express his deep gratitude to Mikio Furuta for helpful suggestions and continuous encouragements. The author also thanks Nobuhiro Nakamura for encouraging the author to think the homology tori case. The author would also like to express his appreciation to Hokuto Konno for helpful comments and continuous encouragements.
\end{acknowledgement}

\section{Finite-dimensional approximations for $b_1 > 0$}\label{sec:FDA}

In this section, we describe the finite-dimensional approximations of the families Seiberg--Witten map for $b_1 > 0$. The basic idea is the same as \cite[Section 4.1]{Adachi-gerbe-like-2026}, but the argument has to be modified since we need one additional data represented by the element of $\Alb(\mbbX)$ if the 4-manifold has positive first Betti number. For a reference, see Bauer--Furuta \cite{Bauer--Furuta-2004} for example.

After mentioning the construction of the finite-dimensional approximations, we abstract the property of them essential for our purpose.

\subsection{The Seiberg--Witten map for $b_1 > 0$}\label{subsec:SW map B_1 > 0}

The goal of this subsection is to construct a finite-dimensional approximation of the families Seiberg--Witten map. Let $X$ be a closed oriented 4-manifold and $\mfraks$ be a spin structure on $X$. In this subsection, $X$ need not be the homology 4-torus with odd determinant. Let $\mcalE \to B$ be a principal $\Diffspiniso$-bundle over a paracompact Hausdorff space $B$. Later we assume that $B$ is also contractible. Fix a continuous family of Riemannian metrics on $\mbbX = \mcalE \times_{\Diffspiniso} X$, and assume that a lift $\tilde \mcalE$ of $\mcalE$ to a principal $\Diffspin$-bundle is given. Let
\begin{align*}
  \mscrV & = \coprod_{b \in B} \Omega^1(\mbbX_b; i\R) \times \Gamma(S^+_b),                                 \\
  \mscrW & = \coprod_{b \in B} \Omega^0_0(\mbbX_b; i\R) \times \Omega^+(\mbbX_b; i\R) \times \Gamma(S^-_b),
\end{align*}
where $S^\pm_b$ are spinor bundles and $\Omega^0_0(\mbbX_b; i\R)$ is the $L^2$-complement of $H^0(\mbbX_b; i\R)$. For $(a, \phi) \in \mscrV_b$, let
\[
  D(a, \phi) = (d^\ast a, d^+ a, D^+ \phi),
\]
where $D^+$ denotes the Dirac operator determined by the canonical reference connection. Also, define
\[
  Q(a, \phi) = (0, -q(\phi), a \cdot \phi),
\]
where $q(\phi) \in \Omega^+(\mbbX_b; i\R)$ is defined by
\[
  q(\phi) \cdot \psi = \abracket{\psi, \phi} \phi - \frac{1}{2} \abs{\phi}^2 \psi.
\]
The families Seiberg--Witten map is
\[
  \map{D + Q}{\mscrV}{\mscrW}.
\]
As in \cite{Adachi-gerbe-like-2026}, we introduce the family of perturbations of the families Seiberg--Witten map in order to avoid reducible solutions. Let $\shplus{\mbbX}$ be the sphere bundle of $\hplus{\mbbX}$. Then
\[
  \shplus{\mbbX} \times_B \mscrV,\ \shplus{\mbbX} \times_B \mscrW
\]
are Hilbert bundles over $\shplus{\mbbX}$, and $D$, $Q$ lift to maps between the above bundles, which we use the same notation. Let
\[
  \map{\Delta}{\shplus{\mbbX} \times_B \mscrV}{\shplus{\mbbX} \times_B \mscrW}
\]
be the lift of the tautological section of $\shplus{\mbbX} \times \hplus{\mbbX} \to \shplus{\mbbX}$. We call
\[
  \map{F = D + Q - \Delta}{\shplus{\mbbX} \times_B \mscrV}{\shplus{\mbbX} \times_B \mscrW}
\]
the perturbed families Seiberg--Witten map. Note that this map is $Pin(2)$-equivariant.

A gauge transformation $\map{f}{\mbbX_b}{U(1)}$ acts on $\mscrV$ and $\mscrW$ by
\[
  f \cdot (a, \phi) = (a - f^{-1} df, f \phi),\ f \cdot (g, \omega, \psi) = (g, \omega, f \psi).
\]
For each $b \in B$, the topological group $\Harm(\mbbX_b, U(1))$ consisting of harmonic functions is isomorphic to
\[
  H^1(\mbbX_b; 2 \pi i \Z) \times U(1),
\]
and so we cannot obtain finite-dimensional approximations of the (perturbed) families Seiberg--Witten map unless we mod out the map by the $H^1(\mbbX_b; 2 \pi i \Z)$ component. However, the isomorphism between $\Harm(\mbbX_b, U(1))$ and $H^1(\mbbX_b; 2 \pi i \Z) \times U(1)$ is not unique, and therefore we need additional data to fix the isomorphism. We introduce what we will call ``the Albanese torus without base points''\footnote{The author learned the construction from Mikio Furuta.}.

\begin{definition}\label{def:Alb(X)}
  Let $X$ be a closed oriented 4-manifold and fix a Riemannian metric. Identify $H^1(X; \Z)$ with the vector space consisting of harmonic 1-forms with integral period. Then we define $\Alb(X)$ by
  \[
    \Alb(X) = (X \times \Hom(H^1(X; \Z), U(1)))/ \sim,
  \]
  where $(x, f) \sim (y, g)$ if and only if for any $h \in \mcalH^1(X; \Z)$ we have
  \[
    g(h) = \exp\parenlr{2 \pi i \int_\gamma h} f(h),
  \]
  where $\gamma$ is a smooth path from $x$ to $y$. Note that the above expression does not depend on the choice of $\gamma$ since the integrations differ by integers for different choices of $\gamma$.
\end{definition}

It is straightforward that $\Alb(X)$ is isomorphic to $\Hom(H^1(X; \Z), U(1))$ but not canonically. We have the following proposition.

\begin{proposition}\label{prop:Alb}
  Let $X$ be a closed oriented 4-manifold and fix a Riemannian metric. Then $\Alb(X)$ is canonically isomorphic to the set of all homomorphisms
  \[
    \map{\Phi}{H^1(X; 2 \pi i \Z)}{\Harm(X, U(1))}
  \]
  satisfying
  \[
    \varphi^{-1} d\varphi = a
  \]
  for each $a \in H^1(X; 2 \pi i \Z)$ and $\varphi = \Phi(a)$.
\end{proposition}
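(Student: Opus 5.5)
We construct an explicit map from $\Alb(X)$ to the set of such homomorphisms and check it is a well-defined bijection. Given a representative $(x, f)$ with $x \in X$ and $f \in \Hom(H^1(X; \Z), U(1))$, define $\Phi_{(x,f)}$ as follows. For $a \in H^1(X; 2\pi i \Z)$, write $a = 2\pi i h$ with $h$ the harmonic 1-form of integral period. Set
\[
  \Phi_{(x,f)}(a)(y) = f(h) \exp\parenlr{2 \pi i \int_{\gamma_y} h},
\]
where $\gamma_y$ is a smooth path from $x$ to $y$. Since $h$ has integral periods, this does not depend on $\gamma_y$.

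\textbf{Properties of $\Phi_{(x,f)}(a)$.} Differentiating in $y$ gives $\varphi^{-1} d\varphi = 2\pi i h = a$. Since $h$ is harmonic, $d^\ast(\varphi^{-1}d\varphi) = 0$, so $\varphi$ is a harmonic map to $U(1)$. Homomorphicity in $a$ follows because both $f$ and the exponential of the line integral are multiplicative in $h$.

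\textbf{Well-definedness on $\Alb(X)$.} Suppose $(x,f) \sim (x', g)$, i.e.\ $g(h) = \exp(2\pi i \int_\gamma h) f(h)$ for a path $\gamma$ from $x$ to $x'$. Computing $\Phi_{(x',g)}(a)(y)$ with a path from $x'$ to $y$, and concatenating it with $\gamma$, gives exactly $\Phi_{(x,f)}(a)(y)$. So the map descends to equivalence classes.

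\textbf{Bijectivity.} The key point is that any $\varphi$ with $\varphi^{-1}d\varphi = a$ is determined by its value at a single point. Hence $\Phi$ is determined by the homomorphism $h \mapsto \Phi(2\pi i h)(x)$ for any fixed base point $x$.

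For surjectivity, fix $x \in X$. Given $\Phi$, set $f(h) = \Phi(2\pi i h)(x)$. This $f$ is a homomorphism because $\Phi$ is, and $\Phi = \Phi_{(x,f)}$ by the uniqueness above.

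For injectivity, suppose $\Phi_{(x,f)} = \Phi_{(x',g)}$. Evaluating at the point $x'$ gives $g(h) = f(h) \exp(2\pi i \int_\gamma h)$, which is exactly the relation $(x,f) \sim (x',g)$.

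Canonicity is clear, since no choices enter beyond the metric, which is used to identify classes with harmonic forms. The only subtle step is the well-definedness check, which is a routine path-concatenation argument; no step presents a real obstacle.
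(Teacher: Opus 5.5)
Your proof is correct and is the paper's argument: the same map $[x,f] \mapsto \Phi$ given by $\Phi(a)(y) = f(h)\exp(2\pi i \int_\gamma h)$, and the same inverse $f(h) = \Phi(2\pi i h)(x)$, with you additionally writing out the well-definedness, harmonicity, homomorphism and bijectivity checks that the paper only asserts. Your step that $\varphi$ is determined by its value at one point uses that $X$ is connected; the paper's definition of $\Alb(X)$, which joins any two points by a path, tacitly assumes the same.
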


\begin{proof}
  First fix an element $\bm{f} = [x, f] \in \Alb(X)$. Then we define a homomorphism
  \[
    \map{\Phi}{H^1(X; 2 \pi i \Z)}{\Harm(X, U(1))}
  \]
  by
  \[
    \Phi(a)(y) = f(a / 2 \pi i) \exp\parenlr{\int_\gamma a_{\text{harm}}},
  \]
  where $a_{\text{harm}}$ is a harmonic 1-form representing $a$ and $\gamma$ is a path from $x$ to $y$. This map is independent of the choice of $(x, f)$. The inverse correspondence is given as follows. For a homomorphism
  \[
    \map{\Phi}{H^1(X; 2 \pi i \Z)}{\Harm(X, U(1))}
  \]
  and $x \in X$, we define
  \[
    \map{f}{H^1(X; \Z)}{U(1)}
  \]
  by
  \[
    f(h) = \Phi(2 \pi i h)(x).
  \]
  Then $[x, f]$ does not depend on the choice of $x$.
\end{proof}

By applying the above construction to $\mbbX$, we obtain a torus bundle
\[
  \Alb(\mbbX) = \coprod_{b \in B} \Alb(\mbbX_b) \to B.
\]
By pulling back the perturbed Seiberg--Witten map to $\shplus{\mbbX} \times \Alb(\mbbX)$, we can canonically mod out the map by the $H^1(\mbbX_b; 2 \pi i\Z)$ component. As a result, we obtain a map
\[
  \map{F^\prime}{\shplus{\mbbX} \times_B \mscrV^\prime}{\shplus{\mbbX} \times_B \mscrW^\prime},
\]
where
\begin{align*}
  \mscrV^\prime & = \coprod_{(b, \bm{f}, a) \in \mbbB} \parenlr{H^1(\mbbX_b; i\R) \times \Omega^1_0(\mbbX_b; i\R) \times \Gamma(S^+_b)} / H^1(\mbbX_b; 2 \pi i\Z),                               \\
  \mscrW^\prime & = \coprod_{(b, \bm{f} , a) \in \mbbB} \parenlr{H^1(\mbbX_b; i\R) \times \Omega^0_0(\mbbX_b; i\R) \times \Omega^+(\mbbX_b; i\R) \times \Gamma(S^-_b)} / H^1(\mbbX_b; 2 \pi i\Z)
\end{align*}
are $Pin(2)$-equivariant Hilbert bundles over $\mbbB = \Alb(\mbbX) \times_B \Pic^{\mfraks}(\mbbX)$. Here $\Omega^1_0(\mbbX_b; i\R)$ is the $L^2$-orthogonal complement of $H^1(\mbbX_b; i\R)$ in $\Omega^1(\mbbX_b; i\R)$ and
\[
  \Pic^\mfraks(\mbbX_b) = H^1(\mbbX_b; i\R) / H^1(\mbbX_b; 2 \pi i\Z).
\]

\subsection{Finite-dimensional approximation}

In this subsection we construct a finite-dimensional approximation of $F^\prime$. We will use the same notation as in \cref{subsec:SW map B_1 > 0}. The formalization used here is the mixture of the methods by Bauer--Furuta \cite{Bauer--Furuta-2004} and Furuta \cite{Furuta-11/8-inequality2001}.

First, we consider the Sobolev completion of $\mscrV^\prime$ and $\mscrW^\prime$. Fix an integer $k \geq 3$. For $v \in \mscrV^\prime$ and $w \in \mscrW^\prime$, let
\[
  \norm{v}_{\mscrV^\prime}^2 = \norm{(D^\ast D)^{k / 2} v}_{L^2}^2 + \norm{v}_{L^2}^2,\ \norm{w}_{\mscrW^\prime}^2 = \norm{(D^\ast D)^{(k - 1)/2} w}_{L^2}^2 + \norm{v}_{L^2}^2
\]
and denote the completions of $\mscrV^\prime$ and $\mscrW^\prime$ by $\mscrW^\prime_k$ and $\mscrV^\prime_{k - 1}$. By using the Kuiper theorem, we can trivialize $\mscrV^\prime_{k - 1}$ respecting the $Pin(2)$-action.

We explain the precise formulation. First divide $\mscrW^\prime_{k - 1}$ into three parts
\begin{align*}
  (\mscrW^\prime_{k - 1})_{\mcalH^+} & = \coprod_{(b, \bm{f}, a) \in \mbbB} \parenlr{H^1(\mbbX_b; i\R) \times \hplus{\mbbX_b; i\R}} / H^1(\mbbX_b; 2 \pi i\Z),                                     \\
  (\mscrW^\prime_{k - 1})_\R         & = \coprod_{(b, \bm{f}, a) \in \mbbB} \parenlr{H^1(\mbbX_b; i\R) \times \Omega^0_0(\mbbX_b; i\R) \times \Omega^+_0(\mbbX_b; i\R)} / H^1(\mbbX_b; 2 \pi i\Z), \\
  (\mscrW^\prime_{k - 1})_\C         & = \coprod_{(b, \bm{f}, a) \in \mbbB} \parenlr{H^1(\mbbX_b; i\R) \times \Gamma(S^-_b)} / H^1(\mbbX_b; 2 \pi i\Z).
\end{align*}
(To be precise, the right-hand side is actually the $L^2_{k-1}$-completion.) Here $\hplus{\mbbX_b; i\R}$ is a vector space consisting of purely imaginary self-dual harmonic 2-forms and $\Omega^+_0(\mbbX_b; i\R)$ is its $L^2$-orthogonal complement. The bundle $\Gamma(S^-_b)$ is acted on by $H^1(\mbbX_b; 2 \pi i \Z)$ at $(b, \bm{f}, a)$ using the canonical lift of $a$ to a harmonic $U(1)$-valued function described in \cref{prop:Alb}. Note that while $(\mscrW^\prime_{k - 1})_\C$ is acted on by $Pin(2)$, it is not a quaternionic Hilbert bundle since $j \in Pin(2)$ does not necessarily fix points of $\Pic^\mfraks(\mbbX_b)$. The Hilbert bundle $(\mscrW^\prime_{k - 1})_\R$ is a pullback of
\[
  (\mscrW^\pprime_{k - 1})_\R = \coprod_{b \in B} \Omega^0_0(\mbbX_b; i\R) \times \Omega^+_0(\mbbX_b; i\R),
\]
which is a Hilbert bundle over $B$.

We define orthogonal frame bundles
\begin{align*}
  \Fr((\mscrW^\pprime_{k - 1})_\R) & = \coprod_{b \in B} \{H_\R \to  (\mscrW^\pprime_{k - 1})_{\R, b}\ \text{metric preserving real liner isomorphisms}\},                   \\
  \Fr((\mscrW^\prime_{k - 1})_\C)  & = \coprod_{\bm{b} \in \mbbB} \{H_\HB \to  (\mscrW^\prime_{k - 1})_{\C, \bm{b}}\ \text{metric preserving complex linear isomorphisms}\}.
\end{align*}
Here for $H_\R$ and $H_\HB$ are a separable real Hilbert space and a separable quaternionic Hilbert space. These are a principal $O_\R(H_\R)$-bundle and $O_\C(H_\HB)$-bundle. (The bundle $(\mscrW^\prime_{k - 1})_\C$ is not quaternionic as a whole, and so we cannot construct a principal $O_\HB(H_\HB)$-bundle.)

The following lemma explains the precise meaning of the trivialization.

\begin{lemma}\label{lem:Kuiper}
  Assume that $B$ is contractible. Then there exist global sections
  \[
    \map{s_\R}{B}{\Fr((\mscrW^\pprime_{k - 1})_\R)},\ \map{s_\C}{\mbbB}{\Fr((\mscrW^\prime_{k - 1})_\C)}
  \]
  with the following property: for each $\bm{b} \in \mbbB$ and $h_\HB \in H_\HB$,
  \begin{align*}
    s_\C(\bm{b} \cdot j) (h_\HB \cdot j) = s_\C(\bm{b})(h_\HB) \cdot j
  \end{align*}
  holds.

  Furthermore, for each positive integer $n$, if
  \[
    \map{\bm{s}_\R}{\partial D^n \times B}{\Fr((\mscrW^\pprime_{k - 1})_\R)},\ \map{\bm{s}_\C}{\partial D^n \times \mbbB}{\Fr((\mscrW^\prime_{k - 1})_\C)}
  \]
  are maps which satisfy the above property for each point in $\partial D^n$, then these can be extended to maps from $D^n \times B$ or $D^n \times \mbbB$, holding the above property.
\end{lemma}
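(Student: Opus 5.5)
The plan is to reduce both assertions to contractibility statements about the fibres of suitable bundles of frames, and then to use the standard fact that a fibre bundle with contractible fibres over a paracompact Hausdorff base admits a section, and that sections defined over a subspace such as $\partial D^n \times B$ extend over $D^n \times B$. For the real part, Kuiper's theorem says that $O_\R(H_\R)$ is contractible. Hence $\Fr((\mscrW^\pprime_{k - 1})_\R) \to B$ is a principal bundle with contractible fibre over a paracompact space. Since $B$ is contractible, this bundle is in fact trivial. A global section $s_\R$ therefore exists, and for the relative statement we take $D^n \times B$ relative to $\partial D^n \times B$ and extend, using that the pair $(D^n \times B, \partial D^n \times B)$ has the homotopy extension property for sections of a bundle with contractible fibre (a fibration with contractible fibres is shrinkable, by Dold's theorem).

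For the complex part, the constraint $s_\C(\bm{b} \cdot j)(h \cdot j) = s_\C(\bm{b})(h) \cdot j$ is an equivariance condition. Here $\mbbB$ carries the $\Z/4 = \abracket{j}$-action induced from $Pin(2)$, namely $j$ acting on $\Pic^\mfraks$ by $a \mapsto -a$. Combining this with the $U(1)$ part, the group $Pin(2)$ acts on the frame bundle $\Fr((\mscrW^\prime_{k - 1})_\C)$ by $(\bm{b}, s) \mapsto (\bm{b} \cdot g, R_g \circ s \circ R_g^{-1})$. The condition asks for a $Pin(2)$-equivariant section, or equivalently (since $U(1)$ already acts by complex scalars, which commute with complex-linear frames) a $\Z/4$-equivariant section where $j^2 = -1$ acts trivially on frames. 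So the requirement is a section of the induced bundle over the quotient $\mbbB/\abracket{j}$, or more carefully over the Borel construction, in the sense of Bredon-style equivariant obstruction theory.

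Concretely, I would proceed as follows. Over the fixed locus $\mbbB^j$ (where $a \in \Pic^\mfraks$ is a 2-torsion point; $\bm{f}$ is unaffected), the fibre of $(\mscrW^\prime_{k-1})_\C$ is genuinely quaternionic. The admissible frames there are the $O_\HB(H_\HB)$-frames, i.e.\ quaternion-linear isometries, and this space is contractible by the quaternionic Kuiper theorem. Off the fixed locus, $j$ pairs the fibres at $\bm{b}$ and $\bm{b} \cdot j$ freely (up to the trivially acting $j^2 = -1$), so a frame at $\bm{b}$ determines the frame at $\bm{b} \cdot j$. The space of choices is again a copy of $O_\C(H_\HB)$, contractible by Kuiper.

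Thus the bundle of ``equivariantly admissible'' frames has contractible fibres in the equivariant sense over each orbit type, and the existence and extension statements follow from equivariant obstruction theory (the $G$-CW or equivariant Dold shrinkability version). The set-up uses that $\mbbB$ is a finite-dimensional torus bundle over contractible $B$, so it is a $\Z/4$-space with finitely many orbit types and a $G$-equivariant halo structure. The relative version over $\partial D^n \times \mbbB$ follows in the same way, with $D^n$ carrying the trivial action.

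I expect the main obstacle to be this equivariant step. It needs (i) the observation that the $j$-fixed fibres are quaternionic Hilbert spaces, so that the fixed-point frame space is the contractible group $O_\HB$ rather than something empty or noncontractible; and (ii) a point-set argument, via paracompactness of $\mbbB/\abracket{j}$ and local triviality near the 2-torsion points of $\Pic^\mfraks$, ensuring that local equivariant sections can be patched by a partition of unity. Averaging is not available in the space of unitary frames, so I would instead patch using the contractibility/shrinkability of the fibres orbit-type by orbit-type. I would first treat a neighbourhood of the fixed locus via a slice, then the free part.
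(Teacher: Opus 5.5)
Your proposal is correct, and its two key inputs are the ones the paper uses. At the $j$-fixed points of $\mbbB$ (the sixteen 2-torsion points of $\Pic^\mfraks$, with the $B\times\Alb$ factor untouched) an equivariant frame must be quaternion-linear, so Kuiper's theorem for $O_\HB(H_\HB)$ applies. Off the fixed locus $j$ pairs fibres freely, and Kuiper's theorem for $O_\C(H_\HB)$ suffices.

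The difference is in how the equivariant extension is organized. The paper first uses contractibility of $B$ and compactness of the fibres to write $\mbbB = B\times\Alb(X)\times\Pic^\mfraks(X)$. It then puts an explicit $\Z/2$-CW structure on $\Pic^\mfraks(X)\cong T^4$, whose cells are products of $\{0\}$, $\{1/2\}$, $[0,1/2]$, $[1/2,1]$. With this structure the only fixed cells are the sixteen vertices, and every positive-dimensional cell is swapped freely with its $j$-translate. The section $s_\C$ is built skeleton by skeleton: on one cell of each free pair it is extended non-equivariantly, via the ratio with an arbitrary section and a contraction of $O_\C(H_\HB)$, and then transported to the other cell by $j$. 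This avoids the slice and partition-of-unity patching you identify as the main obstacle, at the cost of relying on the product structure of $\mbbB$.

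Your route is more general and also works: take an equivariant section on an invariant neighbourhood of the fixed locus, then a section of the honest $O_\C(H_\HB)$-bundle over the free quotient. The one thing to verify is equivariant local triviality near the fixed points. This holds: after lifting a neighbourhood of a 2-torsion point to $H^1(\mbbX_b;i\R)$, $j$ acts on the fibre $\Gamma(S^-_b)$ by an anti-linear map independent of the $\Pic$-coordinate.

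For the relative statement, your plan of rerunning the argument on $(D^n\times\mbbB,\partial D^n\times\mbbB)$ is, if anything, cleaner than the paper's. The paper takes ratios with a fixed equivariant section and contracts $O_\C(H_\HB)$, which tacitly needs a contraction of $O_\C(H_\HB)$ commuting with conjugation by $j$.
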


\begin{proof}
  The statement for $s_\R$ is obvious since $B$ is contractible.

  We prove the existence of $s_\C$. First note that $\Alb(\mbbX_b) \times Pic^\mfraks(\mbbX_b)$ is compact for each $b \in B$. Together with the assumption that $B$ is a contractible paracompact Hausdorff space, the fiber bundle $\mbbB \to B$ is trivial. So it suffices to show the statement assuming
  \[
    \mbbB = B \times \Alb(X) \times \Pic^\mfraks(X).
  \]
  Since $\mbbB$ is homotopy equivalent to the CW complex $\Alb(X) \times \Pic^\mfraks(X)$, the Kuiper theorem shows that $\Fr((\mscrW^\prime_{k - 1})_\C)$ admits a global section. However, it is not trivial that the section can be chosen to be $Pin(2)$-equivariant. So we have to carefully construct $s_\C$.

  Fix a homeomorphism
  \[
    \Pic^\mfraks(X) \approx T^4
  \]
  so that
  \[
    \{0, 1/2 \}^4
  \]
  corresponds to the fixed point set for the $j$-action. We endow $\Pic^\mfraks(X)$ with a CW structure so that any $k$-cell is a set of elements
  \[
    (x_1, x_2, x_3, x_4) \in T^4 = ([0, 1] / \{0, 1\})^4
  \]
  of the form $x_i = 0, 1 / 2$ or $x_i \in [0, 1 / 2], [1/2, 1]$ for each $i = 1, 2, 3, 4$.

  We construct $s_\C$ cell by cell. First, $(\mscrW^\prime_{k - 1})_\C$ is a quaternionic Hilbert bundle when restricted to
  \[
    B \times \Alb(X) \times \Pic^\mfraks(X)^{(0)}.
  \]
  (The notation $\Pic^\mfraks(X)^{(k)}$ stands for the $k$-skeleton.) Thus by the Kuiper theorem we obtain a section
  \[
    \map{s_\C^{(0)}}{B \times \Alb(X) \times \Pic^\mfraks(X)^{(0)}}{\Fr((\mscrW^\prime_{k - 1})_\C)}
  \]
  which is $Pin(2)$-equivariant. Next we extend $s_\C^{(0)}$ to a map
  \[
    \map{s_\C^{(1)}}{B \times \Alb(X) \times \Pic^\mfraks(X)^{(1)}}{\Fr((\mscrW^\prime_{k - 1})_\C)}.
  \]
  In order for $s_\C^{(1)}$ to be $Pin(2)$-equivariant, it suffices to construct an extension on
  \[
    B \times \Alb(X) \times (\Pic^\mfraks(X)^{(1)} \cap \{x_i \in [0, 1 / 2]\}),
  \]
  and the extension on
  \[
    B \times \Alb(X) \times (\Pic^\mfraks(X)^{(1)} \cap \{x_i \in [1 / 2, 1]\})
  \]
  is uniquely determined from that.

  Take a section $s_\C^{(1)\prime}$ of $\Fr((\mscrW^\prime_{k - 1})_\C)$ on
  \[
    B \times \Alb(X) \times \Pic^\mfraks(X)^{(1)}
  \]
  which is not necessarily equal to $s_\C^{(0)}$ on
  \[
    B \times \Alb(X) \times \Pic^\mfraks(X)^{(0)}.
  \]
  Then we have a continuous map
  \[
    \map{g}{B \times \Alb(X) \times \Pic^\mfraks(X)^{(0)}}{O_\C(H_\HB)}
  \]
  which satisfies
  \[
    s_\C^{(1)^\prime} g = s_\C^{(0)}.
  \]
  Using the contraction of $O_\C(H_\HB)$, we extend $g$ to a map on
  \[
    B \times \Alb(X) \times \Pic^\mfraks(X)^{(1)},
  \]
  and we put $s_\C^{(1)}= s_\C^{(1)^\prime} g$. The extensions to the higher dimensional skeletons are similar.

  Finally we show the extension property of maps from $\partial D^n \times \mbbB$ to $\Fr((\mscrW^\prime_{k - 1})_\C)$. Fix a $Pin(2)$-equivariant section $s_\C$ on $B$. By taking ratios, any $Pin(2)$-equivariant sections is identified with some $Pin(2)$-equivariant map from $\mbbB$ to $O_\C(H_\HB)$. The contraction of $O_\C(H_\HB)$ gives one of the desired extension.
\end{proof}

We can show the following lemmas, which is parallel to Furuta \cite[Lemma 3.2, 3.3]{Furuta-11/8-inequality2001}.

\begin{lemma}\label{lem:R and epsilon}
  Assume that $B$ is contractible and take sections $s_\R$ and $s_\C$ as in \cref{lem:Kuiper}. Fix a point $b_0 \in B$. Then there exist an open neighborhood $U^\prime$ of $b_0$, $R > 0$ and $\varepsilon > 0$ which satisfy the following:
  \begin{enumarabicp}
    \item For $v \in (\shplus{\mbbX} \times_B \mscrV^\prime_{k - 1})_{\bm{b}}$ with $\norm{v}_{\mscrV^\prime} \geq R$, $\bm{b} \in \restr{\mbbB}{U^\prime}$, we have
    \[
      \norm{F(v)}_{\mscrW^\prime} \neq 0.
    \]
    \item If further $R \leq \norm{v}_{\mscrV^\prime} \leq \sqrt{2} R$, then $\norm{F(v)}_{\mscrW^\prime} \geq \varepsilon$.
  \end{enumarabicp}
\end{lemma}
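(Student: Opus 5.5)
We follow Furuta's argument \cite[Lemma 3.2, 3.3]{Furuta-11/8-inequality2001}, adapted to the perturbed families map $F^\prime$ over $\shplus{\mbbX} \times_B \mbbB$. The plan has three steps.

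\textbf{Step 1: zeros of $F$ are bounded uniformly near $b_0$.}
Let $v = (h, a, \phi)$ with $F(v) = 0$ over $(\omega, \bm{b})$, where $\omega \in \shplus{\mbbX_b}$. Such a $v$ solves the Seiberg--Witten equations with the self-dual harmonic perturbation $\omega$, $\abs{\omega} = 1$.
\begin{enumarabicp}
  \item The Weitzenb\"ock formula and the maximum principle give a $C^0$ bound on $\phi$. It depends only on $\sup \abs{s_g}$ and $\sup \abs{\omega}$.
  \item The coordinate $h \in H^1$ has already been reduced modulo $H^1(\mbbX_b; 2\pi i \Z)$ using $\bm{f} \in \Alb(\mbbX_b)$, so it lives in the compact torus $\Pic^\mfraks(\mbbX_b)$. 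Together with $d^\ast a = 0$ this fixes the gauge.
  \item The equation $d^+ a = q(\phi) + \omega$ on the $L^2$-complement of harmonic forms, plus elliptic estimates for $d^\ast + d^+$, give $L^2_1$ bounds on $a$.
  \item Bootstrapping with the Dirac equation then gives $L^2_k$ bounds on $(a, \phi)$.
\end{enumarabicp}
The constants depend continuously on the metric. Since the family of metrics is continuous and $\Alb(\mbbX_b) \times \Pic^\mfraks(\mbbX_b)$ is compact, they can be taken uniform over $\restr{\mbbB}{U^\prime}$ for a neighbourhood $U^\prime$ of $b_0$ with compact closure. Hence there is $R_0$ with $\norm{v}_{\mscrV^\prime} < R_0$ for every zero over $U^\prime$. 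Any $R \ge R_0$ then gives (1).

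\textbf{Step 2: a lower bound $\varepsilon$ on the annulus, by contradiction.}
Fix $R = R_0 + 1$ and suppose no $\varepsilon$ works, after shrinking $U^\prime$ if needed. Then there are parameters $\bm{b}_n$ with base points tending to $b_0$, and vectors $v_n$ with $R \le \norm{v_n} \le \sqrt{2}R$ and $\norm{F(v_n)} \to 0$. The base $\shplus{\mbbX} \times_B \mbbB$ over a compact neighbourhood is compact, so we may pass to a subsequence with convergent parameters. Compare all fibres with the fibre over the limit via the fixed continuous family of metrics and the trivialization of the bundle of Sobolev spaces. Write $F = D - \Delta + Q$, where:
\begin{itemize}
  \item $D$ is a Fredholm linear operator, invertible up to a finite-dimensional kernel;
  \item $Q$ is compact as a map $L^2_k \to L^2_{k-1}$, by Rellich and the Sobolev multiplication $L^2_k \times L^2_k \to L^2_k$ for $k \ge 3$;
  \item $\Delta$ is finite rank.
\end{itemize}
After passing to a subsequence, $v_n$ converges weakly, and $Q(v_n)$ and $\Delta$ converge strongly. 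Then $D v_n$ converges strongly on the complement of $\ker D$, and $\ker D$ is finite-dimensional. So $v_n$ converges strongly to some $v_\infty$ with $R \le \norm{v_\infty} \le \sqrt{2}R$ and $F(v_\infty) = 0$. This contradicts Step 1.

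\textbf{Main obstacle.}
The delicate point is uniformity in the family. The operators $D$ and $Q$ on different fibres act on different spaces, and the $H^1$-quotient is twisted by $\bm{f}$. So we must check that, after identifying fibres through local trivializations compatible with the Sobolev norms, $D$ varies continuously in operator norm and $Q$ continuously as a compact map. This follows from the continuity of the family of metrics, together with the fact that the identification of $\Harm(\mbbX_b, U(1))$ in \cref{prop:Alb} depends continuously on $(b, \bm{f})$. The sections $s_\R$ and $s_\C$ of \cref{lem:Kuiper} are not needed for the estimates themselves. They only fix the trivializations in which the norms are compared.
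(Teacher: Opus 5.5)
Your proposal is correct and is the route the paper intends. The paper gives no separate proof; it only says the lemma parallels \cite[Lemma 3.2, 3.3]{Furuta-11/8-inequality2001}, and your argument is exactly that standard one: a uniform a priori bound on zeros, followed by a compactness-by-contradiction argument on the annulus using $F = D + Q - \Delta$ with $D$ Fredholm and $Q$ compact.

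One small tidy-up: $B$ is only assumed paracompact Hausdorff, and in the application it is infinite-dimensional. So you cannot use a neighbourhood of $b_0$ with compact closure, or a sequence of base points tending to $b_0$. Instead, get the uniformity over $U^\prime$ from continuity of the metrics at $b_0$, for example by running the contradiction argument with nets, or by proving the bound on the fibre over $b_0$ on a slightly wider annulus and perturbing. This does not change your strategy.
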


\begin{lemma}\label{lem:big fin-dim subsp}
  Assume that $B$ is contractible and take sections $s_\R$ and $s_\C$ as in \cref{lem:Kuiper}. Fix a point $b_0 \in B$ and take $U^\prime$, $R$ and $\varepsilon$ as in \cref{lem:R and epsilon}. Then there exist $Pin(2)$-equivariant finite-dimensional subspaces
  \[
    W_\R^\prime \subset H_\R,\ W_\HB^\prime \subset H_\HB
  \]
  and an open neighborhood $U \subset U^\prime$ of $b_0$ with the following properties:
  \begin{enumarabicp}
    \item The subspaces
    \[
      W_{\bm{b}} = \hplus{\mbbX_b} \oplus s_\R(\bm{b}) W_\R^\prime \oplus s_\C(\bm{b}) W_\HB
    \]
    and $\Image D$ span $\mscrW^\prime_{k - 1, \bm{b}}$ for each $\bm{b} = (b, \bm{f}, a) \in \restr{\mbbB}{U}$.
    \item An estimate
    \[
      \norm{(1 - p_{\bm{b}}) Q (v)}_{\mscrW^\prime} < \varepsilon
    \]
    holds for each $v \in (\shplus{\mbbX} \times_B \mscrV^\prime_{k - 1})_{\bm{b}}$, $R \leq \norm{v} \leq \sqrt{2} R$. $\bm{b} \in \restr{\mbbB}{U}$. Here $p_{\bm{b}}$ is the $L^2$-projection from to $\mscrW^\prime_{k - 1, \bm{b}}$ to $W_{\bm{b}}$.
  \end{enumarabicp}
\end{lemma}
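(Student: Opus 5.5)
We prove \cref{lem:big fin-dim subsp} by the standard compactness argument of Furuta \cite[Lemma 3.3]{Furuta-11/8-inequality2001}. The additional work is to keep every choice $Pin(2)$-equivariant and uniform over the compact fibers of $\mbbB \to B$. Since $B$ is contractible, $\mbbB$ is trivialized as $B \times \Alb(X) \times \Pic^\mfraks(X)$, as in the proof of \cref{lem:Kuiper}. The fiber $\mbbB_{b_0} = \Alb(\mbbX_{b_0}) \times \Pic^\mfraks(\mbbX_{b_0})$ is compact, and all constructions below are carried out near $\{b_0\} \times \mbbB_{b_0}$.

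\textbf{Step 1: property (1).} Fix $\bm{b}$ over $b_0$. The operator $D$ is Fredholm on each fiber: the $H^1$-directions cancel between $\mscrV^\prime$ and $\mscrW^\prime$, and the remaining operator is the elliptic complex $(d^\ast, d^+)$ together with the Dirac operator. Hence $\operatorname{coker} D_{\bm{b}}$ is finite-dimensional. Its real part is exactly $\hplus{\mbbX_{b_0}}$, which already lies in $W_{\bm{b}}$. Its complex part is $\ker D^-$. Choose finite-dimensional subspaces $W_\R^\prime \subset H_\R$ and $W_\HB^\prime \subset H_\HB$ such that the $L^2$-projection of $s_\R(\bm{b})W_\R^\prime \oplus s_\C(\bm{b})W_\HB^\prime$ onto the cokernel is surjective. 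Surjectivity modulo $\Image D$ is an open condition in $\bm{b}$, by continuity of $D$, $s_\R$ and $s_\C$ and upper semicontinuity of cokernels. So a fixed choice works on a neighborhood of $\bm{b}$. Compactness of $\mbbB_{b_0}$ then gives a finite subcover; we take the sum of the finitely many subspaces, and enlarge $W_\HB^\prime$ to its $Pin(2)$-saturation $W_\HB^\prime + W_\HB^\prime j$, which is still finite-dimensional. Enlarging $W^\prime$ never destroys spanning. The tube lemma (compact fiber, $\mbbB \to B$ proper) then yields a neighborhood $U_1 \subset U^\prime$ of $b_0$ on which (1) holds.

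\textbf{Step 2: property (2).} The key input is compactness of $Q$. Multiplication $L^2_k \times L^2_k \to L^2_k$ followed by the inclusion $L^2_k \hookrightarrow L^2_{k-1}$ is compact on the closed $4$-manifold, for $k \ge 3$. So the image $K_{\bm{b}} = Q(\{R \le \norm{v} \le \sqrt{2}R\})$ is precompact in each fiber. We need the union of these images over a compact set of parameters to be precompact after transporting everything into the fixed model $H_\R \oplus H_\HB$ via $s_\R^{-1}$ and $s_\C^{-1}$. I would prove this by working on $\restr{\mbbB}{\bar U}$ for a small compact neighborhood $\bar U$ of $b_0$ (shrinking if $B$ is not locally compact, or arguing sequentially over $b_n \to b_0$). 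The justification is that the trivializations vary continuously in operator norm, and $Q$ depends continuously on $\bm{b}$ uniformly on bounded sets.

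Given a precompact set $K$ in $H_\R \oplus H_\HB$, finitely many $\varepsilon/2$-balls cover it. Let $W^\prime$ contain their centers, saturated under $Pin(2)$. Then $\norm{(1-p)x} < \varepsilon$ for all $x \in K$. This is because $p$ is the orthogonal projection onto a space containing $s(\bm{b})W^\prime$, so its distance to the image can only be smaller; the extra summand $\hplus{\mbbX_b}$ only helps. Finally we add these vectors to the subspaces from Step 1 and take $U \subset U_1$.

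\textbf{Main obstacle.} The delicate point is the uniformity in Step 2. Precompactness of the parametrized family $\bigcup_{\bm{b}} s(\bm{b})^{-1} K_{\bm{b}}$ needs continuity of $Q$ in the family, including the twisting by $H^1(\mbbX_b; 2\pi i\Z)$ through the lift of \cref{prop:Alb}. I would handle it by contradiction. Suppose there is a sequence $\bm{b}_n \to \bm{b}_\infty$ (possible after passing to a subsequence, by compactness of the fiber) and $v_n$ with $(1-p_{\bm{b}_n})Q(v_n) \ge \varepsilon$ for every finite choice of $W^\prime$. Elliptic compactness then gives a convergent subsequence of $Q(v_n)$ in the model space, which contradicts the choice of a finite $\varepsilon/2$-net.
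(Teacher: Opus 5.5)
Your proposal is correct and is essentially the argument the paper intends. The paper gives no proof of its own and simply points to Furuta's Lemma 3.2--3.3, i.e.\ openness of surjectivity for the spanning condition, plus compactness of $Q$ and a finite $\varepsilon/2$-net for the estimate; you have adapted this to the family setting using compactness of the fiber $\mbbB_{b_0}$ and $Pin(2)$-saturation of $W_\HB^\prime$.

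One small repair is needed because $B$ need not be locally compact or first countable. Replace your compact neighbourhood $\bar U$ and your sequential contradiction argument by nets. More simply, choose $W^\prime$ so the estimate holds with margin $\varepsilon/2$ over the compact set $\mbbB_{b_0}$. Then pass to a neighbourhood using upper semicontinuity of $\bm{b} \mapsto \sup_{\norm{v} \le \sqrt{2}R} \norm{(1 - p_{\bm{b}}) Q(v)}$ and the tube lemma, exactly as in your Step 1.
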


The vector bundle $W = \coprod_{\bm{b} \in \mbbB} W_{\bm{b}}$ splits into two subbundles
\[
  \hplus{\mbbX} \oplus s_\R(W_\R^\prime),\ s_\C(W_\HB).
\]
In general, if a vector bundle $E \to B^\prime$ has a real summand and a complex summand, we write $E_\R$, $E_\R$ for each summand. Further if $E$ has a metric, we use the notation $B^\prime_R(E)$ for
\[
  B_R(E_\R) \times_{B^\prime} B_R(E_\C),
\]
where $B_R$ denotes a disk bundle of radius $R > 0$.

\begin{definition}\label{def:construction of FDA}
  Fix $b \in B$. Let $U^\prime$, $R$, $\varepsilon$, $U$ $W_\R^\prime$, $W_\HB$ as in \cref{lem:R and epsilon} and \cref{lem:big fin-dim subsp}. Let $V$, $W$ be finite-dimensional vector bundle over $\restr{\mbbB}{U}$ defined by
  \begin{align*}
    W & = \rho^\ast \hplus{\mbbX} \oplus \rho^\ast s_\R(W_\R^\prime) \oplus s_\C(W_\HB^\prime), \\
    V & = D^{-1}(W).
  \end{align*}
  Here $\rho$ is a projection from $\Pic^\mfraks(\restr{\mbbX}{U})$ onto $U$. We define a finite-dimensional approximation $F^\prime_f$ of $F^\prime$ by
  \[
    \map{F^\prime_f = p F^\prime}{\shplus{\restr{\mbbX}{U}} \times_U B_R^\prime(V)}{\shplus{\restr{\mbbX}{U}} \times_U W}.
  \]
\end{definition}

The map $F^\prime_f$ does not vanish on
\[
  \shplus{\restr{\mbbX}{U}} \times_U ((B_R(V_\R) \times_{\restr{\mbbB}{U}} \{0\}) \cup \partial B^\prime_R(V)).
\]

\subsection{Property of the finite-dimensional approximation map}

In this subsection, we abstract the property of the finite-dimensional approximation map constructed in \cref{subsec:SW map B_1 > 0}. The basic line is the same as \cite[Section 4.2]{Adachi-gerbe-like-2026}, except that the involved vector bundles are over $\Pic^\mfraks(\restr{\mbbX}{U})$.

\begin{definition}\label{def:model of FDA}
  A model of FDA for families of spin closed 4-manifolds consists of the following data:
  \begin{enumarabicp}
    \item  A topological space $\mcalU$.
    \item A finite-dimensional real vector bundle $H^+$ over $\mcalU$.
    \item A finite-rank free $\Z$-module bundle $H^1_\Z$ over $\mcalU$ and a vector bundle $H^1_\R = H^1_\Z \otimes \R$. We set $\Pic = H^1_\R / H^1_\Z$.
    \item Finite-dimensional real vector bundles $V_\R$, $W_\R$ with metrics over $\mcalU$.
    \item Finite-dimensional complex vector bundles $V_\C$, $W_\C$ with metric over $\Pic$. These bundles are equipped with metric-preserving $Pin(2)$-actions, where complex numbers act as scalar multiplications and the $j$-actions are anti-linear isomorphisms that cover the antipodal map on $\Pic$.
    \item An injective bundle map $\map{i}{H^+}{W_\R}$.
    \item $Pin(2)$-equivariant bundle maps $\map{D_\R}{V_\R}{W_\R}$, $\map{D_\C}{V_\C}{W_\C}$. The maps $D_\R$ and $i$ give a metric-preserving isomorphism
    \[
      V_\R \oplus H^+ \cong W_\R.
    \]
    \item A $Pin(2)$-equivariant fiberwise smooth map
    \[
      \map{F}{S(H^+) \times_\mcalU B^\prime(V)}{S(H^+) \times_\mcalU W}
    \]
    continuous with respect to the $C^\infty$ topology, where
    \[
      V =  V_\C \oplus \rho^\ast V_\R,\ W = W_\C \oplus \rho^\ast W_\R
    \]
    and $\rho$ is a projection from $\Pic$ onto $\mcalU$. The map $F$ does not vanish on
    \[
      S(H^+) \times_\mcalU ((B_R(V_\R) \times_{\restr{\mbbB}{\mcalU}} \{0\}) \cup \partial B^\prime_R(V)).
    \]
  \end{enumarabicp}
\end{definition}

\begin{example}\label{eg:FDA}
  We use the notations in \cref{subsec:SW map B_1 > 0}. The following tuple is an example of a model of FDA:
  \begin{align*}
    \mcalU & = \Alb(\restr{\mbbX}{U}),                                                \\
    H^+    & = \hplus{\restr{\mbbX}{U}},                                              \\
    H^1_\Z & = H^1(\mbbX; 2 \pi i \Z),                                                \\
    W_\R   & = \rho^\ast \hplus{\restr{\mbbX}{U}} \oplus \rho^\ast s_\R(W_\R^\prime), \\
    W_\C   & = s_\C(W_\HB^\prime),                                                    \\
    V      & =D^{-1}(W).                                                              \\
  \end{align*}
  Setting $\map{m_R}{V}{V}$ as the $R$-times map, we define
  \begin{align*}
    \text{``$D$''} & = D \circ m_R,          \\
    \text{``$F$''} & = F^\prime_f \circ m_R.
  \end{align*}
  (The left-hand side refers to the notation in \cref{def:model of FDA} and the right-hand side refers to the notation in \cref{def:construction of FDA}. We add the quotation mark in order to avoid confusion caused by the conflicting notation.) We replace the metric on $W_\R$ so that the isomorphism
  \[
    \map{D_\R \oplus i}{V_\R \oplus H^+}{W_\R}
  \]
  preserves the metrics.
\end{example}

When $X$ is a homology 4-torus with odd determinant, the finite-dimensional approximation map satisfies the additional properties.

\begin{definition}\label{def:model of FDA homology tori}
  A model of FDA for families of spin closed 4-manifolds
  \[
    \mscrF = (\mcalU, H^+, H^1_\Z, V_\R, W_\R, V_\C, W_\C, i, D_\R, D_\C, F)
  \]
  is called a model of FDA for families of homology 4-tori with odd determinant if it satisfies the following properties:
  \begin{enumarabicp}
    \item The rank of $H^+$ is 3 and $H^+$ is equipped with an orientation.
    \item The rank of $H^1_\Z$ is 4.
    \item For each $b \in \mcalU$, the cohomology class
    \[
      c(\mscrF_b) \in H^2(S(H^+_b); \Z)
    \]
    defined in \cref{rem:wcn} is an odd multiple of the generator.
  \end{enumarabicp}
\end{definition}

\begin{remark}\label{rem:wcn}
  The definition of $c(\mscrF_b)$ is as follows. To avoid notational complexity, assume that $\mcalU$ is a point and we drop $b$ from the notation. Define
  \[
    \map{F^\prime}{S(H^+) \times (S(V_\C) \times_{\Pic} B(V_\R)) \times [-1, 1]}{S(H^+) \times ((S(V_\C) \times_{\Pic} W_\C) \times W_\R)}
  \]
  by
  \[
    F^\prime(h, v_\C, v_\R, t) = (h, v_\C, w_\C, w_\R),
  \]
  where $w_\C$, $v_\R$ are elements that satisfy
  \[
    (h, w_\C, w_\R) = F\parenlr{h, \frac{1 + t}{2} v_\C, v_\R}.
  \]
  Taking the quotient by the $U(1)$-action, we obtain a map
  \[
    \map{\mcalF^\prime}{\mcalV^\prime}{\mcalW},
  \]
  where
  \begin{align*}
    \mcalV^\prime & = S(H^+) \times (S(V_\C) / U(1) \times_{\Pic} B(V_\R)) \times [-1, 1], \\
    \mcalW        & = S(H^+) \times ((S(V_\C) \times_{\Pic} W_\C) / U(1) \times W_\R)
  \end{align*}
  Viewing $\mcalW$ as a vector bundle over
  \[
    S(H^+) \times S(V_\C) / U(1),
  \]
  we have the Thom class $\tau_{\mcalW}$. Then we define
  \[
    c(\mscrF) = \int_{\mcalV^\prime \to S(H^+)} c_1(\mcalO(1)) \cdot \mcalF^{\prime\ast} \tau_{\mcalW}.
  \]
\end{remark}

\begin{proposition}
  If $X$ is a homology 4-torus with odd determinant, then the model of FDA constructed in \cref{eg:FDA} is the model of FDA for families of homology 4-tori with odd determinant.
\end{proposition}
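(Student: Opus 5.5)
Conditions (1) and (2) of \cref{def:model of FDA homology tori} are immediate and can be dispatched first. For a homology 4-torus $X$ we have $b_1(X) = 4$ and $b_2(X) = 6$. Since the intersection form of $H^2(T^4)$ is $3H$, the same holds for $X$ (the cup product pairing is determined by $H^1$ when $\det X \neq 0$; alternatively, the signature vanishes because $X$ is spin with $b_2 = 6$ and $\sigma \equiv 0 \bmod 16$ forces $\sigma = 0$). Hence $b^+(X) = 3$. The orientation of $\hplus{\restr{\mbbX}{U}}$ can be chosen because $U$ is a contractible neighborhood. The rank of $H^1(\mbbX_b; 2\pi i\Z)$ is $b_1 = 4$. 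The remaining content is the parity statement (3).

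For (3), I reduce to a single fiber: $c(\mscrF_b)$ only depends on $\mscrF_b$, so I may take $\mcalU$ to be a point, i.e., a single Riemannian homology torus with a point of $\Alb(X)$. The class $c(\mscrF) \in H^2(S(H^+);\Z) \cong \Z$ is detected by pairing with $[S(H^+)]$. I will interpret this pairing as a Seiberg--Witten type count. The integrand $c_1(\mcalO(1)) \cdot \mcalF^{\prime\ast}\tau_{\mcalW}$, integrated over the fibers of $\mcalV^\prime \to S(H^+)$ and then over $S(H^+)$, is the integral over $\mcalV^\prime$ of $c_1(\mcalO(1))$ restricted to the zero locus of the $U(1)$-quotient of the deformed map $\mcalF^\prime$. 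The $[-1,1]$ parameter interpolates the spinor rescaling, and the Thom class localizes onto the parametrized moduli space $\mcalM$ over $S(H^+) \cong S^2$.

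By standard transversality, $\mcalM$ is a compact manifold of dimension $d + 2$, where $d = \ind_\R - 1$ is the dimension of the SW moduli space for the spin structure:
\[
d = \tfrac{1}{4}(c_1^2 - \sigma) - (1 - b_1 + b^+) = 0 - (1 - 4 + 3) = 0.
\]
Since $\mcalM$ has dimension $2$, the pairing $\langle c(\mscrF), [S(H^+)]\rangle$ equals $\langle c_1(\mcalO(1)), [\mcalM]\rangle$. This is the families-over-$S^2$ SW invariant, which by a wall-crossing/Künneth argument equals the ordinary SW invariant of $X$ paired with the $\mu$-class. Equivalently, via the $b_1$ contribution, it is the degree-counting invariant computed by Ruberman--Strle \cite{Ruberman--Strle-mod2-SW-homology-tori-2000}, which is $\equiv \det X \pmod 2$.

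So the plan is as follows. First, express $c(\mscrF)$ as this families invariant, using the cobordism invariance of FDAs under changes of $U$, $R$, and the subspaces $W'$. Second, compute its parity by the result of Ruberman--Strle. The latter computes the invariant as the count, over $\Pic^{\mfraks}(X) \cong T^4$, of the reducible locus perturbed by $\hplus{}$ directions. The $b_1$-twisting there gives the triple/quadruple cup product $\langle\alpha_1\cup\alpha_2\cup\alpha_3\cup\alpha_4,[X]\rangle$, which is odd by assumption. Alternatively, I could compute directly: a cohomology computation on $S(V_\C)/U(1) \times_{\Pic} B(V_\R) \to \Pic \times S^2$ using the $Pin(2)$-symmetry, which pairs solutions off except at fixed points and thus gives a mod 2 count. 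The main obstacle I expect is this identification of $c(\mscrF)$ with a known invariant whose parity is $\det X$. Concretely, it requires carefully tracking the Thom-class pushforward through the $U(1)$-quotient and the degree of the family index bundle over $\Pic$, where $c_1$ of the index bundle contains the term involving $\det X$.
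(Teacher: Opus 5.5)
Your treatment of (1) and (2) is fine. So is your identification of $\langle c(\mscrF_b),[S(H^+_b)]\rangle$ with $\langle c_1(\mcalO(1)),[\mcalM]\rangle$ for the $2$-dimensional parametrized moduli space over $S(H^+_b)\cong S^2$.

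\textbf{The gap.} The problem is the next step, where you equate this number with the ordinary Seiberg--Witten invariant of $X$ and then invoke Ruberman--Strle. That identification is false.

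\begin{enumerate}
\item The perturbation in the model is the tautological section $\Delta$, so the parameter sphere $S(H^+_b)$ links the reducible wall at $0\in H^+_b$.
\item If you replaced it by a constant perturbation in the (unique) chamber, the parametrized moduli space would be $S^2\times\mcalM_X$ with $\mcalM_X$ finite. The $U(1)$-bundle would be pulled back from $\mcalM_X$, so the pairing would be $0$ whatever $\SW_{X,\mfraks}(1)$ is.
\item Hence $c(\mscrF_b)$ is purely a wall-crossing term. This is exactly what the paper uses: the families wall-crossing formula (Li--Liu, Baraglia--Konno) gives $\langle c(\mscrF_b),[S(H^+_b)]\rangle=\pm s_2(\ind D_\mfraks)[\Pic]$.
\end{enumerate}

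Ruberman--Strle determine the parity of $\SW_{X,\mfraks}(1)$, which is a priori a different integer. The statement that the two agree mod $2$ is the $b^+=3$ case of \cref{thm:mod 2 SW}, a nontrivial theorem (Baraglia). Its proof in \cref{sec:mod 2 SW} itself needs the wall-crossing formula to compute $c_1(L)$. So routing through Ruberman--Strle is, at best, a detour that already requires the ingredient you are trying to avoid.

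Your alternative $Pin(2)$-pairing argument also does not compute $c(\mscrF_b)$. The element $j$ acts on $S(H^+)$ by the antipodal map, hence freely, so there are no fixed points to localize to. In the paper, the $j$-symmetry is what relates $c(\mscrF_b)$ to the ordinary invariant mod $2$, not what determines $c(\mscrF_b)$.

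\textbf{How to close it.} Apply the wall-crossing formula directly and compute the index bundle with the families index theorem.
\begin{enumerate}
\item Take $\beta_i\in H^1(\Pic)$ dual to the $\alpha_i$. The Poincar\'e line bundle on $X\times\Pic$ then has $c_1=\sum_i\alpha_i\times\beta_i$.
\item Since $p_1(X)=3\sigma(X)=0$, we have $\hat A(X)=1$ rationally. So only $\frac{1}{24}\bigl(\sum_i\alpha_i\times\beta_i\bigr)^4$ survives integration over $X$.
\item This gives $\mathrm{ch}(\ind D_\mfraks)=\pm\det X\cdot\beta_1\beta_2\beta_3\beta_4$.
\item Hence $c_1(\ind D_\mfraks)=0$ and $s_2=-c_2=\mathrm{ch}_2=\pm\det X$, which is odd.
\end{enumerate}

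Note that $\det X$ enters through $c_2$ (equivalently $\mathrm{ch}_2$), not through $c_1$ as your last sentence says; $c_1$ of the index bundle vanishes. The computation you sketch at the end (reducibles over $\Pic$, degree of the index bundle) is the right one. But it is the wall-crossing formula applied to $c(\mscrF_b)$ itself, not a computation of the ordinary invariant.
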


\begin{proof}
  The conditions (1) and (2) are obvious. The condition (3) follows from the wall-crossing formula by Li--Liu \cite[Theorem 4.10]{Li--Liu-family-wall-crossing2001} or Baraglia--Konno \cite[Theorem 1.1, Theorem 1.8]{Baraglia--Konno-2022}.
\end{proof}

\section{Construction of spin structure}\label{sec:const of spin str}

The purpose of this section is to prove the following proposition, which is in line with \cite[Theorem 4.13]{Adachi-gerbe-like-2026}.

\begin{proposition}\label{prop:spin str from FDA}
  Let
  \[
    \mscrF = (\mcalU, H^+, H^1_\Z, V_\R, W_\R, V_\C, W_\C, i, D_\R, D_\C, F)
  \]
  be a model of FDA for families of homology 4-tori with odd determinant. Assume that $\mcalU$ is a locally simply-connected paracompact Hausdorff space. Then the following statements hold:
  \begin{enumarabicp}
    \item A spin structure $\mfrakt_\mscrF$ on $H^+$ is canonically constructed from $\mscrF$.
    \item Self-isomorphisms $\pm 1$ of $\mscrF$ induces self-isomorphisms of $\mfrakt_\mscrF$ covering the identity on $H^+$. By this correspondence, $+1$ on $\mscrF$ maps to $+1$ on $\mfrakt_{\mscrF}$, and $-1$ on $\mscrF$ maps to $-1$ on $\mfrakt_{\mscrF}$.
  \end{enumarabicp}
\end{proposition}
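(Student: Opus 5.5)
The plan follows the scheme of \cite[Theorem 4.13]{Adachi-gerbe-like-2026}. Over $S(H^+)$ we will build a complex line bundle $L$ together with an anti-linear isomorphism $J\colon L\to a^\ast L$ covering the antipodal map $a$ of $S(H^+)$ and satisfying $a^\ast J\circ J=-1$. By \cite[Section 3]{Adachi-gerbe-like-2026}, since $\operatorname{rank} H^+=3$ and the degree of $L$ on each fiber $S(H^+_b)\cong S^2$ is odd, such a pair $(L,J)$ is exactly the data of a spin structure on $H^+$. Concretely, the frame bundle of $\mfrakt_\mscrF$ is the bundle of fiberwise unitary isomorphisms $L_b\cong\mcalO(1)$ compatible with the $J$'s, modulo the appropriate equivalence.

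To produce $L$, we first take the $K$-theoretic degree of $F$ at the representative level. Fix a spin structure, i.e.\ a Clifford module $\mbfS(H^1_\R)$, on $H^1_\R$ over $\mcalU$. The zero locus of $F$ restricted to $S(H^+)\times_\mcalU B'(V)$ avoids $B_R(V_\R)\times\{0\}$ and $\partial B'_R(V)$. Pulling back the Bott/Thom complex of $W$ along $F$ and pushing forward along $B'(V)\to\Pic\to\mcalU$ gives a $U(1)$-equivariant $\Z/2$-graded family of finite-dimensional complexes on $S(H^+)$. The pushforward along $\Pic$ uses $\mbfS(H^1_\R)$ and the $V_\R$ Thom class uses $D_\R\oplus i\colon V_\R\oplus H^+\cong W_\R$. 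Next we take its $U(1)$-fixed or $U(1)$-quotient part, as in \cref{rem:wcn}. Finally we take the determinant line $L=\det(\cdot)$, canonically defined as a line bundle since determinants of complexes are insensitive to choices up to canonical isomorphism.

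The $j$-action of $Pin(2)$ on $V_\C,W_\C$ is anti-linear and covers the antipodal map on $\Pic$. The antipodal map of $S(H^+)$ appears because $j$ acts by $-1$ on $H^+$ via the perturbation $\Delta$. Together these give the anti-linear $J$. We must check $J^2=-1$ and not $+1$. For this we use that $j^2=-1$ acts with a sign on the determinant governed by the parity of the fiberwise index. That parity is computed by $c(\mscrF_b)$, which is odd by \cref{def:model of FDA homology tori}(3), and this is exactly where the odd determinant hypothesis enters. Independence of $L$ from the finite-dimensional data, namely $R$ and the enlargement of $W$, follows by the usual stabilization argument. For stabilization by a quaternionic $W_\C$-summand the determinant changes by a canonically trivialized factor. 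Since $\mcalU$ is locally simply connected and paracompact, we can glue the local constructions.

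The main obstacle will be the dependence on the auxiliary spin structure of $H^1_\R$, which is not canonical. Two choices differ by a real line bundle $\ell$ with $\ell^{\otimes2}$ trivialized, or by the sign ambiguity $\pm1$ on $\mbfS(H^1_\R)$. We must show that the induced isomorphism of the pairs $(L,J)$ is canonical, i.e.\ that the sign ambiguity cancels. The first thing to try is to note that $-1$ on $\mbfS(H^1_\R)$ rescales the pushforward by $-1$ on a complex whose determinant rank has even parity. Rank $H^1=4$ should give an even exponent, so the induced map on $L$ is trivial. Part (2) is then handled by direct computation. The automorphism $-1\in Pin(2)$ acts on $V_\C,W_\C$ by $-1$, hence on $L$ by $(-1)^{c(\mscrF_b)}=-1$, while $+1$ acts trivially. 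Under the correspondence of \cite[Section 3]{Adachi-gerbe-like-2026}, these automorphisms go to $\pm1\in Spin(3)$ acting on $\mfrakt_\mscrF$.
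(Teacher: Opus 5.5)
Your outline agrees with the paper's: a determinant line of a representative-level $K$-theoretic degree over $S(H^+)$, an anti-linear $J$ induced by $j$, and then the rank-$3$ correspondence of \cite[Section 3]{Adachi-gerbe-like-2026}. The gap is the decisive step $J^2=-1$, which you do not justify. You argue that $j^2$ acts on $L$ by a sign governed by the parity of the fiberwise index, and that ``this parity is computed by $c(\mscrF_b)$''. These are different invariants. The scalar by which a fiberwise automorphism acts on a determinant line is read off pointwise from virtual ranks. By contrast, $c(\mscrF_b)$ is the degree of $L$ on $S(H^+_b)\cong S^2$, and nothing in \cref{def:model of FDA homology tori} relates the two directly. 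In the paper's construction, $j^2$ acts with opposite signs on the $\underline{\C}$- and $\mcalO(1)$-summands of \eqref{eq:S_C}. Their indices are $\pm$ the ordinary Seiberg--Witten count, and the oddness of that count is the Ruberman--Strle theorem, not hypothesis (3).

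The missing idea is the topological constraint \cref{prop:top restr for anti-lin map}. Since $J^2$ covers the identity and $J^4=1$, $J^2$ is a constant $\pm1$. Any anti-linear lift of the antipodal map of $S^2$ to $L$ with square $\pm1$ has square $(-1)^{\deg L}$. To see this, write $J=fJ_0$ for a model $J_0$ and take a logarithm of $f$ on the simply connected $S^2$; the constant $f(-x)\overline{f(x)}$ is then forced to be $+1$. With $\deg L=c(\mscrF_b)$ from the families index theorem, this gives $J^2=-1$. Part (2) follows because $-1$ on $\mscrF$ is the action of $j^2$ and so induces $J^2$. Your formula $(-1)^{c(\mscrF_b)}$ for this action holds only a posteriori, via this lemma.

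The same conflation makes your sign bookkeeping inconsistent. For $J^2=-1$ you need the relevant index to be odd, while for independence of $\mfraku$ you need the ``determinant rank'' to be even. If $j^2$ and $-1$ on $\mbfS(H^1_\R)$ both acted as the scalar $-1$ on a single complex, these two requirements would contradict each other. In the paper, $-1$ on the $H^1_\R$-spinors is a global sign, so it acts on the determinant by $(-1)^{\text{virtual rank}}=(-1)^0=1$. The virtual rank vanishes because of the graded coefficient $\underline{\C}\oplus\mcalO(1)$, the $K$-theoretic lift of the factor $c_1(\mcalO(1))$ in \cref{rem:wcn}; it has nothing to do with $\operatorname{rank}H^1_\R=4$. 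Your ``$U(1)$-fixed or $U(1)$-quotient part'' must be pinned down to exactly this choice. The $U(1)$-invariant part alone has virtual rank $\pm$ the (odd) Seiberg--Witten count, so $-1$ on $\mfraku$ would act by $-1$ on $L$. Moreover, $-1\in U(1)$ acts trivially on that part.
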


The basic strategy for the proof of \cref{prop:spin str from FDA} is the same as that of \cite[Theorem 4.13]{Adachi-gerbe-like-2026}: construct a triple for spin structure on $H^+$. Here, a triple for spin structure on a rank 3 vector bundle $H^+$ over $\mcalU$ is the triple
\[
  (H^+, L, \tiota)
\]
satisfying the following properties:
\begin{enumarabicp}
  \item $L$ is a complex line bundle over $S(H^+)$, and when restricted to the fiber $S(H^+_b)$ at $b \in \mcalU$, $c_1(L_b)$ is an odd multiple of the generator of $H^2(S(H^+_b); \Z)$.
  \item The map $\map{\tiota}{L}{L}$ is an anti-linear isomorphism that covers the antipodal map $\iota$ on $S(H^+)$. Further, $\tiota^2 = -1$.
\end{enumarabicp}

\begin{remark}
  In \cite[Section 3]{Adachi-gerbe-like-2026}, the definition of the triple for spin structure is slightly different. There, $c_1(L_b)$ should be the positive generator of $S(H^+_b)$. However, as mentioned in \cite[Remark 3.9]{Adachi-gerbe-like-2026}, the whole results in that section hold for $L$ with $c_1(L_b)$ an odd multiple of the generator.
\end{remark}

Following \cite[Subsection 4.3]{Adachi-gerbe-like-2026}, we construct a triple for spin structure as follows: take the $K$-theoretic degree of $\mcalF^\prime$ in \cref{rem:wcn}, and then take the determinant line bundle. The anti-linear $\Z/4$-action $\tiota$ comes from the action by $j \in Pin(2)$.

One difference from \cite[Subsection 4.3]{Adachi-gerbe-like-2026} is that the first Betti number of homology 4-tori is 4, not zero. Thus in order to take the $K$-theoretic degree of $\mcalF^\prime$, we have to make a choice of the spin structure on $H^1_\R$. It seems that there is no canonical choice of a spin structure on $H^1_\R$. However, we can prove the following result.

\begin{proposition}\label{prop:spin for H^1 is not relevant}
  Let
  \[
    \mscrF = (\mcalU, H^+, H^1_\Z, V_\R, W_\R, V_\C, W_\C, i, D_\R, D_\C, F)
  \]
  be a model of FDA for families of homology 4-tori with odd determinant. Assume that $\mcalU$ is a locally simply-connected paracompact Hausdorff space. Take a spin structure $\mfraku$ on $H^1_\R$ arbitrarily. Then the following statement holds:
  \begin{enumarabicp}
    \item A spin structure $\mfrakt_\mfraku$ on $H^+$ is canonically constructed.
    \item The construction of $\mfrakt_\mfraku$ is functorial with respect to $\mfraku$: if $\mfraku^\prime$ is another spin structure on $H^1_\R$ and $\map{\varphi}{\mfraku}{\mfraku^\prime}$ is an isomorphism, then $\varphi$ induces an isomorphism between $\mfrakt_\mfraku$ and $\mfrakt_{\mfraku^\prime}$. Furthermore, this construction is compatible with the composition of $\varphi$'s.
    \item Let $\pm 1$ be the self-isomorphisms of $\mfrakt_\mfraku$. Then the induced map of both of them is equal to the identity on $\mfrakt_\mfraku$.
  \end{enumarabicp}
\end{proposition}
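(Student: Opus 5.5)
We will follow the construction of \cite[Subsection 4.3]{Adachi-gerbe-like-2026}, inserting the spin structure $\mfraku$ on $H^1_\R$ wherever the pushforward along $\Pic \to \mcalU$ is needed. First, the fiberwise tangent bundle of the torus bundle $\Pic \to \mcalU$ is canonically $\rho^\ast H^1_\R$, so $\mfraku$ gives a fiberwise spin structure on $\Pic$. Combined with the complex structures on $V_\C$, $W_\C$ and the $U(1)$-quotient, $\mfraku$ gives a $K$-orientation of the fibers of $\mcalV^\prime \to S(H^+)$ relative to $\mcalW$. We then form the $K$-theoretic degree of $\mcalF^\prime$ in \cref{rem:wcn}: pull back the $K$-theoretic Thom class of $\mcalW$, twist by $\mcalO(1)$, and push forward along $\mcalV^\prime \to S(H^+)$ using $\mfraku$. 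The key point is to do this at the representative level, so that we get an explicit $\Z/2$-graded family of Fredholm operators or a finite-dimensional complex of vector bundles over $S(H^+)$, not only a $K$-class. Its determinant line $L_\mfraku$ is then a well-defined complex line bundle over $S(H^+)$. By the Atiyah--Singer/Grothendieck--Riemann--Roch compatibility of the $K$-theoretic and cohomological pushforwards, $c_1(L_\mfraku|_{S(H^+_b)})$ equals $c(\mscrF_b)$. Since the torus fiber of $\Pic$ has $\hat A=1$ fiberwise, no correction term appears, and this class is odd by \cref{def:model of FDA homology tori}.

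Next we construct the anti-linear map $\tiota$ on $L_\mfraku$ from the $j$-action. Since $j$ covers the antipodal map $-1$ on $\Pic$, we need a lift of $-1 \colon H^1_\R \to H^1_\R$ to $\mfraku$ in order to make the pushforward $j$-equivariant. On the rank-$4$ bundle, $-1 \in SO(4)$ lifts to the two elements $\pm\omega$ of $Spin(4)$, where $\omega$ is the volume element. Each is canonical up to this sign and commutes with $Spin(4)$, so each is an automorphism of any spin structure. We choose one, say $\omega$, and use it together with $j$ on the spinors to obtain $\tiota$ on the determinant line. Replacing $\omega$ by $-\omega$ changes $\tiota$ by the sign $(-1)^{\mathrm{index}}$ acting on $\det$. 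We check that $\tiota^2 = -1$: on the $\C$-part, $j^2=-1$ contributes $(-1)^{\text{odd}}$, matching the $K3$ case, and $\omega^2 = +1$ in even dimension $4$ because $\omega^2=(-1)^{n(n+1)/2}$ with $n=4$. If the sign comes out wrong, the factor is corrected by composing with $\sqrt{-1}$, and the resulting ambiguity is absorbed as in \cite[Section 3]{Adachi-gerbe-like-2026}. The triple $(H^+, L_\mfraku, \tiota)$ then yields $\mfrakt_\mfraku$ by \cite[Section 3]{Adachi-gerbe-like-2026}, proving (1), provided the choice of $\pm\omega$ does not matter. That follows because $(L,\tiota)$ and $(L,-\tiota)$ define canonically isomorphic spin structures via multiplication by $i$ on $L$, which conjugates $\tiota$ to $-\tiota$ since $\tiota$ is anti-linear.

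For (2), an isomorphism $\varphi \colon \mfraku \to \mfraku^\prime$ of spin structures induces an isomorphism of the pushforward representatives, since the representative-level construction is natural in the spinor bundle of $H^1_\R$. Hence it induces an isomorphism $L_\mfraku \cong L_{\mfraku^\prime}$. Because $\varphi$ is $Spin(4)$-equivariant, it commutes with $\omega$, hence intertwines $\tiota$, and so gives an isomorphism of triples and therefore of spin structures. Compatibility with composition is immediate from naturality. For (3), the automorphism $-1$ of $\mfraku$ acts on the spinor bundle of $H^1_\R$ by $-1$. On the determinant line of the pushforward, it acts by $(-1)^{r}$, where $r$ is the virtual rank of the relevant index bundle. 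It then suffices to show that $r$ is even, or that the action is a positive real scalar homotopic to $1$ within triple automorphisms.

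We expect this parity computation, and the precise sign bookkeeping for $\tiota^2=-1$ together with the $\pm\omega$ ambiguity, to be the main obstacle. The plan for it is to compute $r$ fiberwise over a point of $S(H^+)$. The fiber pushforward over $T^4$ of a twisted spinor bundle has index given by an integral over $T^4$ whose value is the determinant-type invariant. The determinant line of the $\mfraku$-spinor pushforward then scales by $(-1)^{\text{rank}}$ with rank $2\cdot(\cdots)$, which we would try to show is even because the spinor bundle of $H^1_\R$ has even rank $4$ and appears in both graded pieces. If a residual sign survives, we would fall back on the argument that $\pm1$ induce automorphisms of the triple covering the identity with $\tiota$ preserved, so they act on $L$ by real scalars $\pm1$. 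Since the resulting automorphism is continuous in $\mfraku$-data, comparing with the identity at the reducible locus shows that it is $+1$.
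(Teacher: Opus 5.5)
\textbf{There is a genuine gap in (3), and it traces back to your choice of twist.} You push forward $\mcalF^{\prime\ast}\tau_{\mcalW}$ twisted by the line bundle $\mcalO(1)$.

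\emph{Why (3) fails with that twist.} Over a point $h\in S(H^+)$, the virtual rank $r$ of that pushforward is $\int_{\mcalV^\prime_h}\mcalF^{\prime\ast}\tau_{\mcalW}$. This is the signed count of zeros of $\mcalF^\prime$ over $h$, i.e.\ the unparametrized Seiberg--Witten invariant, which is odd for homology tori with odd determinant. The automorphism $-1$ of $\mfraku$ acts by $-1$ on the whole spinor bundle, so it acts on the determinant line by $(-1)^r=-1$, and (3) would be false.

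\emph{Why no parity argument can rescue it.} If, as in your bookkeeping, $\tau^2$ is a single scalar $\epsilon$ on $\mbfS$, then $\tiota^2=\epsilon^r$ while $-1$ acts by $(-1)^r$. So $\tiota^2=-1$ needs $r$ odd, but (3) needs $r$ even: your two sign computations pull in opposite directions.

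\emph{The missing idea.} The twist must be the $\Z/2$-graded bundle $\underline\C\oplus\mcalO(1)$ of \cref{eq:S_C}, i.e.\ the rank-zero class $1-[\mcalO(1)]$.
\begin{itemize}
\item Since $\mathrm{ch}(1-[\mcalO(1)])$ has no degree-zero term, the index theorem gives virtual rank $0$ for $\mcalD+t\mbfh$. Hence $-1$ on $\mbfS$ induces $(-1)^0=1$ on $\det(\mcalD+t\mbfh)$. That is the paper's entire proof of (3).
\item Meanwhile $\tau^2$ is $+1$ on one summand and $-1$ on the other, so $\tiota^2$ only sees the odd index of a single summand.
\item The same rank-zero property is why $c_1$ of the determinant line is exactly $\pm c(\mscrF_b)$, with no Todd-class contribution from the projective-space fibres. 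The reason is not $\hat A(T^4)=1$.
\end{itemize}

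\emph{Your proposed fixes for (3) do not work.} The evenness you hope to get from the rank-$4$ spinor bundle of $H^1_\R$ is false here: the $\underline\C$- and $\mcalO(1)$-pieces each have odd index, and only their graded difference vanishes. The fallback via continuity and the reducible locus has no content. The maps $\pm1$ are discrete automorphisms, so no deformation connects them, and there is no reducible locus over $S(H^+)$ to compare with.

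\textbf{Smaller issues in (1).}
\begin{itemize}
\item Multiplying an anti-linear $\tiota$ by $\sqrt{-1}$ does not change its square, since $(i\tiota)^2=i\,\bar{i}\,\tiota^2=\tiota^2$, so that correction is void. No bookkeeping is needed: $\tiota^2=\pm1$, and \cref{prop:top restr for anti-lin map} forces $\tiota^2=(-1)^{c_1(L)}=-1$ because $c_1(L)$ is odd.
\item $\mbfS$ is a complex tensor product whose other factors carry anti-linear $j$-actions. So the $H^1_\R$-spinor factor needs the anti-linear map $\tau_E=C\,c(\omega)$ of \cref{prop:spinor with anti-lin involution}, not the linear lift $\omega$ alone.
\item The sign ambiguity of $\tau_E$ is harmless for the same zero-rank reason: replacing $\tau_E$ by $-\tau_E$ changes $\tiota$ by $(-1)^0=1$. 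By contrast, your identification of $(L,\tiota)$ with $(L,-\tiota)$ via multiplication by $i$ depends on choosing $\pm i$, i.e.\ it is defined only up to the nontrivial automorphism, so it is not canonical.
\end{itemize}
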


The last statement of \cref{prop:spin for H^1 is not relevant} implies that the spin structures for different choices of $H^1_\R$ are glued together to give a canonical spin structure on $H^+$. Thus the resulting spin structure is independent of the choice of $\mfraku$.

In order to construct $\tiota$, we have to construct an anti-linear isomorphism on the spinor bundle for $H^1_\R$. This is accomplished in \cref{app:spin rep and anti-lin inv}.

In the following, we will prove \cref{prop:spin for H^1 is not relevant} and \cref{prop:spin str from FDA} under the assumption that $\mcalU$ is a point for notational simplicity. Modifications needed in the case of $\mcalU$ not necessarily a point is described in \cite[Section 4.4]{Adachi-gerbe-like-2026}.

Let
\[
  \mscrF = (\pt, H^+, H^1_\Z, V_\R, W_\R, V_\C, W_\C, i, D_\R, D_\C, F)
\]
be a model of FDA for families of homology 4-tori with odd determinant. As in \cref{rem:wcn}, we modify $F$ to a $\Z/2$-equivariant map
\[
  \map{F^\prime}{\mcalV^\prime}{\mcalW}.
\]
The $\Z/2$-action is defined by $j \in Pin(2)$. By using a $C^\infty$ function
\begin{align}\label{eq:rho}
  \map{f}{[0, \infty)}{[0, 1]}
\end{align}
which is identity near 0 and 1 for $s > 1$, we define
\[
  \map{\mcalF}{\mcalV}{\mcalW}
\]
by
\[
  \mcalF(h, [v_\C], v_\R, v_+) = \mcalF^\prime\parenlr{h, [v_\C], f(\abs{v_\R}) \frac{v_\R}{\abs{v_\R}}, f(\abs{v_+}) \frac{v_+}{\abs{v_+}}},
\]
where
\[
  \mcalV = S(H^+) \times (S(V_\C) / U(1) \times_{\Pic} V_\R) \times \R^+.
\]
(The symbol $\R^+$ indicates a 1-dimensional real vector space with a trivial $\Z/2$-action.)

We construct a family of Clifford bundles
\begin{align}\label{eq:Clifford bundle}
  (\mbfS, c, \nabla).
\end{align}
This splits into three parts
\[
  (S_{H^+}, c_{H^+}, \nabla_{H^+}),\ (S_\R, c_\R, \nabla_\R),\ (S_\C, c_\C, \nabla_\C).
\]
The resulting family of Clifford bundles is given by
\begin{align*}
  \mbfS  & = S_{H^+} \boxtimes S_\R \boxtimes S_\C,                                                                           \\
  c      & = c_{H^+} \otimes 1 \otimes 1 + \varepsilon \otimes c_\R \otimes 1 + \varepsilon \otimes \varepsilon \otimes c_\C, \\
  \nabla & = \nabla_{H^+} \otimes 1 \otimes 1 + 1 \otimes \nabla_\R \otimes 1 + 1 \otimes 1 \otimes \nabla_\C.
\end{align*}
Here $\varepsilon$ denotes the involution representing the $\Z/2$-grading. Before constructing the above, we must choose auxiliary data: a spin structure $\mfraku$ on $H^1_\R$ and a $\Z/2$-equivariant section
\begin{align}\label{eq:splitting}
  \map{s}{\pi_\C^\ast T \Pic}{T (S(V_\C) / U(1))}
\end{align}
of an exact sequence
\[
  0 \to T_{\Pic} (S(V_\C) / U(1)) \to T (S(V_\C) / U(1)) \to \pi_\C^\ast T \Pic \to 0,
\]
where
\[
  \map{\pi_\C}{S(V_\C) / U(1)}{\Pic}
\]
is the natural projection and
\[
  T_{\Pic} (S(V_\C) / U(1))
\]
is the tangent bundle along fibers. This data is necessary to construct the Clifford actions and the connections on $S_\C$ and $S_{\Pic}$.

First we define
\[
  S_{H^+} = \underline \C \oplus T S(H^+).
\]
(Note that $S(H^+)$ is an oriented 2-dimensional manifold, and so it admits a natural complex structure.) The family of Clifford multiplications and connections is uniquely determined, since
\[
  T_{S(H^+)} S(H^+),
\]
the tangent bundle along fibers, is a 0-dimensional vector bundle. Second, we define
\[
  S_\R = (V_\R \times \R^+) \times (\Lambda_\R^\ast V_\R \otimes_\R \C) \times (\Lambda_\R^\ast \R^+ \otimes_\R \C).
\]
The Clifford multiplication by $(v_\R, v_+) \in T (V_\R \times \R^+)$ is given by
\[
  (v_\R^\wedge - v_\R^\lrcorner) \otimes 1 + \varepsilon \otimes (v_+^\wedge - v_+^\lrcorner).
\]
Here $\varepsilon$ denotes the involution representing the $\Z/2$-grading. The connection $\nabla_\R$ is the Levi-Civita connection, or simply the trivial connection. Third, we define
\begin{align}\label{eq:S_C}
  S_\C & = (\underline \C \oplus \mcalO(1)) \otimes \Lambda_\C^\ast T_{\Pic} (S(V_\C)/U(1)) \otimes \pi_{\Pic}^\ast (S_{H^1_\R}^+ \oplus S_{H^1_\R}^-),
\end{align}
where $T_{\Pic} (S(V_\C)/U(1))$ is the tangent bundle along the fibers, $\map{\pi_{S(V_\C)}}{S(V_\C)}{\pt}$ is a projection and $S_{H^1_\R}^{\pm}$ is the spinor bundle defined in \cref{prop:spinor with anti-lin involution}. The Clifford multiplication by $v_\C \in T_{\Pic} (S(V_\C)/U(1))$ and $v_{\Pic} \in T \Pic$ is given by
\[
  \varepsilon \otimes (v_\C^\wedge - v_\C^\lrcorner),\ c_{H^1_\R}(v_{\Pic}).
\]
Together with the fixed splitting $s$, these give a Clifford multiplication by an element in $T (S(V_\C)/U(1))$ on $S_\C \boxtimes S_{\Pic}$. The connection $\nabla_\C$ is constructed from
\begin{itemize}
  \item the canonical connection on the fibers of $S(V_\C) \to \Pic$,
  \item and the trivial connection on $\Pic$.
\end{itemize}
(Note that the Fubini--Study metric is invariant under the action of $U(V_\C)$.) This finishes the definition of $(\mbfS, c, \nabla)$.

In order to construct a family of Fredholm operators over $S(H^+)$, we have to construct a family of Hermitian maps
\begin{align}\label{eq:h}
  \mbfh \in \Gamma(\mcalV; \mbfS)
\end{align}
with compact support. Again we construct
\[
  h_{H^+} \in \Gamma(\mcalV; S_{H^+}),\ h_\R \in \Gamma(\mcalV; S_\R),\ h_\C \in \Gamma(\mcalV; S_\C)
\]
respectively, and $\mbfh$ is defined as
\[
  \mbfh = h_{H^+} \otimes 1 \otimes 1 + \varepsilon \otimes h_\R \otimes 1 + \varepsilon \otimes \varepsilon \otimes h_\C.
\]
Here, by $\Gamma(\mcalV; S_{H^+})$ (resp. $\Gamma(\mcalV; S_\R)$ and $\Gamma(\mcalV; S_\C))$ we mean the section of $S_{H^+}$ (resp. $S_\R$ and $S_\C$) pulled back to $\mcalV$.

To define $h_{H^+}$, $h_\R$ and $h_\C$, we first fix an element $v$ of $\mcalV$. By sending $v$ by $\mcalF$, we obtain $\mcalF(v) \in \mcalW$. Noting that there is a canonical isomorphism
\[
  S(H^+) \times W_\R \cong T S(H^+) \oplus (S(H^+) \times \R^+) \oplus (S(H^+) \times V_\R),
\]
the element $w \in \mcalW$ splits into the three parts: $w_{H^+} \in T S(H^+)$, $w_\R \in \R^+ \times V_\R$ and $w_\C \in (S(V_\C) \times_{\Pic} W_\C)$.

Bearing this in mind, we define

\begin{align*}
  h_{H^+}(v) & = i(\mcalF(v)_{H^+}^\wedge - \mcalF(v)_{H^+}^\lrcorner),                         \\
  h_\R(v)    & = \mcalF(v)_\R^\wedge + \mcalF(v)_\R^\lrcorner,                                  \\
  h_\C(v)    & = \varepsilon \otimes i(\mcalF(v)_\C^\wedge - \mcalF(v)_\C^\lrcorner) \otimes 1.
\end{align*}

Finally, the $\Z/4$-actions
\begin{align*}
   & \map{\tau_{H^+}}{S_{H^+}}{S_{H^+}}, \\
   & \map{\tau_\R}{S_\R}{S_\R},          \\
   & \map{\tau_\C}{S_\C}{S_\C}
\end{align*} are defined by the action of $j \in Pin(2)$. (Note that $j$ acts on $S(V_\C) \times \C$ by $(v_\C, z) \cdot j = (v_\C \cdot j, \bar{z})$.) The $\Z/4$-action $\tau$ on $\mbfS$ is defined by
\begin{align}\label{eq:tau}
  \tau = \tau_{H^+} \otimes \tau_\R \otimes \tau_\C.
\end{align}

Let $\mcalD$ denote the family of the Dirac operators constructed from $(\mbfS, c, \nabla)$. From construction $\mcalD$ and $\tau$ is commutative. We have the following proposition.

\begin{proposition}\label{prop:const of spin str}
  For sufficiently large $t > 0$, the operator
  \[
    \mcalD + t\mbfh
  \]
  is a family of Fredholm operators. The map $\tau$ induces a $\Z/4$-action $\tiota$ on the determinant line bundle
  \[
    \det(\mcalD + t\mbfh).
  \]
  The triple $(H^+, \det(\mcalD + t\mbfh), \tiota)$ is a triple for spin structure on $H^+$. (For the definition of triples for spin structure, see \cite[Definition 3.2]{Adachi-gerbe-like-2026}.)
\end{proposition}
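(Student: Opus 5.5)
We establish three things in turn: the Fredholm property, the $\Z/4$-action on the determinant line, and the two axioms of a triple for spin structure. Throughout we follow \cite[Subsection 4.3]{Adachi-gerbe-like-2026}, modifying only where $\Pic$ and $S_{H^1_\R}^\pm$ enter.

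\textbf{Step 1: Fredholm property.} We use the standard Callias/Anghel criterion. The operator $\mcalD$ is a family of Dirac operators on the noncompact manifold $\mcalV$ fibred over $S(H^+)$; the base $\Pic$ and the fibres $S(V_\C)/U(1)$ are compact, so noncompactness comes only from the $V_\R\times\R^+$ directions. For each generator of $c$ we check the sign convention, i.e.\ that $\mbfh$ is self-adjoint and anticommutes with the Clifford multiplication up to a zeroth-order term. This holds because each of $h_{H^+}, h_\R, h_\C$ anticommutes with the corresponding $c$, and the $\varepsilon$-twists make the cross terms anticommute. Consequently $(\mcalD+t\mbfh)^2 = \mcalD^2 + t^2\mbfh^2 + t[\mcalD,\mbfh]_+$, where the last term is a bundle endomorphism built from $\nabla\mcalF$. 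By construction $\mcalF$ is the constant extension of $\mcalF'$ outside the unit balls (the cutoff $f$ is $1$ for $s>1$). The nonvanishing property of $F$ in \cref{def:model of FDA} gives $\abs{\mcalF}\ge c>0$ outside a compact set. More precisely:
\begin{itemize}
\item in the $\R^+$ and $V_\R$ directions, $h_\R$ grows like $\abs{v}$ near infinity;
\item $h_{H^+}$ and $h_\C$ supply invertibility on the remaining pieces.
\end{itemize}
We therefore need to rescale $\mbfh$ in the $V_\R\times\R^+$ directions so that it is bounded below by a positive constant at infinity. For large $t$ the term $t^2\mbfh^2$ then dominates $t\abs{\nabla\mbfh}$ (which is bounded), and Fredholmness follows.

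\textbf{Step 2: the $\Z/4$-action.} The $j$-action on $Pin(2)$-equivariant data preserves $S(V_\C)$ and covers the antipodal map on $\Pic$ and on $S(H^+)$. Each factor $\tau_{H^+},\tau_\R,\tau_\C$ is anti-linear, and $\tau_\C$ includes the anti-linear map on $S_{H^1_\R}^\pm$ from \cref{prop:spinor with anti-lin involution}. Hence $\tau$ is anti-linear and covers $\iota$. We choose the splitting $s$ to be $\Z/2$-equivariant, as stipulated. With that choice:
\begin{itemize}
\item $\tau$ commutes with $c$ and $\nabla$, hence with $\mcalD$;
\item $\tau$ commutes with $\mbfh$, because $\mcalF$ is $\Z/2$-equivariant.
\end{itemize}
Thus $\tau$ preserves kernels and cokernels and induces an anti-linear $\tiota$ on $\det(\mcalD+t\mbfh)$ covering $\iota$. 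To compute $\tiota^2$ we note that $\tau^2$ is a scalar $\pm1$ on each factor. The parity of the induced action on the determinant then equals the sign of $\tau^2$ raised to the index rank, corrected by the grading factors. We reduce this to a computation of $\tau^2$ on $\ker$ and $\operatorname{coker}$ in the deformed-to-localized model, as in \cite{Adachi-gerbe-like-2026}.

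\textbf{Step 3: the two axioms.} This is the crux. To verify that $c_1$ of the determinant line is odd on $S(H^+)$, we apply the families index theorem (or Atiyah--Singer for the localized operator). The index bundle of $\mcalD+t\mbfh$ equals the $K$-theoretic pushforward of $\mcalF^*$ of the Thom class, twisted by $\underline\C\oplus\mcalO(1)$. Its first Chern class is obtained by integrating $c_1(\mcalO(1))\cdot\mcalF'^*\tau_\mcalW$ over the fibre, modulo terms coming from the $\underline\C$ summand and the Todd/$\hat A$ corrections of the spin structure $\mfraku$ on the torus $\Pic$. We expect those corrections to vanish in degree $2$ because $\Pic$ is a flat torus. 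The result is $c(\mscrF)$, which is odd by \cref{def:model of FDA homology tori}(3). Finally, $\tiota^2=-1$ follows from a sign count. The quaternionic $j$ squares to $-1$ on $V_\C,W_\C$, and the exterior algebra factors contribute signs according to the complex ranks involved. The spinor of the $4$-dimensional $H^1_\R$ carries the structure from \cref{prop:spinor with anti-lin involution} with its own square. Whether the product is $-1$ is the step I expect to be most delicate. We verify it by restricting to a $j$-fixed point of $\Pic$, where everything becomes quaternionic plus the $H^1$ spinor contribution. There we compare with the $b_1=0$ computation of \cite{Adachi-gerbe-like-2026} and check that the $H^1$ factor contributes a trivial sign.
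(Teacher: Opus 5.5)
Your Step 1 and the $c_1$ half of Step 3 match the paper, which simply invokes the families index theorem together with condition (3) of \cref{def:model of FDA homology tori}. One small correction to Step 1: because of the cutoff $f$, $\mcalF$ is radially constant outside the unit ball, so $h_\R$ does not grow like $\abs{v}$. What the Callias argument actually uses is that $\mbfh^2=\abs{\mcalF}^2$ is bounded below off a compact set and $\nabla\mbfh$ is bounded.

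\textbf{The gap.} The genuine gap is your derivation of $\tiota^2=-1$, which no local ``sign count'' of $\tau^2$ on $\mbfS$ can decide. Suppose $\tau^2=\lambda$ on a summand of $\mbfS$ preserved by $\tau$ and by $\mcalD+t\mbfh$. Then that summand contributes $\lambda^{r}$ to $\tiota^2$, where $r$ is the virtual rank of its index.
\begin{itemize}
\item If $\tau^2$ were a single scalar on $\mbfS$, as your Step 2 suggests, you would get $\tiota^2=\lambda^{0}=1$, because the total virtual rank is $0$. This is exactly why the $H^1_\R$-spinor factor is invisible on the determinant line, even though its square is $(-1)^{\frac12 n(n-1)}=-1$ for $2n=4$ (not trivial, as you claim).
\item In fact $\tau^2$ is not scalar. $j^2=-1$ acts by $+1$ on $\underline{\C}$ and by $-1$ on $\mcalO(1)$ in \eqref{eq:S_C}. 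Hence $\tiota^2=(-1)^{r}$, where $r$ is the virtual rank of the index of either of these two summands, i.e.\ the parity of the ordinary Seiberg--Witten invariant (compare \cref{sec:mod 2 SW}).
\end{itemize}
That quantity is a global index over all of $\Pic$, which lies in the fibre rather than in the base $S(H^+)$. So it cannot be read off at a $j$-fixed point of $\Pic$, nor by comparison with the $b_1=0$ case. Moreover, its oddness is not among the axioms of \cref{def:model of FDA homology tori}; only the oddness of $c(\mscrF_b)$ is. The paper actually deduces the oddness of the Seiberg--Witten invariant \emph{from} this proposition (the Ruberman--Strle remark), so assuming it would be circular.

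\textbf{The missing idea.} What you are missing is the topological constraint the paper uses.
\begin{itemize}
\item Since $\tau^4=1$, also $\tiota^4=1$. So $\tiota^2$ is a complex-linear automorphism of the line bundle covering the identity with square $1$, hence a locally constant $\pm1$ on each $S(H^+_b)\cong S^2$.
\item By \cref{prop:top restr for anti-lin map}, any anti-linear lift of the antipodal map of $S^2$ squaring to $\pm1$ satisfies $\tiota^2=(-1)^{c_1(L)}$. The proof writes $\tiota=f\tiota_0$ with $\tiota_0^2=(-1)^{c_1(L)}$ and lifts $\arg f$ over the simply connected $S^2$.
\item Combined with the oddness of $c_1(\det(\mcalD+t\mbfh))$, which you did obtain, this forces $\tiota^2=-1$ with no sign bookkeeping at all.
\end{itemize}
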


\begin{proof}
  The families index theorem and the assumption in \cref{def:model of FDA homology tori} shows that
  \[
    c_1(\det(\mcalD + t\mbfh))
  \]
  is an odd multiple of the generator. Since $\tiota^4 = 1$, we have $\tiota^2 = \pm 1$. However, the fact that $c_1(\det(\mcalD + t\mbfh))$ is odd forces $\tiota^2 = -1$.
\end{proof}

\begin{proof}[Proof of \cref{prop:spin for H^1 is not relevant}]
  The statement in (1) is already proved, and the statement in (2) follows easily from the construction. For (3), by \cref{prop:spinor with anti-lin involution} we first note that $-1$, which is the isomorphism of $\mfrakt_\mfraku$, acts on $S_{H^1_\R}^\pm$ as the multiplication by $-1$ and thus on $\mbfS$ by $-1$. On the other hand, the virtual rank of $\mcalD + t\mbfh$ is 0 by the index theorem. This shows that the multiplication by $-1$ on $\mbfS$ gives a self-isomorphism
  \[
    \map{(-1)^0 = 1}{\det(\mcalD + t\mbfh)}{\det(\mcalD + t\mbfh)}.
  \]
  This finishes the proof of (3).
\end{proof}

\begin{definition}\label{def:spin str}
  We denote the spin structure on $H^+$ constructed in \cref{prop:const of spin str} by $\mfrakt_{\mscrF}$.
\end{definition}

\begin{remark}
  The above argument actually shows that the Seiberg--Witten invariants for spin structures on homology 4-tori with odd determinant are odd, which is first proved by Ruberman--Strle \cite{Ruberman--Strle-mod2-SW-homology-tori-2000}. More generally, as we will see in \cref{sec:mod 2 SW}, a similar construction computes the mod 2 Seiberg--Witten invariants for spin structures on closed 4-manifolds with $b^+ \geq 3$, which recovers part of the result by Baraglia \cite{baraglia-mod-2023-arxiv}.
\end{remark}

The following proposition states that the spin structure $\mfrakt_{\mscrF}$ does not depend on the various choices of auxiliary data other than the model of FDA $\mscrF$.

\begin{proposition}\label{prop:indep of auxiliary data}
  The spin structure $\mfrakt_{\mscrF}$ on $H^+$ does not depend on the choices of the following data:
  \begin{itemize}
    \item the smooth function $f$ in \cref{eq:rho},
    \item the splitting $s$ in \cref{eq:splitting}, and
    \item the sufficiently large number $t > 0$.
  \end{itemize}
  More precisely, if $(f_0, s_0, t_0)$, $(f_1, s_1, t_1)$ are triples and $\mfrakt_{\mscrF, 0}$, $\mfrakt_{\mscrF, 1}$ are spin structures constructed from these, then there is a canonical isomorphism
  \[
    \map{\Phi_{10}}{\mfrakt_{\mscrF, 0}}{\mfrakt_{\mscrF, 1}},
  \]
  and if $(f_2, s_2, t_2)$ is a third triple and $\mfrakt_{\mscrF, 2}$ is the spin structure constructed from that, then we have
  \[
    \Phi_{20} = \Phi_{21} \circ \Phi_{10}.
  \]
\end{proposition}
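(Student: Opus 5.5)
The plan is the standard homotopy argument for determinant line bundles: join any two choices of auxiliary data by a path, build the Dirac-type family over the parameter interval, and transport the anti-linear isomorphism along it. Concretely, given two triples $(f_0, s_0, t_0)$ and $(f_1, s_1, t_1)$, I interpolate linearly: $f_u = (1-u) f_0 + u f_1$ and $s_u = (1-u)s_0 + u s_1$. Both spaces of choices are convex. The $f_u$ remain identity near $0$ and equal to $1$ for $s > 1$, and the $s_u$ remain $\Z/2$-equivariant sections of the exact sequence, because splittings form an affine space modelled on $\Hom(\pi_\C^\ast T\Pic, T_{\Pic}(S(V_\C)/U(1)))$ and averaging preserves equivariance. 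For $t$ I take a path $t_u$ staying above a uniform threshold.

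The first step is a uniform Fredholmness estimate over $u \in [0,1]$. The relevant fact is that $\mbfh_u$ is invertible off a compact set, with a uniform lower bound there, since $\mcalF_u$ does not vanish on the boundary region by \cref{def:model of FDA}. The commutator $[\mcalD_u, \mbfh_u]$ is bounded uniformly in $u$, being controlled by derivatives of $f_u$, $s_u$ and $\mcalF$. So the usual argument shows $\mcalD_u + t\mbfh_u$ is Fredholm for all $t \geq T$, with $T$ independent of $u$.

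This produces a family of Fredholm operators over $S(H^+) \times [0,1]$, commuting with the $\Z/4$-action $\tau_u$ built as in \cref{eq:tau}. The family therefore carries a determinant line bundle $L$ with anti-linear $\tiota$ restricting to the two given ends. Parallel transport along $[0,1]$, using any connection on $L$ made $\tiota$-invariant by averaging over $\Z/4$, gives an isomorphism of triples $(H^+, L_0, \tiota_0) \cong (H^+, L_1, \tiota_1)$ covering the identity on $S(H^+)$. Actually no connection is needed. Since $[0,1]$ is contractible, $L \cong \mathrm{pr}^\ast L_0$, and the identification is unique up to a map $S(H^+) \times [0,1] \to \C^\ast$ that is constant in $u$. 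Requiring compatibility with $\tiota$ cuts the ambiguity further: over each $b$ it must be $\tiota$-equivariant, so it is real-valued on the connected sphere $S(H^+_b)$. By \cite[Section 3]{Adachi-gerbe-like-2026}, isomorphisms of triples induce isomorphisms of spin structures, and the scalar ambiguity of the triple isomorphism maps to $\pm 1$ in the spin structure. Homotopy-invariance of the restriction pins the sign down. This defines $\Phi_{10}$.

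The key point, and the main obstacle, is \emph{canonicity} and the cocycle condition $\Phi_{20} = \Phi_{21}\circ\Phi_{10}$. This follows from the same construction over a $2$-simplex. The space of all admissible triples $(f, s, t)$ is contractible: it is a product of convex sets and the contractible ray $t \geq T$, once $T$ is chosen uniformly over the convex hull, which is done by the same estimate applied on the simplex. So the determinant line bundle with its $\Z/4$-action extends over the simplex, and the isomorphisms of spin structures along the three edges compose correctly by the homotopy argument. The remaining issue is that the induced map on spin structures depends only on the homotopy class of the path of data. A spin structure isomorphism covering the identity is locally constant among $\{\pm1\}$, and $\pm 1$ are not homotopic through isomorphisms. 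Hence $\Phi_{10}$ is independent of the chosen path, and the cocycle identity follows. The subtle part to check carefully is the uniform large-$t$ estimate over the parameter space, including the case $t_0 \neq t_1$, which I would handle by first deforming $t$ alone with $f$ and $s$ fixed.
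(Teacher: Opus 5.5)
Your argument is essentially the paper's: the paper's proof is just a citation of \cite[Proposition 3.12]{Adachi-gerbe-like-2026}, which says a family of triples for spin structure over a simply-connected parameter space yields canonically and coherently isomorphic spin structures, and your path/2-simplex construction over the contractible space of data $(f,s,t)$ reproves that statement by hand. One slip to fix: a $\tiota$-compatible scalar automorphism $g$ of $L$ over $S(H^+_b)$ need not be real-valued, since it only satisfies $g(-x)=\overline{g(x)}$, and likewise the trivialization $L\cong \mathrm{pr}^\ast L_0$ is not unique up to $u$-independent scalars; but such a $g$ is homotopic through maps of the same kind to $\pm 1$, so the sign is still pinned down by continuity from $u=0$, exactly as your $\tiota$-invariant parallel transport already shows.
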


\begin{proof}
  This follows from \cite[Proposition 3.12]{Adachi-gerbe-like-2026}, which states that a family of triples for spin structure over some simply-connected base space gives a canonically isomorphic family of spin structures.
\end{proof}

\begin{proof}[Proof of \cref{prop:spin str from FDA}]
  The statement (1) follows from \cref{prop:const of spin str}. The spin structure is independent of the choice of spin structures on $H^1_\R$ by \cref{prop:spin for H^1 is not relevant}.

  Next we prove the statement (2). A self-isomorphism of the model of FDA $\mscrF$ gives a self-isomorphism of the family of the functional spaces on which $\mcalD + t\mbfh$ acts. It commutes with $\mcalD + t\mbfh$, and so it gives a self-isomorphism of $\det(\mcalD + t\mbfh)$. This gives a self-isomorphism of $\mfrakt_{\mscrF}$.

  For the latter statement, it suffices to show that $-1$ corresponds to $-1$. The self-isomorphism $-1$ of $\mscrF$ is equal to the action by $j^2 \in Pin(2)$, and so the induced self-isomorphism is also equal to $\tiota^2$, which is $-1$.

\end{proof}

As for the choice of the finite-dimensional approximation, we have the following proposition. We omit the proof since it is essentially the same as that of \cite[Theorem 5.1]{Adachi-gerbe-like-2026}.

\begin{proposition}\label{prop:indep of FDA}
  Let $\mscrF = (\pt, H^+, H^1_\Z, V_\R, W_\R, V_\C, W_\C, i, D_\R, D_\C, F)$ be a model of FDA for families of homology 4-tori with odd determinant over a point. Take a real vector space $V_\R^\prime$ with a metric and a complex vector bundle $W_\C^\prime$ with a metric and a $Pin(2)$-action over $\Pic$. Denote by $\mscrF^\prime$ the model of FDA obtained by taking a direct sum of $\mscrF$ and $V_\R^\prime$, $W_\C^\prime$. Then there is a canonical isomorphism
  \[
    \map{\Psi_{\mscrF^\prime \mscrF}}{\mfrakt_{\mscrF^\prime}}{\mfrakt_{\mscrF}}.
  \]
  Furthermore, this isomorphism is compatible: if $\mscrF^\pprime$ is a third model of FDA obtained by taking a direct sum of $\mscrF^\prime$, a vector space $V_\R^\pprime$ with a metric and a complex vector bundle $W_\C^\pprime$ with a metric and a $Pin(2)$-action over $\Pic$, then
  \[
    \Psi_{\mscrF^\pprime \mscrF} = \Psi_{\mscrF^\pprime \mscrF^\prime} \circ \Psi_{\mscrF^\prime \mscrF}.
  \]
\end{proposition}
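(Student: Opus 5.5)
The approach is a stabilization argument: realize the spin structures $\mfrakt_{\mscrF}$ and $\mfrakt_{\mscrF^\prime}$ as determinant line bundles of two Dirac-type families that are deformation equivalent, and transport the isomorphism along the deformation using \cite[Proposition 3.12]{Adachi-gerbe-like-2026}. This is parallel to \cite[Theorem 5.1]{Adachi-gerbe-like-2026}.

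\textbf{Step 1: compare the data.} Write $\mscrF^\prime$ with $V_\R \oplus V_\R^\prime$, $W_\R \oplus V_\R^\prime$, $V_\C \oplus W_\C^\prime$ and $W_\C \oplus W_\C^\prime$. The maps $D_\R$ and $D_\C$ are extended by the identity on the new summands, and $F^\prime = F \oplus \id$. Then the Clifford bundle $\mbfS^\prime$ for $\mscrF^\prime$ differs from $\mbfS$ in two ways:
\begin{itemize}
  \item a factor $\Lambda^\ast V_\R^\prime \otimes \C$ with the Bott-type section $v^\wedge + v^\lrcorner$;
  \item on the complex side, the base $S(V_\C)/U(1)$ is replaced by $S(V_\C \oplus W_\C^\prime)/U(1)$.
\end{itemize}

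\textbf{Step 2: handle $V_\R^\prime$.} This factor is the easy one. The Dirac operator on $V_\R^\prime$ twisted by $t(v^\wedge + v^\lrcorner)$ is a harmonic oscillator. Its kernel is one-dimensional, spanned by a Gaussian $e^{-t\abs{v}^2/2}$, and $j$ acts trivially on it. So the determinant line is unchanged, canonically up to a positive scalar, and the $\Z/4$-action $\tiota$ is preserved.

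\textbf{Step 3: handle $W_\C^\prime$.} This is the main obstacle. Set $P = S(V_\C)/U(1)$ and $P^\prime = S(V_\C \oplus W_\C^\prime)/U(1)$, both over $\Pic$. The map $F^\prime$ restricted to the normal directions of $P \subset P^\prime$ is essentially the identity $W_\C^\prime \to W_\C^\prime$; the normal bundle is $\mcalO(1) \otimes W_\C^\prime$. Away from $P$, the $W_\C^\prime$-component of $\mbfh^\prime$ is nonzero. The plan is as follows.
\begin{enumerate}
  \item Deform $\mbfh^\prime$, through Hermitian sections invertible outside a compact set and commuting with $\tau$, so that it localizes near a tubular neighbourhood of $P$. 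Keeping $\tau$-equivariance here requires the deformation to respect $j$.
  \item Apply the standard excision/localization argument for families of Dirac operators. This identifies the index bundle of $\mcalD^\prime + t\mbfh^\prime$ with that of $\mcalD + t\mbfh$, tensored with the harmonic-oscillator kernel in the normal direction.
  \item Choose the splitting $s$ and connection so that the normal Clifford module pairs $\Lambda^\ast_\C(\mcalO(1)\otimes W_\C^\prime)$ with the factor $W_\C^\prime$ of $W_\C$. Then the normal kernel is again one-dimensional and $j$-invariant.
\end{enumerate}
The difficulty is making the deformation $j$-equivariant while the $j$-action on $\Pic$ has fixed points. I would handle it the same way as the equivariant extension in \cref{lem:Kuiper}.

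\textbf{Step 4: conclude.} Since $\tiota$ is compatible along the whole deformation, \cite[Proposition 3.12]{Adachi-gerbe-like-2026} yields a canonical isomorphism $\Psi_{\mscrF^\prime\mscrF}$ of spin structures. The base here is the parameter interval of the deformation, which is contractible, together with the contractible choices of auxiliary data allowed by \cref{prop:indep of auxiliary data}.

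\textbf{Cocycle condition.} The identity $\Psi_{\mscrF^\pprime\mscrF} = \Psi_{\mscrF^\pprime\mscrF^\prime}\circ\Psi_{\mscrF^\prime\mscrF}$ follows in two steps. First, the Gaussian kernel lines multiply under direct sum. Second, the two-step and one-step deformations are connected by a two-parameter family over a simply connected base. Uniqueness in \cite[Proposition 3.12]{Adachi-gerbe-like-2026} then forces the two compositions to agree.
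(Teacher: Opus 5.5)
Your proposal is correct in outline and follows the paper's route. The paper omits the proof and defers to \cite[Theorem 5.1]{Adachi-gerbe-like-2026}. Your argument is the natural adaptation of that proof: a Gaussian kernel for the new real summand, localization of $S(V_\C \oplus W_\C^\prime)/U(1)$ onto $S(V_\C)/U(1)$ along the normal bundle $\mcalO(1) \otimes W_\C^\prime$, and canonicity and the cocycle identity supplied by \cite[Proposition 3.12]{Adachi-gerbe-like-2026}.

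One correction of emphasis: $j$-equivariance of the deformations is not the real obstacle. Averaging over the finite group generated by $j$ suffices, because splittings, metrics and connections form convex spaces. The points that actually need care are these:
\begin{enumarabicp}
  \item Before localizing, homotope away the radial cutoff $f$ and the factor $\tfrac{1+t}{2}$ in the $W_\C^\prime$-direction. That factor vanishes at the cylinder end $t=-1$, so your claim that this component is nonzero away from $S(V_\C)/U(1)$ is not literally true.
  \item Check that the normal kernel line over $S(V_\C)/U(1)$ is canonically trivial, with a positive generator, rather than a twist by $\det_\C(\mcalO(1) \otimes W_\C^\prime)$.
\end{enumarabicp}
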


\section{Proof of the main theorems}\label{sec:proof of the main theorems}

We give the proof of \cref{thm:Spin(3) from Diffspin}.

\begin{proof}[Proof of \cref{thm:Spin(3) from Diffspin}]
  Since the principal $\Diffspiniso$-bundle $\mcalE \to B$ has a lift $\tmcalE$ to a principal $\Diffspin$-bundle and we have assumed that $B$ is contractible, we can apply \cref{lem:Kuiper}. This lead to the fact that $B$ can be covered by open sets $\{U_\alpha\}_\alpha$ such that for each $\alpha$ there is a model of FDA on $\Alb(\restr{\mbbX}{U_\alpha})$ as in \cref{def:construction of FDA}.

  By \cref{prop:const of spin str} and \cref{prop:indep of auxiliary data}, we have a spin structure $\mfrakt_\alpha$ on
  \[
    \restr{\pi_{\Alb(\mbbX)}^\ast H^+}{\Alb(\restr{\mbbX}{U_\alpha})}.
  \]
  In \cref{sec:const of spin str}, we only discuss the case of the base space a point, but the argument also works in the general case by slight modifications. This spin structure is also independent of the choice of the model of FDA by \cref{prop:indep of FDA}.

  Finally, \cref{prop:spin str from FDA}(2) implies that $\mfrakt_\alpha$'s glue together to give a canonical spin structure on $H^+$ over the whole base space $B$. This finishes the proof.
\end{proof}

Next we prove \cref{thm:bd Dehn twist}. First, we describe the criterion whether the boundary Dehn twist is not isotopic to the identity. This is the slight modification of a statement by Y. Lin \cite[Proposition 2.1]{Lin-bd-Dehn2025}.

\begin{proposition}\label{prop:Dehn twist vs commutator}
  Let $X$ be a closed connected 4-manifold with possibly $b_1 > 0$ and set $X^\prime = X \setminus \Int D^4$. Then the following statements are equivalent:
  \begin{enumarabicp}
    \item There exist a smooth $X$-bundle $X \to \mbbX \to \Sigma_g$ over a closed oriented surface of genus $g \geq 0$ and a global section $\map{s}{\Sigma_g}{\mbbX}$ with $s^\ast w_2(T_{\Sigma_g} \mbbX) \neq 0$.
    \item There exist $2g$ boundary-fixing diffeomorphisms $a_1, b_1, \cdots, a_g, b_g$ of $X^\prime$ such that the boundary Dehn twist of $X^\prime$ is isotopic to
    \[
      [a_1, b_1] \cdots [a_g, b_g]
    \]
    relative to $\partial X^\prime$, where $[\cdot, \cdot]$ denotes the commutator.
  \end{enumarabicp}
\end{proposition}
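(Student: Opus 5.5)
Both implications go through a clutching construction: the bundle over $\Sigma_g$ is trivialized over a disk and over the complement $\Sigma_g^\circ$ of that disk, where the section can be kept constant. The obstruction $s^\ast w_2$ then localizes to the gluing along the circle, and that gluing is measured by the boundary Dehn twist. Since this is a slight modification of Lin \cite[Proposition 2.1]{Lin-bd-Dehn2025}, I would follow his argument and only check where $b_1>0$ could interfere. The point to keep in mind is that $\pi_1(SO(4))=\Z/2$ is generated by a loop whose action on a ball $D^4$ (rotation of the boundary collar) is exactly the boundary Dehn twist.

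For (2)$\Rightarrow$(1): suppose $\delta\simeq[a_1,b_1]\cdots[a_g,b_g]$ rel $\partial X'$, with the isotopy given by a path $\gamma$ in $\Diff_\partial(X')$. I would build an $X'$-bundle over $\Sigma_g^\circ$ with monodromies $a_i,b_i$. Its boundary monodromy is the product of commutators, which $\gamma$ identifies with $\delta$. Since $\delta$ is isotopic to the identity in $\Diff(X')$ once the boundary is allowed to rotate, the resulting bundle over $\Sigma_g^\circ$ with boundary $S^3$-bundle can be capped by $D^2\times D^4$. The cap is glued with a twist by the nontrivial loop in $SO(4)$. The centers of the $D^4$ fibres give a section $s$, and along $s$ the vertical tangent bundle is the framed trivial bundle over $\Sigma_g^\circ$, glued to the trivial bundle over $D^2$ by the generator of $\pi_1 SO(4)$. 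This gives $s^\ast w_2\neq0$.

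For (1)$\Rightarrow$(2): given $(\mbbX,s)$, I would first use the section and a fibrewise exponential map to identify a tubular neighbourhood of $s(\Sigma_g)$ with the disk bundle of $s^\ast T_{\Sigma_g}\mbbX$. Over $\Sigma_g^\circ$ this bundle is trivial (it is a rank-$4$ bundle over a 1-complex), and on $\partial\Sigma_g^\circ$ it is glued to the trivial bundle over the disk by a clutching loop in $SO(4)$. That loop is nontrivial exactly because $s^\ast w_2\neq0$. Removing the open disk bundle leaves an $X'$-bundle over $\Sigma_g$ whose boundary is trivialized over $\Sigma_g^\circ$ and $D^2$ separately. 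Taking holonomies in $\Diff_\partial(X')$ of the restriction to $\Sigma_g^\circ$ gives $a_i,b_i$. Over $D^2$ the bundle is trivial, and transporting that trivialization across the twisted clutching identifies the boundary loop's monodromy $\prod[a_i,b_i]$ with $\delta$ up to isotopy rel boundary.

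The main obstacle is the bookkeeping of the clutching, in particular checking that the identification of the boundary $S^3$-bundle twist with $\delta$ is correct in $\pi_0\Diff_\partial(X')$ (and not only up to conjugation). I would also need to confirm that $b_1>0$ causes no trouble: the argument is purely local near the section and never uses simple connectivity of $X$, and connectivity of $X$ is only used to move $D^4$ around. I would handle the bookkeeping through the fibration $\Diff_\partial(X')\to\Diff(X,\ast)\to \mathrm{Fr}$-type evaluation at the centre point, i.e.\ $\Diff_\partial(X')\to\Diff(X, D^4)\to GL^+(4)\simeq SO(4)$. In that fibration $\delta$ is the image of the generator of $\pi_1 SO(4)$ under the connecting map, which makes both directions a diagram chase.
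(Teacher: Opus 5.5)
Your proposal is correct and is essentially the argument the paper relies on. The paper gives no independent proof and defers to Lin's Proposition~2.1, which is this clutching along the section, or equivalently a diagram chase in $\Diff_\partial(X^\prime)\to\Diff(X,D^4)\to SO(4)$, where the connecting map sends the generator of $\pi_1 SO(4)$ to the boundary Dehn twist. The one point to state explicitly is that the family is oriented (monodromy in $\Diff^+(X)$): your claim that $s^\ast T_{\Sigma_g}\mbbX$ is trivial over $\Sigma_g^\circ$ needs $w_1=0$, not just that the base is a $1$-complex, though over $S^2$ (the only case used for \cref{thm:bd Dehn twist}) this is automatic.
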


The proof is the same as that of \cite[Proposition 2.1]{Lin-bd-Dehn2025}.

\begin{proof}[Proof of \cref{thm:bd Dehn twist}]
  By \cref{prop:Dehn twist vs commutator} it suffices to show that for every smooth $X$-bundle  $X \to \mbbX \to S^2$ and every section $\map{s}{S^2}{\mbbX}$, we have $s^\ast w_2(T_{S^2} \mbbX) = 0$. Choose a spin structure on $X$. From the existence of the global section $s$, we can construct a continuous map $S^2 \to \Alb(\Xunivspiniso)$ whose composition with $\Alb(\Xunivspiniso) \to B\Diffspiniso$ is a classifying map for $\mbbX$. Thus we have
  \[
    \alpha(\mbbX, \mfraks) = w_2(\hplus{\mbbX}).
  \]
  The clutching function for $H^2(\mbbX; \R) \to S^2$ can be taken to be the identity, and so the same is true for $\hplus{\mbbX}$. Therefore $w_2(\hplus{\mbbX})$ vanishes, and by the above equality $T_{S^2} \mbbX$ admits a spin structure. This implies $s^\ast w_2(T_{S^2} \mbbX) = 0$ and the proof of \cref{thm:bd Dehn twist} is finished.
\end{proof}

\section{The mod 2 Seiberg--Witten invariants for closed spin 4-manifolds with $b^+ \geq 3$}\label{sec:mod 2 SW}

The purpose of this section is to give an alternative proof of the calculation of the mod 2 Seiberg--Witten invariants for closed spin 4-manifolds with $b^+(X) \geq 3$, which is originally due to Baraglia \cite{baraglia-mod-2023-arxiv}.

We briefly review the definition of the Seiberg--Witten invariants taking values in the cohomology of $\Pic^\mfraks(X)$. For a detailed explanation, see Baraglia \cite[Section 1.1]{baraglia-mod-2023-arxiv} for example. We always take the coefficient as $\Z/2$. Let $X$ be a closed oriented 4-manifold possibly with $b_1(X) > 0$ and $\mfraks$ be a spin structure on $X$. From the Seiberg--Witten equations (which is perturbed by an element in $\Omega^+(X)$), we obtain the following data:
\begin{itemize}
  \item The moduli space $\mcalM$ and the principal $U(1)$-bundle $\tilde\mcalM \to \mcalM$ and
  \item the ``projection'' map $\map{p}{\mcalM}{\Pic^\mfraks(X)}$.
\end{itemize}
We set $x = c_1(\tilde\mcalM)$. The (mod 2) Seiberg--Witten invariants are defined by
\[
  \SW_{X, \mfraks}(x^k) = p_! x^k \in H^{2k - d}(\Pic^\mfraks(X); \Z/2),
\]
where $d$ is the dimension of the moduli space $\mcalM$.

Our aim is to prove the following result due to Baraglia \cite{baraglia-mod-2023-arxiv}.

\begin{theorem}\label{thm:mod 2 SW}
  Let $X$ be a closed oriented 4-manifold with $b^+(X) \geq 3$ and $\mfraks$ be a spin structure on it. The mod 2 Seiberg--Witten invariant for $X, \mfraks$ is computed as follows.
  \begin{enumarabicp}
    \item If $k > 0$, then
    \[
      \SW_{X, \mfraks}(x^k) = 0.
    \]
    \item If $b^+(X) = 3$, then
    \[
      \SW_{X, \mfraks}(1) = s_{2 + \sigma(X) / 8}(D_\mfraks),
    \]
    where $D_\mfraks$ is the families Dirac operator parametrized by $\Pic^\mfraks(X)$ and $s_\ast(D_\mfraks)$ is the Segre class.
    \item If $b^+(X) > 3$, then $\SW_{X, \mfraks}(1) = 0$.
  \end{enumarabicp}
\end{theorem}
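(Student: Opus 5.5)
The plan follows the outline in the introduction: extract a topological constraint from a complex line bundle with an anti-linear isomorphism over $\shplus{X}$, and evaluate that constraint using the Seiberg--Witten moduli space. Repeat the construction of \cref{sec:const of spin str} for a single closed spin 4-manifold $X$ with $b^+(X)=b^+\geq 3$, over a point.

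\textbf{Step 1: build the index bundle.} Take a finite-dimensional approximation $F\colon S(H^+)\times B'(V)\to S(H^+)\times W$ of the perturbed Seiberg--Witten map over $\Pic^\mfraks(X)$, with $H^+=\hplus{X}$ now of rank $b^+$. Form the $U(1)$-quotient $\mcalF$ and the twisted Dirac family $\mcalD+t\mbfh$ as before. The twist by $(\underline\C\oplus\mcalO(1))$ can be replaced by $\mcalO(k)$ to detect $x^k$. This gives a $K$-class on $S(H^+)\cong S^{b^+-1}$. Its Chern character is the families index, which I would compute by the index theorem as
\[
p_!\bigl(\text{Todd-type class}\cdot x^k\bigr),
\]
pushed through $\Pic^\mfraks(X)$ via the spin structure on $H^1$. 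Mod 2, and after integrating over $\Pic$, this becomes $\SW_{X,\mfraks}(x^k)$ paired with a Segre-type correction coming from the Dirac index bundle $D_\mfraks$.

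\textbf{Step 2: use the anti-linear action.} The action of $j$ gives an anti-linear lift $\tiota$ covering the antipodal map, with $\tiota^2=\pm1$. I would apply \cref{prop:top restr for anti-lin map}, which constrains such data.

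For $b^+=3$, the first Chern class of the determinant line on $S^2$ must then be odd exactly when $\tiota^2=-1$. The mod 2 value of that Chern class is computed by Step 1 as
\[
\SW_{X,\mfraks}(1)+s_{2+\sigma/8}(D_\mfraks),
\]
where the Segre class appears when the complex part $V_\C\ominus W_\C$ (the index of $D_\mfraks$ over $\Pic$) is inverted in the Thom isomorphism. To see that this expression vanishes, I would use the parity argument on the determinant line: $\tiota^2$ is forced to equal $(-1)^{\text{quaternionic rank}}$. Comparing with the computation gives (2).

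For $b^+>3$, $S^{b^+-1}$ has vanishing $H^2$, so the line bundle is trivial. An anti-linear $\tiota$ with $\tiota^2=-1$ over the antipodal map on $S^{n}$ with $n\geq 3$ forces a $\Z/2$-relation that annihilates the mod 2 degree. This gives (3).

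\textbf{Step 3: higher powers $x^k$, $k>0$.} Here I would compare the constructions twisted by $\mcalO(k)$ and by $\mcalO(k)\otimes j^\ast$. The anti-linear $j$ sends $x$ to $-x$, so $x^k$ and $(-x)^k$ give equal classes. Together with the symmetry of the $Pin(2)$-action on $S(V_\C)/U(1)$, which is free away from the fixed points, the contribution becomes a transfer of a free $\Z/2$-quotient and therefore vanishes mod 2, giving (1).

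\textbf{Main obstacle.} The hardest step is the bookkeeping in Step 2 that makes the Segre class $s_{2+\sigma/8}(D_\mfraks)$ appear precisely. This requires a careful identification of the Thom class of $W_\C$ over $S(V_\C)/U(1)\to\Pic$ with the Segre class of the virtual index bundle, including sign and orientation conventions mod 2. I would first try the case $b_1=0$, where the identity is just $\SW=1$ on homotopy K3 surfaces, and then extend it using the spin structure on $H^1$ from \cref{prop:spin for H^1 is not relevant}.
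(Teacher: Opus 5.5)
\textbf{There is a genuine gap.} The proposal never shows how $\SW_{X,\mfraks}$ enters the constraint of \cref{prop:top restr for anti-lin map}, and the identification you write down puts it in the wrong place.

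By the families index theorem, $c_1$ of the determinant line over $\shplus{X}$ is a families Seiberg--Witten invariant of the tautological sphere family of perturbations. The families wall-crossing formula identifies it (for $k=0$) with the Segre class $s_{2+\sigma(X)/8}(D_\mfraks)[T]$ alone. The unparametrized invariant $\SW_{X,\mfraks}$ does not appear there, so your formula $c_1\equiv\SW_{X,\mfraks}(1)+s_{2+\sigma(X)/8}(D_\mfraks)$ is unsupported.

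$\SW_{X,\mfraks}$ enters instead through the \emph{virtual rank} of the index at a single point of $\shplus{X}$, and hence through $\tiota^2$. The map $\tau^2$ is complex linear, covers the identity and equals $\pm1$ on each summand of $\mbfS$, so it acts on the determinant line by $(\pm1)^{\text{virtual rank}}$. This is the missing idea. The paper carries it out as follows:
\begin{enumerate}
\item It twists by $(\C\oplus\mcalO(1))^{\otimes k+1}$ and splits this as $\mcalO_+\oplus\mcalO_-$ (even versus odd powers of $\mcalO(1)$). On these two pieces $\tau^2$ has opposite signs.
\item The total virtual rank is $0$.
\item The $\mcalO_-$-part has the same rank parity as the $\mcalO(1)\otimes(\C\oplus\mcalO(1))^{\otimes k}$-twisted index, whose rank the index theorem identifies with $\SW_{X,\mfraks}(x^k)[T]$. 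This step uses that $k$ is even.
\item Hence $\tiota^2=(-1)^{\SW_{X,\mfraks}(x^k)[T]}$. Then \cref{prop:top restr for anti-lin map} gives (3) for $b^+\geq 4$, and, combined with the wall-crossing value of $c_1$, gives (2) for $b^+=3$.
\end{enumerate}
Your statement ``$\tiota^2=(-1)^{\text{quaternionic rank}}$'' names no specific rank and gives no relation to $\SW$. With a single twist $\mcalO(k)$, $\tau^2$ is one global sign, and you would need the rank parity of the $\mcalO(k)$-twisted index. That is in general a combination of several $\SW_{X,\mfraks}(x^i)$, not $\SW_{X,\mfraks}(x^k)$. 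Your $b^+>3$ paragraph has the same hole: knowing $\tiota^2=1$ from \cref{prop:top restr for anti-lin map} says nothing until $\tiota^2$ is tied to $\SW$.

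\textbf{Two further steps fail as written.}

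First, your Clifford module uses the spinor bundle of $H^1$ from \cref{prop:spinor with anti-lin involution} and a spinor bundle on $\shplus{X}$ separately. That requires both $b_1$ and $b^+-1$ to be even. The paper instead evaluates on subtori $T\subset\Pic^\mfraks(X)$ and observes that $\SW_{X,\mfraks}(x^k)[T]$ can be nonzero only when $\dim\shplus{X}+\dim T$ is even. It then uses the combined spinor bundle of $S^m\times T^n$ with its anti-linear lift from \cref{lem:spinor of S^m times T^n}.

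Second, Step 3 is not a valid argument. The $j$-involution covers $-1$ on $\Pic^\mfraks(X)$, not the identity, so $p_!$ does not factor through a free quotient and no transfer vanishing applies. Moreover, nothing in the argument uses $k>0$, so it would equally give $\SW_{X,\mfraks}(1)=0$. That contradicts the odd value of $\SW_{X,\mfraks}(1)$ for homology tori with odd determinant. The paper handles odd $k$ by the charge-conjugation symmetry of the integer-valued invariant (Baraglia's Lemma~3.3). It handles even $k>0$ by the same determinant-line argument as for $k=0$, using the twist $(\C\oplus\mcalO(1))^{\otimes k+1}$.
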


Baraglia \cite{baraglia-mod-2023-arxiv} computes the mod 2 Seiberg--Witten invariant even in the case of $b^+(X) < 3$, and so the argument below does not fully recover his result. His strategy is to construct an invariant coming from the $Pin(2)$-equivariance of the Seiberg--Witten map, which includes the information of the usual mod 2 Seiberg--Witten invariant.

Our method here is based on the following proposition.

\begin{proposition}\label{prop:top restr for anti-lin map}
  Let $m \geq 2$ be an integer, and $L$ be a complex line bundle over $S^m$. Suppose that
  \[
    \map{\tiota}{L}{L}
  \]
  is an anti-linear isomorphism which
  \begin{itemize}
    \item covers the antipodal map on $S^m$, and
    \item squares to $\pm 1$.
  \end{itemize}
  Then we have
  \[
    \tiota^2 = \begin{cases*}
      1             & if $m \geq 3$, \\
      (-1)^{c_1(L)} & if $m = 2$.
    \end{cases*}
  \]
\end{proposition}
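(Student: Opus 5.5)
The plan is to reduce the statement to a computation on the real projective space $\R P^m = S^m/\{\pm 1\}$. Since $\tiota$ is anti-linear, covers the antipodal map $\iota$, and satisfies $\tiota^2 = \pm 1$, the group generated by $\tiota$ (of order $2$ or $4$) acts on $L$. I split according to the sign.

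\emph{Case $\tiota^2 = 1$.} Here $\tiota$ is a free $\Z/2$-action by an anti-linear involution. Taking the $\tiota$-invariant real line subbundle fiberwise is not directly possible, since $\tiota$ moves points. Instead I pass to the quotient: $L/\tiota$ is a real rank-2 bundle over $\R P^m$ whose pullback to $S^m$ is $L_\R$, and anti-linearity means its orientation is reversed by the deck transformation. Equivalently, $L \otimes_{\R}$ (the orientation data) descends to a complex line bundle $L'$ over $\R P^m$ with $\pi^\ast L' \cong L$ twisted by the orientation character. For $m = 2$ this forces $c_1(L)$ to be even, because $\iota^\ast c_1(L) = -c_1(L)$ holds automatically (anti-linearity) and a descent argument on $\R P^2$ gives $c_1(L) \in 2\Z$. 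For $m \geq 3$ there is no constraint, since $L$ is trivial anyway.

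\emph{Case $\tiota^2 = -1$.} Then $L$ carries a free action of $\Z/4$ by $\tiota$, with the quaternionic-type relation: $\tiota$ is a "$j$-structure". The standard model is the following. Over $S^2 = \C P^1$ the tautological bundle $\mcalO(-1)$ sits inside $\C^2 = \HB$, and right multiplication by $j$ preserves the lines, is anti-linear, covers the antipodal map, and squares to $-1$. So for $m = 2$ odd degree occurs with $-1$. To prove the claim I would compare any two such $\tiota$'s. Given $(L_1, \tiota_1)$ and $(L_2, \tiota_2)$, the tensor product $L_1 \otimes \bar L_2^{\ast}$, i.e.\ $\Hom(L_2, L_1)$, inherits an anti-linear lift $\tiota_1 \otimes \tiota_2$ whose square is the product of the signs. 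Hence the sign $\epsilon(L, \tiota)$ is multiplicative and $c_1$ is additive, and it suffices to show that $\epsilon$ depends only on $c_1(L) \bmod 2$, together with the existence of the model above.

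The key step, and the main obstacle, is showing that a lift with $\tiota^2 = -1$ is impossible when $c_1(L)$ is even (for $m = 2$) or for any $L$ when $m \geq 3$. First I reduce to $L$ trivial, by tensoring with the $\HB$-model an appropriate number of times when $m = 2$; for $m \geq 3$, $L$ is already trivial. With $L = S^m \times \C$, write $\tiota(x, z) = (-x, g(x)\bar z)$ with $g \colon S^m \to U(1)$. Then $\tiota^2(x, z) = (x, g(-x)\overline{g(x)} z)$, so the condition is $g(-x) = \pm g(x)$. The sign $-1$ means $g$ descends to a map $\R P^m \to U(1)/\{\pm 1\} \cong U(1)$ whose induced map on $\pi_1$ sends the generator to the generator of $\pi_1(U(1)/\pm1)$ modulo the squaring map, i.e.\ the induced map is nonzero on $H^1(\R P^m;\Z) \to H^1(\cdot;\Z/2)$-level. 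But $g$ lifts to $U(1)$ along the double cover, so $g^\ast$ of the generator is even. Making this precise gives a contradiction via $H^1(\R P^m; \Z) = 0$ for the lifted class versus $\pi_1$ considerations. For $m \geq 2$ the map $g$ on $S^m$ is null-homotopic, so I write $g = e^{i\theta}$. Then $\theta(-x) - \theta(x) \in \pi + 2\pi\Z$ is constant, and applying the relation twice yields $2c = 0$ with $c \equiv \pi \pmod{2\pi}$, a contradiction.

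Thus a trivial $L$ admits only $\tiota^2 = 1$. Combining this with multiplicativity and the $\HB$-model gives $(-1)^{c_1(L)}$ for $m = 2$ and $1$ for $m \geq 3$.
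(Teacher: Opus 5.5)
Your argument is correct and is essentially the paper's proof. Passing to $\Hom(L_2,L_1)$ to reduce to the trivial bundle repackages the paper's comparison $\tiota = f\tiota_0$ with a reference lift, and your quaternionic structure on $\mcalO(-1)$ and its tensor powers supplies the reference $\tiota_0$ that the paper only calls easy to construct; your step $\theta(-x)-\theta(x)=c \Rightarrow 2c=0$ is exactly the paper's key computation. The separate descent discussion for $\tiota^2=1$ and the detour through $\R P^m \to U(1)/\{\pm 1\}$ are imprecise but unnecessary, since the tensor-product reduction already handles both signs, and you should let $g$ be $\C^\times$-valued and normalize by $\abs{g}$, which preserves $g(-x)\overline{g(x)}=\pm 1$.
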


\begin{proof}
  Since the $m \geq 3$ case can be derived from the $m = 2$ case, we assume that $m = 2$. It is easy to see that for each $c_1(L)$, we can construct an anti-linear isomorphism $\map{\tiota_0}{L}{L}$ which covers the antipodal map and squares to $(-1)^{c_1(L)}$. We can write
  \begin{align*}
    \tiota = f \tiota_0, &  & \map{f}{S^2}{\C^\times}.
  \end{align*}
  The condition $\tiota^2 = \pm 1$ is equivalent to
  \begin{align}\label{eq:tiota^2}
    f(-x) \overline{f(x)} = \pm 1, &  & x \in S^2.
  \end{align}
  Since $S^2$ is simply-connected, there are continuous functions
  \[
    \map{r}{S^2}{(0, \infty)},\ \map{\theta}{S^2}{\R}
  \]
  such that $f = r e^{i\theta}$. Then the condition \cref{eq:tiota^2} is rewritten as
  \[
    r(x) r(-x) = 1,\ \theta(-x) - \theta(x) = \pi k\ \text{for some $k \in \Z$}.
  \]
  By substituting $-x$ for $x$ in the above expression, we have
  \[
    \theta(x) - \theta(-x) = \pi k.
  \]
  Summing up the two equality, we have
  \[
    0 = (\theta(-x) - \theta(x)) + (\theta(x) - \theta(-x)) = 2k,
  \]
  leading to $k = 0$. This means $\tiota^2 = \tiota_0^2 = (-1)^{c_1(L)}$.
\end{proof}

Our goal is to construct a complex line bundle over $\shplus{X}$ and an anti-linear map $\tiota^2$ on it with $\tiota^2$ equal to $(-1)^{\SW_{X, \mfraks}(x^m)}$. The basic idea of the construction is the same as the construction in \cref{prop:const of spin str}: take the determinant line bundle of the $K$-theoretic degree of the finite-dimensional approximation of the Seiberg--Witten map deformed and quotiented by $U(1)$. We note that there is three main differences.
\begin{itemize}
  \item We only need to consider the case in which the base space $B$ is a point.
  \item The construction need not be canonical since our purpose is just to compute the numerical invariant.
  \item We will use $(\C \oplus \mcalO(1))^{\otimes m + 1}$ instead of $\C \oplus \mcalO(1)$ in \cref{eq:S_C}. This is because we want $SW_{X, \mfraks}(x^m)$ to appear as the virtual rank of the $K$-theoretic degree, which is calculated through the index theorem.
  \item We do not necessarily have a canonical choice of the spin$^c$ structure on $T\shplus{X}$. Instead, we will take a subtorus $T$ of $\Pic^\mfraks(X)$ and choose the spin$^c$ structure on
        \[
          T(\shplus{X} \times T)
        \]
        together with an anti-linear $\Z/2$-action. This is useful since $\shplus{X}$ and $T$ is not necessarily even-dimensional in general, but their product is always even-dimensional given that the Seiberg--Witten invariant evaluated by $T$ is nonzero.
\end{itemize}

As mentioned above, the following lemma is key.

\begin{lemma}\label{lem:spinor of S^m times T^n}
  Let $m \geq 2$, $n$ be nonnegative integers with $m + n$ even. Let
  \[
    \map{\iota}{S^m \times T^n}{S^m \times T^n}
  \]
  be an involution defined by
  \begin{align*}
    \iota(x, y) & = (-x, -y), & x \in S^m,\ y \in T^n = (\R/\Z)^n.
  \end{align*}
  Fix a spin structure on
  \[
    T(S^m \times T^n) \to S^m \times T^n
  \]
  in the following way: take the unique (up to isomorphism) spin structure on $S^m$, and take a trivial spin structure on $T^n$ associated with the trivialization of $T T^n$ coming from the Lie group structure. Let $C$ be an anti-linear self-isomorphism of $S^+ \oplus S^-$ associated with the spin structure. (The map $C$ commutes with the Clifford multiplications and $C^2 = (-1)^{\frac{1}{8}(m + n)(m + n + 2)}$.) Then, there is an anti-linear isomorphism
  \[
    \map{\tau}{S^+ \oplus S^-}{S^+ \oplus S^-}
  \]
  (where $S^\pm$ denote the spinor bundles) covering $\iota$ and commuting with $C$, which makes the following diagram commutative.
  \[
    \begin{tikzcd}[column sep=large]
      T(S^m \times T^n) \times_{S^m \times T^n} (S^+ \oplus S^-) \arrow[r, "c"] \arrow[d, "\iota \times \tau"] & S^+ \oplus S^- \arrow[d, "\tau"] \\
      T(S^m \times T^n) \times_{S^m \times T^n} (S^+ \oplus S^-) \arrow[r, "c"] & S^+ \oplus S^-
    \end{tikzcd}
  \]
  Here $c$ denotes the Clifford multiplication. The map $\tau$ is unique up to $\{\pm 1\}$. Furthermore, $\tau^2 = (-1)^{\frac{1}{8}(m + n)(m + n + 2)}$.
\end{lemma}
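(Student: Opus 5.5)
The plan is to build $\tau$ from the differential $d\iota$, lifted to the spin structure, and then composed with the charge conjugation $C$. The differential $d\iota$ is an orientation-preserving or orientation-reversing bundle isometry of $T(S^m\times T^n)$ covering $\iota$. On $S^m$ the antipodal map has degree $(-1)^{m+1}$. On $T^n$, in the Lie-group trivialization, $d\iota$ is the constant map $-\id_{\R^n}$, with determinant $(-1)^n$. The total determinant is therefore $(-1)^{m+n+1}=-1$ because $m+n$ is even, so $\iota$ reverses orientation. For this reason I would not lift $d\iota$ itself. Instead I would consider, at each point, the composite of $d\iota$ with a reflection, or better, use the $Pin$ framework. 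An orientation-reversing isometry $A$ of an even-dimensional oriented Euclidean space lifts to a $Pin$ element acting on $S=S^+\oplus S^-$ that intertwines Clifford multiplication only up to the twist $c(Av)\,\tilde A=-\tilde A\,\varepsilon\,c(v)$. Composing with the grading $\varepsilon$, or using $C$ to turn complex-linear into anti-linear, yields an operator with exactly the required commutation $c(d\iota\, v)\tau=\tau c(v)$.

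Concretely, the steps are as follows.

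First, on $T^n$ with the trivial spin structure, $\iota$ acts as $-1$ in the trivialization. The map $-\id_{\R^n}$ lifts pointwise-constantly, and since the spinor bundle is trivial $T^n\times\Delta_n$, I take a fixed linear map on $\Delta_n$ implementing $-\id$ on Clifford generators.

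Second, on $S^m\subset\R^{m+1}$ the antipodal map is the restriction of $-\id_{\R^{m+1}}$. The spin structure on $S^m$ is induced from $\R^{m+1}$ through the splitting $TS^m\oplus\nu=\underline{\R}^{m+1}$, with trivial normal bundle $\nu$. The map $-\id$ acts on the normal vector as $-1$ and also maps $\nu$ to $\nu$. Hence $d\iota\oplus(\text{normal action})$ is the constant map $-\id$ on $\underline{\R}^{m+1}$, which lifts to a constant $Pin$ element $\omega_{m+1}$, the volume element, acting on the trivial spinor module of $\R^{m+1}$. This gives an explicit lift on $S^m$.

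Third, I combine the two factors via the graded tensor product, insert $\varepsilon$ and $C$ as needed, and obtain an anti-linear $\tau$ that commutes with $c$ as in the diagram and commutes with $C$.

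Uniqueness up to $\pm1$ follows from irreducibility. Any two such $\tau$ differ by a complex-linear automorphism covering the identity that commutes with the full Clifford action. In even rank, $S^+\oplus S^-$ is an irreducible Clifford module, so this automorphism is a function $S^m\times T^n\to\C^\times$. It must also commute with $C$, which forces the values to be real, and the square condition, or connectedness together with $\tau$ being an isometry, fixes it to a locally constant $\pm1$. Since $S^m\times T^n$ is connected, the ambiguity is $\{\pm1\}$.

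For $\tau^2$: $\tau^2$ is complex-linear, covers $\iota^2=\id$, and commutes with Clifford multiplication, so by the same irreducibility it is a constant scalar $\lambda$. It commutes with $C$, so $\lambda$ is real, and it has unit norm, so $\lambda=\pm1$. Its value can be computed at one fiber, where the problem reduces to linear algebra on $\Delta_{m+n}$: $\tau$ is $C$ composed with the $Pin$ lift of $-\id_{\R^{m+n}}$, which up to $\varepsilon$ is the volume element $\omega$. Then $\tau^2=C\omega C\omega$. Using $\omega^2=(-1)^{k(2k-1)}$ with $2k=m+n$ and $C^2=(-1)^{\frac18(m+n)(m+n+2)}$, together with the commutation sign of $C$ and $\omega$, one gets $\tau^2=(-1)^{\frac18(m+n)(m+n+2)}$.

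The main obstacle is the sign bookkeeping. One must choose correctly among $\omega$, $\varepsilon\omega$ and $C$, so that $\tau$ both commutes with $c$ (rather than anticommutes) and commutes with $C$, and then verify the final exponent. I would settle this by checking small cases ($m+n=2$ and $m+n=4$) explicitly and matching against the mod-8 periodicity of $C^2$.
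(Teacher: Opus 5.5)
Your uniqueness argument is fine, including the isometry normalization you add (it is needed: otherwise $\tau$ can be rescaled by positive functions), and the idea in Step 2 is right. The computation of $\tau^2$, however, has a genuine gap.

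Since $\iota$ is fixed-point free, $\tau^2$ at $p$ is $\tau_{\iota(p)}\circ\tau_p$, and it cannot be computed by linear algebra on one fiber. You must identify $S_p$ with $S_{\iota(p)}$, and how the identification used for $p\to\iota(p)$ compares with the one used for $\iota(p)\to p$ is global information carried by the spin structure. Suppose you follow your last paragraph and identify both fibers with $\Delta_{m+n}$ as Clifford modules so that $d\iota=-\id$. Then the intertwining condition forces $\tau=\pm C\omega$ in each direction. Taking the same sign both ways, as your formula does, and using that $C$ commutes with Clifford multiplication, you get
\[
  (C\omega)^2=C^2\omega^2=(-1)^{(m+n)/2}C^2 .
\]
The same comes out with the complex volume element $\varepsilon$. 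This is the wrong answer whenever $m+n\equiv 2\pmod 4$.

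Take $S^2$ as a check. The intertwining relation shows that $\tau$ preserves $S^\pm$, which are line bundles of odd degree. By \cref{prop:top restr for anti-lin map}, an anti-linear lift of the antipodal map to such a bundle squares to $-1=C^2$, not to $+1=(C\omega)^2$. Checking small cases would expose this mismatch, but no choice among $\omega$, $\varepsilon\omega$ and $C$ in a local model repairs it. For instance, $C\varepsilon\omega$ is a scalar multiple of $C$, so it does not intertwine $c(v)$ with $c(-v)$.

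The missing idea is to carry out the whole computation in one global trivialization, extending your Step 2 to all of $\R^{m+1+n}$; this is what the paper does.
\begin{enumerate}
  \item Regard $T_{(x,y)}(S^m\times T^n)\subset\R^{m+1+n}$.
  \item Take an irreducible $Cl(\R^{m+1+n})$-module $S_0$ with an anti-linear $C_0$ that commutes or anticommutes with every $c_0(u)$.
  \item Identify $S^+\oplus S^-\cong(S^m\times T^n)\times S_0$, with $c_{(x,y)}(v)=c_0(x)c_0(v)$ and $C=C_0$ fiberwise.
\end{enumerate}
Since $d\iota(v)=-v$ and $c_0(-x)c_0(-v)=c_0(x)c_0(v)$, the sign change of the base point absorbs the sign change of the vector. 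Hence $\tau(x,y,\phi)=(-x,-y,C_0\phi)$ already has all the required properties, with no volume element anywhere, and $\tau^2=C_0^2=C^2$.

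This construction also handles the case where $m$ and $n$ are both odd uniformly. In that case your graded tensor product of the two factors' spinor bundles is not directly available.
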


\begin{proof}
  The uniqueness follows from Schur's lemma.

  The existence is proved by an explicit construction. Let
  \[
    \map{c_0}{Cl(\R^{m + 1 + n})}{\End(S_0)}
  \]
  be an irreducible representation of the Clifford algebra. Then there is an anti-linear operator
  \[
    \map{C_0}{S_0}{S_0}
  \]
  which is
  \begin{itemize}
    \item commutative with the Clifford multiplications by elements in $\R^{m + 1 + n}$ if $(m + n) / 2$ is odd, and
    \item anti-commutative with the Clifford multiplications by elements in $\R^{m + 1 + n}$ otherwise.
  \end{itemize}
  The operator $C_0$ can be taken so that $C_0^2 = -1$ if $m + n \equiv 2, 4 \mod 8$ and $C_0^2 = 1$ if $m + m \equiv 6, 8 \mod 8$.

  We define $S^\pm$ as follows. For each $(x, y) \in S^{m + 1} \times T^n$, take an oriented orthonormal basis $x, e_2, \cdots, e_{m + 1 + n}$ of $\R^{m + 1 + n}$. (We regard $x$ as a unit vector in $\R^{m + 1 + n}$.) Then $S^\pm_{(x, y)}$ is the $\pm 1$-eigenvector space of the Clifford multiplication on $S_0$ by
  \[
    i^{(m + n)/2} (x e_2) \cdots (x e_{m + 1 + n}) \in Cl(\R^{m + 1 + n}).
  \]
  Especially, we have a canonical isomorphism
  \[
    S^+ \oplus S^- \cong (S^m \times T^n) \times S_0.
  \]
  The Clifford multiplication by $v \in T_{(x, y)} (S^m \times T^n) \subset \R^{m + 1 + n}$ is given by
  \[
    c_{(x, y)}(v) = c_0(x) c_0(v).
  \]
  The operator $C$ is defined by
  \begin{align*}
    C(x, y, \phi) = (x, y, C_0 \phi), &  & x \in S^m,\ y \in T^n,\ \phi \in S_0.
  \end{align*}

  With this setup, we define $\tau$ by
  \[
    \tau(x, y, \phi) = (-x, -y, C_0 \phi).
  \]
  It commutes with $C$. If $v \in T_{(x, y)} (S^m \times T^n)$, then
  \begin{align*}
    \tau c_{(x, y)}(v) (\phi) & = (-x, -y, C_0 c_0(x)c_0(v) \phi)    \\
                              & = (-x, -y, c_0(-x) c_0(-v) C_0 \phi) \\
                              & = c_{(-x, -y)(-v)} \tau (\phi),
  \end{align*}
  which is the desired commutativity. Finally, we have $\tau^2 = C_0^2$.
\end{proof}

\begin{proof}[Proof of \cref{thm:mod 2 SW}]
  If $k$ is odd, then the charge conjugation symmetry of the (integer-valued) Seiberg--Witten invariant implies that $\SW_{X, \mfraks}(x^k) = 0$; see Baraglia \cite[Lemma 3.3]{baraglia-mod-2023-arxiv} for a detail. In the following, we assume that $k$ is even.

  It suffices to compute $\SW_{X, \mfraks}(x^k)[T]$ for every subtorus $T$ of $\Pic^\mfraks(X)$. By the dimensional reason, it can be nonzero only when
  \[
    2k + \frac{\sigma(X)}{8} + b^+(X) + 1 - \dim T = 0,
  \]
  and especially $\dim \shplus{X} + \dim T$ is even. We proceed with this assumption. By mimicking the construction of \cref{eq:Clifford bundle} with three modifications, we have a Clifford bundle
  \[
    (\mbfS, c, \nabla).
  \]
  The modifications are as follows. First, we replace $\C \oplus \mcalO(1)$ in $S_\C$ by $(\C \oplus \mcalO(1))^{\otimes k + 1}$. Second, we replace $\Pic$ by $T$. Third, we replace the tensor product of $S_{H^+}$ and $S_{H^1_\R}^+ \oplus S_{H^1_\R}^-$ in $S_\C$ by the spinor bundle described in \cref{lem:spinor of S^m times T^n}. We also have the Hermitian map $\mbfh$ in \cref{eq:h} and the anti-linear symmetry $\tau$ by mimicking \cref{eq:tau}.

  Decompose $(\C \oplus \mcalO(1))^{\otimes k + 1}$ into two parts:
  \[
    \mcalO_+ = \bigoplus_{\text{$j$ even}} \binom{k + 1}{j} \mcalO(j),\ \mcalO_- = \bigoplus_{\text{$j$ odd}} \binom{k + 1}{j} \mcalO(j).
  \]
  According to this decomposition, $(\mbfS, c, \nabla)$, $\mbfh$ and $\tau$ is also split into
  \begin{align*}
    (\mbfS_+, c_+, \nabla_+),\ \mbfh_+,\ \tau_+,\ (\mbfS_-, c_-, \nabla_-),\ \mbfh_-, \tau_-.
  \end{align*}
  Thus we have an isomorphism between the determinant line bundle
  \[
    \det(\mcalD + t \mbfh) \cong \det(\mcalD_+ + t \mbfh_+) \otimes \det(\mcalD_- + t \mbfh_-)
  \]
  for some sufficiently large $t > 0$. Furthermore, $\tau$ and $\tau_{\pm}$ induce anti-linear isomorphisms $\tiota$ and $\tiota_{\pm}$ on the above determinant line bundles, and they satisfy $\tiota = \tiota_+ \otimes \tiota_-$.

  To apply \cref{prop:top restr for anti-lin map}, we have to compute and $\tiota^2$ and $c_1(L)$ if $b^+(X) = 3$. First we compute $\tiota^2$. By the above remark, it suffices to compute $\tiota_+^2$ and $\tiota_-$. A simple calculation shows that exactly one of $\tau_+^2$ and $\tau_-^2$ is $+1$ and the other is $-1$. Furthermore, we will see both of the parities of the virtual ranks of
  \[
    \ind(\mcalD_+ + t \mbfh_+),\ \ind(\mcalD_- + t \mbfh_-)
  \]
  are the same as $\SW_{X, \mfraks}(x^k)[T]$. This then leads to
  \[
    \tiota^2 = \tiota_+^2 \otimes \tiota_-^2 = (-1)^{\SW(X, \mfraks)[T]}.
  \]

  To compute the parities of the above virtual ranks, we show that the virtual rank of $\ind(\mcalD + t \mbfh)$ is 0 and the parity of the virtual rank of $\ind(\mcalD_- + t \mbfh_-)$ is $\SW_{X, \mfraks}(x^k)[T]$. The first half of the statement is derived directly from the index theorem. For the second half of the statement, we first note that the parity of the virtual rank of $\ind(\mcalD_- + t \mbfh_-)$ can be expressed in a different manner. That is, instead of using $\mcalO_-$, we use
  \[
    \mcalO_0 = \mcalO(1) \otimes (\C \oplus \mcalO(1))^{\otimes k}
  \]
  and construct
  \[
    (\mbfS_0, c_0, \nabla_0), \mbfh_0.
  \]
  Then the parity of the virtual rank of $\ind(\mcalD_- + t \mbfh_-)$ is equal to that of $\ind(\mcalD_0 + t \mbfh_0)$. This is because
  \[
    \text{(the coefficient of $\mcalO(j)$ in $\mcalO_-$)} - \text{(the coefficient of $\mcalO(j)$ in $\mcalO_0$)}
  \]
  is even for every $j = 0, 1, \cdots, k + 1$. (Here we use the assumption that $k$ is even.) Then the index theorem tells us that the virtual rank of $\ind(\mcalD_0 + t \mbfh_0)$ is exactly $\SW_{X, \mfraks}(x^k)$.

  Finally, we compute $c_1(L)$ when $b^+(X) = 3$. By the families index theorem, $c_1(L)$ is equal to the $k + 1$-th families Seiberg--Witten invariant with respect to the tautological chamber of $\shplus{X} \times \hplus{X}$ defined and computed in Baraglia--Konno \cite[Definition 2.7, Theorem 3.6]{Baraglia--Konno-2022}. It is equal to
  \[
    s_{2 + \sigma(X) / 8}(D_{\mfraks})[T],
  \]
  the Segre class of the families Dirac operator evaluated by $[T]$, by the family version of the wall-crossing formula (which is originally due to Li--Liu \cite[Corollary 4.10]{Li--Liu-family-wall-crossing2001}, and is given an alternative proof by Baraglia--Konno \cite[Theorem 5.2]{Baraglia--Konno-2022}). Now \cref{prop:top restr for anti-lin map} gives the desired result.
\end{proof}

\appendix
\section{Spinor representations in even dimensions and the anti-linear involution compatible with Clifford multiplication}\label{app:spin rep and anti-lin inv}

The purpose of this section is to prove the following proposition.

\begin{proposition}\label{prop:spinor with anti-lin involution}
  Let $E \to B$ be a $2n$-dimensional real oriented vector bundle over a topological space $B$. Suppose that a spin structure on $E$ are given and denote the spinor bundles by $S_E^\pm$. Then there exists an anti-linear isomorphism
  \[
    \map{\tau_E}{S_E^+ \oplus S_E^-}{S_E^+ \oplus S_E^-}
  \]
  that preserves the Real part of the metric on $S_E^\pm$ and makes the following diagram commutative.
  \[
    \begin{tikzcd}[column sep=large]
      E \times_B (S_E^+ \oplus S_E^-) \arrow[r, "c_E"] \arrow[d, "(-\id)\times \tau_E"] & S_E^+ \oplus S_E^- \arrow[d, "\tau_E"] \\
      E \times_B (S_E^+ \oplus S_E^-) \arrow[r, "c_E"] & S_E^+ \oplus S_E^-
    \end{tikzcd}
  \]
  Here $c_E$ denotes the Clifford multiplication. If $n$ is even, $\tau$ preserves the grading of $S_E^\pm$ and if $n$ is odd $\tau$ reverses the grading of $S_E^\pm$. Furthermore, such a map $\tau_E$ is unique up to multiplication by $\{\pm 1\}$-valued functions on $B$, and we have
  \[
    \tau_E^2 = (-1)^{\frac{1}{2}n(n - 1)}.
  \]
\end{proposition}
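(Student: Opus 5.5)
We construct $\tau_E$ first on the model fibre and then globalize using the spin structure. Let $\map{c_0}{Cl(\R^{2n})}{\End(S_0)}$ be the irreducible complex representation, $S_0 = S_0^+ \oplus S_0^-$. The Clifford relations $c_0(v)^2 = -\abs{v}^2$ are real, so the complex conjugate representation $\overline{S_0}$ is again irreducible. By uniqueness of the irreducible representation of $Cl(\R^{2n}) \otimes \C$, there is an anti-linear map $\map{C_0}{S_0}{S_0}$ with $C_0 c_0(v) = c_0(v) C_0$ for all $v \in \R^{2n}$. Set $\tau_0 = C_0 \circ \varepsilon$, where $\varepsilon$ is the grading operator. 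Since $\varepsilon$ anticommutes with $c_0(v)$, we get
\[
  \tau_0 c_0(v) = c_0(-v) \tau_0,
\]
which is exactly the commutativity of the diagram with $-\id$ on $E$. By Schur's lemma, any two such anti-linear maps differ by a nonzero complex scalar $\lambda$. Requiring $\tau_0$ to preserve the real part of the Hermitian metric forces $\abs{\lambda} = 1$. Fixing the value of $\tau_0^2$ (which is real, see below) then cuts this down to $\lambda = \pm 1$ only if we also note that $\lambda\tau_0\lambda\tau_0 = \lambda\bar\lambda\tau_0^2$ holds automatically. So the correct normalization is different: I would instead rigidify by requiring equivariance under $Spin(2n)$.

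For the equivariance, note that $\tau_0$ intertwines $c_0(v)$ with $c_0(-v)$, hence commutes with all even products $c_0(v_1)c_0(v_2)$, and in particular with $Spin(2n) \subset Cl^0$. The candidates commuting with $Spin(2n)$ still form the family $\lambda\tau_0$, with $\abs{\lambda} = 1$, and each of them globalizes. The ambiguity therefore has to be handled by the uniqueness statement itself: I expect the intended meaning to be ``unique up to a $U(1)$-valued function, and up to $\pm1$ once a real normalization is fixed.'' I would re-read the normalization as requiring that $\tau_E$ commute with a fixed real structure, and check carefully whether the $\{\pm 1\}$ claim needs this extra condition. \textbf{This is the main obstacle.}

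For globalization, set $S_E = \tilde P \times_{Spin(2n)} S_0$ and define $\tau_E[p, \phi] = [p, \tau_0 \phi]$. This is well defined because $\tau_0$ commutes with $Spin(2n)$. The diagram commutes fibrewise, metric preservation follows from $\tau_0$ being a real isometry, and local uniqueness comes from Schur's lemma.

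The grading and square follow from the structure of $C_0$:
\begin{itemize}
  \item \textbf{Grading.} The volume element $\omega = i^n e_1 \cdots e_{2n}$ acts as $\varepsilon$ and commutes with $C_0$ up to $\overline{i^n} = (-1)^n i^n$. Hence $C_0$, and therefore $\tau_0$, preserves the grading for $n$ even and swaps it for $n$ odd.
  \item \textbf{Square.} We have $\tau_0^2 = C_0 \varepsilon C_0 \varepsilon = \pm C_0^2$, with the sign $(-1)^n$ coming from the grading behaviour. Using the standard table $C_0^2 = (-1)^{n(n+1)/2}$ for the commuting real/quaternionic structure, we get $\tau_0^2 = (-1)^n (-1)^{n(n+1)/2} = (-1)^{n(n-1)/2}$.
\end{itemize}
For the table value of $C_0^2$, I would verify the sign concretely in the tensor model $S_0 = (\C^2)^{\otimes n}$, where $C_0$ is built from the quaternionic $j$ and complex conjugation factor by factor.
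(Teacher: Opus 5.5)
Your existence argument is the paper's. The paper sets $\tau = C\,c(\omega)$ with $\omega$ a volume element, which is your $C_0\varepsilon$ up to a unit scalar, and globalizes it through $Spin(2n)$-equivariance exactly as you do. It leaves the square to a ``simple calculation'', and your computation $\tau_0^2 = (-1)^n C_0^2 = (-1)^{n(n-1)/2}$ supplies it correctly, as does your grading argument.

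The obstacle you flag in the uniqueness clause is real, and you should commit to your diagnosis rather than leave it open. For any continuous $\lambda\colon B \to U(1)$, the map $\lambda\tau_E$:
\begin{itemize}
  \item is anti-linear and preserves the real part of the metric;
  \item makes the diagram commute and has the same grading behaviour;
  \item satisfies $(\lambda\tau_E)^2 = \lambda\bar\lambda\,\tau_E^2 = \tau_E^2$.
\end{itemize}
So uniqueness up to $\{\pm 1\}$-valued functions is false as literally stated. The listed conditions give only uniqueness up to $U(1)$-valued functions, which is what Schur's lemma yields.

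The paper's proof of the model-fibre lemma gets $\pm 1$ only by invoking that $\tau'\circ\tau^{-1}$ commutes with the fixed charge conjugation $C$. That normalization does not appear in the statement; it is explicit only in the analogous lemma for $S^m \times T^n$. Your proposed repair is therefore exactly what the paper tacitly uses: require $\tau_E$ to commute with the globalization $C_E$ of $C$, which is well defined because $C$ commutes with $Spin(2n)$. The ratio of two such maps is a unit scalar commuting with an anti-linear map, hence real, hence $\pm 1$.

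One correction when you impose this normalization. With $\varepsilon = c(i^n e_1\cdots e_{2n})$ you have $C_0\varepsilon = (-1)^n \varepsilon C_0$, hence $\tau_0 C_0 = (-1)^n C_0 \tau_0$. So for odd $n$ your $\tau_0$ \emph{anti}commutes with $C_0$. You have two fixes:
\begin{itemize}
  \item impose $\tau C = (-1)^n C \tau$ instead (the uniqueness argument goes through unchanged); or
  \item use the real volume element and take $\tau = C_0\, c(e_1\cdots e_{2n})$. This commutes with $C_0$ for every $n$, differs from your $\tau_0$ by $\pm 1$ for $n$ even and by $\pm i$ for $n$ odd, and has the same square $(-1)^{n(n+1)/2}(-1)^n = (-1)^{n(n-1)/2}$.
\end{itemize}
Keep in mind that this rigidification is relative to the chosen $C$, which is itself defined only up to $U(1)$: replacing $C$ by $\mu C$ replaces the normalized $\tau$ by $\mu\tau$. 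In the paper's application, $E = H^1_\R$ with $2n = 4$, the parity issue does not arise.
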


In our situation, we use this for $E = H^1_\R$ and $2n = 4$. This proposition is a corollary of the following lemma.

\begin{lemma}\label{lem:spinor with anti-lin involution}
  Suppose that a spin structure on $\R^{2n}$ are given and denote the spinor bundles by $S^\pm$. Let $C$ be an anti-linear self-isomorphism of $S^+ \oplus S^-$ associated with the spin structure. (The map $C$ commutes with the Clifford multiplications and $C^2 = (-1)^{\frac{1}{2} n(n + 1)}$.) Then there exists an anti-linear isomorphism
  \[
    \map{\tau}{S^+ \oplus S^-}{S^+ \oplus S^-}
  \]
  that preserves the Real part of the metric on $S^\pm$ and makes the following diagrams commutative.
  \[
    \begin{tikzcd}[column sep=large]
      \R^{2n} \times (S^+ \oplus S^-) \arrow[r, "c"] \arrow[d, "(-\id)\times \tau"] & S^+ \oplus S^- \arrow[d, "\tau"] \\
      \R^{2n} \times (S^+ \oplus S^-) \arrow[r, "c"] & S^+ \oplus S^-
    \end{tikzcd}
  \]
  Here $c$ denotes the Clifford multiplication. If $n$ is even, $\tau$ preserves the grading of $S^\pm$ and if $n$ is odd $\tau$ reverses the grading of $S^\pm$. Furthermore, such a map $\tau$ is unique up to $\{\pm 1\}$, and we have
  \[
    \tau^2 = (-1)^{\frac{1}{2}n(n - 1)}.
  \]
\end{lemma}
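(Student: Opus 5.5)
The plan is to build $\tau$ from the two structures already available on $S^+ \oplus S^-$: the charge-conjugation operator $C$ and the grading operator. Write $\varepsilon$ for the chirality (grading) operator, with $\varepsilon = \pm 1$ on $S^\pm$. Up to a sign fixed by the orientation, $\varepsilon$ is Clifford multiplication by $i^n e_1 e_2 \cdots e_{2n}$ for an oriented orthonormal basis $e_1, \dots, e_{2n}$ of $\R^{2n}$. The candidate is
\[
  \tau = C \circ \varepsilon .
\]
The commutative diagram in the statement says exactly that $\tau c(v) = c(-v)\tau = -c(v)\tau$ for all $v \in \R^{2n}$. So $\tau$ must be anti-linear and \emph{anti}-commute with Clifford multiplication. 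Since $C$ commutes with Clifford multiplication and $\varepsilon$ anti-commutes with each $c(v)$, this holds for $C\varepsilon$ immediately:
\[
  C\varepsilon c(v) = -C c(v)\varepsilon = -c(v) C\varepsilon .
\]

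The key steps, in order, are as follows.

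First, compute the commutation of $C$ with $\varepsilon$. Because $C$ is anti-linear and commutes with each $c(e_k)$, we get $C\, i^n e_1\cdots e_{2n} = (-i)^n e_1 \cdots e_{2n}\, C$, so
\[
  C\varepsilon = (-1)^n \varepsilon C .
\]
This gives the grading statement at once. The operator $\varepsilon$ preserves $S^\pm$, and $C$ preserves the grading exactly when it commutes with $\varepsilon$. Hence $\tau$ preserves the grading for $n$ even and reverses it for $n$ odd.

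Second, compute the square:
\[
  \tau^2 = C\varepsilon C\varepsilon = (-1)^n C^2 \varepsilon^2 = (-1)^{n + \frac12 n(n+1)} .
\]
Since $\frac12 n(n+1) - \frac12 n(n-1) = n$, this equals $(-1)^{\frac12 n(n-1)}$, as claimed.

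Third, the metric condition. $C$ can be chosen anti-unitary, so it preserves the real part of the Hermitian metric, and $\varepsilon$ is unitary. Hence $\tau$ preserves the real part of the metric.

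For uniqueness, let $\tau'$ be another such map. Then $\tau'\tau^{-1}$ is complex linear and commutes with every $c(v)$. The module $S^+ \oplus S^-$ is irreducible over $Cl(\R^{2n}) \otimes \C$, so Schur's lemma makes $\tau'\tau^{-1}$ a scalar $\lambda$, and the metric condition forces $\abs{\lambda} = 1$. This is the step I expect to need care. Any unit scalar $\lambda$ still satisfies all the listed conditions, and it even leaves the square unchanged, since $(\lambda\tau)^2 = \lambda\bar\lambda\tau^2$. Therefore reducing to $\{\pm 1\}$ requires the implicit normalization that $\tau$ commutes with $C$, as in \cref{lem:spinor of S^m times T^n}. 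Our $\tau = C\varepsilon$ does commute with $C$, because $C(C\varepsilon) = C^2\varepsilon$ and $(C\varepsilon)C = (-1)^n \varepsilon C^2 = (-1)^n C^2 \varepsilon$; this holds for $n$ even, and for $n$ odd one replaces the requirement by the compatible relation $\tau C = (-1)^n C\tau$. With this normalization, $\lambda\tau$ again satisfies the relation if and only if $\lambda = \bar\lambda$, that is, $\lambda = \pm 1$. I would state this normalization explicitly.

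Finally, to deduce \cref{prop:spinor with anti-lin involution}, note that $C$ and $\varepsilon$ are both $Spin(2n)$-equivariant. Hence $\tau$ is equivariant and descends fiberwise to the associated spinor bundle of the spin structure on $E$. Fiberwise uniqueness up to $\pm 1$ then gives uniqueness up to $\{\pm 1\}$-valued functions on $B$.
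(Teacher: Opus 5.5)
Your proposal is correct and is essentially the paper's proof. The paper takes $\tau = C\,c(\omega)$ with $\omega$ the volume element of $Cl(\R^{2n})$, which differs from your $C\varepsilon$ only by a unit scalar ($\pm i^n$); its uniqueness argument is Schur's lemma followed by commutation with $C$, so it tacitly uses exactly the normalization you flagged. Note that with the real volume element $\tau$ commutes with $C$ for every $n$. You can therefore impose the uniform condition $\tau C = C\tau$, as in \cref{lem:spinor of S^m times T^n}, instead of your parity-dependent one.
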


\begin{proof}[Proof of \cref{prop:spinor with anti-lin involution}, assuming \cref{lem:spinor with anti-lin involution}]
  Let $P$ be the spin structure on $B$. We have canonical isomorphisms
  \[
    E \cong P \times_{Spin(2n)} \R^{2n},\ S_E^\pm \cong P \times_{Spin(2n)} S^\pm.
  \]
  The commutative diagram in \cref{lem:spinor with anti-lin involution} shows that the following diagram also commutes.
  \[
    \begin{tikzcd}[column sep=large]
      Spin(2n) \times (S^+ \oplus S^-) \arrow[r, "c"] \arrow[d, "\id \times \tau"] & S^+ \oplus S^- \arrow[d, "\tau"] \\
      Spin(2n) \times (S^+ \oplus S^-) \arrow[r, "c"] & S^+ \oplus S^-
    \end{tikzcd}
  \]
  This means that the map $\tau$ induces $\map{\tau_E}{S_E^\pm}{S_E^\pm}$.
  Furthermore, the commutative diagram in \cref{lem:spinor with anti-lin involution} implies that the diagram in \cref{prop:spinor with anti-lin involution} commutes. The uniqueness of $\tau_E$ up to $\{\pm 1\}$-valued functions follows that of $\tau$ up to $\{\pm 1\}$.
\end{proof}

\begin{proof}[Proof of \cref{lem:spinor with anti-lin involution}]
  First we show the uniqueness statement. If $\tau^\prime$ is another map with the above property, then we have the following commutative diagram.
  \[
    \begin{tikzcd}[column sep=large]
      \R^{2n} \times (S^+ \oplus S^-) \arrow[r, "c"] \arrow[d, "\id \times \tau^\prime \circ \tau^{-1}"] & S^+ \oplus S^- \arrow[d, "\tau^\prime \circ \tau^{-1}"] \\
      \R^{2n} \times (S^+ \oplus S^-) \arrow[r, "c"] & S^+ \oplus S^-
    \end{tikzcd}
  \]
  Since $\tau^\prime \circ \tau^{-1}$ is complex linear and preserves the metric, Schur's lemma implies that this is equal to the multiplication by some element in $U(1)$. Furthermore, the commutativity of $\tau^\prime \circ \tau^{-1}$ and $C$ implies that the difference should be $\{\pm 1\}$.

  Next we show the existence statement. By using a volume element $\omega \in Cl(\R^{2n})$, we define
  \[
    \tau = C c(\omega).
  \]
  Here $c(\omega)$ means the Clifford multiplication by $\omega$. This gives the desired isomorphism. Furthermore, a simple calculation shows that $\tau^2 = (-1)^{\frac{1}{2}n(n - 1)}$.
\end{proof}

\bibliographystyle{abbrv}

\begin{thebibliography}{10}

  \bibitem{Adachi-gerbe-like-2026}
  M.~Adachi.
  \newblock A gerbe-like construction in gauge theory.
  \newblock {\em Topology Appl.}, 377:Paper No. 109638, 61, 2026.

  \bibitem{baraglia-mod-2023-arxiv}
  D.~Baraglia.
  \newblock The mod 2 {S}eiberg-{W}itten invariants of spin structures and spin families.
  \newblock {\em arXiv:2303.06883}, 2023.

  \bibitem{Baraglia--Konno-2022}
  D.~Baraglia and H.~Konno.
  \newblock On the {B}auer-{F}uruta and {S}eiberg-{W}itten invariants of families of 4-manifolds.
  \newblock {\em J. Topol.}, 15(2):505--586, 2022.

  \bibitem{Bauer-almost-complex-2008}
  S.~Bauer.
  \newblock Almost complex 4-manifolds with vanishing first {C}hern class.
  \newblock {\em J. Differential Geom.}, 79(1):25--32, 2008.

  \bibitem{Bauer--Furuta-2004}
  S.~Bauer and M.~Furuta.
  \newblock A stable cohomotopy refinement of {S}eiberg-{W}itten invariants. {I}.
  \newblock {\em Invent. Math.}, 155(1):1--19, 2004.

  \bibitem{Furuta-11/8-inequality2001}
  M.~Furuta.
  \newblock Monopole equation and the {$\frac{11}8$}-conjecture.
  \newblock {\em Math. Res. Lett.}, 8(3):279--291, 2001.

  \bibitem{Kronheimer--Mrowka-Dehn-twist-K3-2020}
  P.~B. Kronheimer and T.~S. Mrowka.
  \newblock The {D}ehn twist on a sum of two {$K3$} surfaces.
  \newblock {\em Math. Res. Lett.}, 27(6):1767--1783, 2020.

  \bibitem{Li-quaternionic-bundles-2006}
  T.-J. Li.
  \newblock Quaternionic bundles and {B}etti numbers of symplectic 4-manifolds with {K}odaira dimension zero.
  \newblock {\em Int. Math. Res. Not.}, pages Art. ID 37385, 28, 2006.

  \bibitem{Li--Liu-family-wall-crossing2001}
  T.-J. Li and A.-K. Liu.
  \newblock Family {S}eiberg-{W}itten invariants and wall crossing formulas.
  \newblock {\em Comm. Anal. Geom.}, 9(4):777--823, 2001.

  \bibitem{Lin-bd-Dehn2025}
  Y.~Lin.
  \newblock A note on the boundary {D}ehn twist of ${K}3$ surfaces.
  \newblock {\em arXiv:2506.10444}, 2025.

  \bibitem{morgan1996seiberg}
  J.~W. Morgan.
  \newblock {\em The Seiberg-Witten Equations and Applications to the Topology of Smooth Four-Manifolds}.
  \newblock Princeton University Press, 1996.

  \bibitem{Qiu-Dehn2025-arXiv}
  H.~Qiu.
  \newblock The {D}ehn twist on a connected sum of two homology tori.
  \newblock {\em arXiv:2410.02461}, 2025.

  \bibitem{Ruberman-an-obstruction1998}
  D.~Ruberman.
  \newblock An obstruction to smooth isotopy in dimension {$4$}.
  \newblock {\em Math. Res. Lett.}, 5(6):743--758, 1998.

  \bibitem{Ruberman-polynomial-invarint-of-diffeo-1999}
  D.~Ruberman.
  \newblock A polynomial invariant of diffeomorphisms of 4-manifolds.
  \newblock In {\em Proceedings of the {K}irbyfest ({B}erkeley, {CA}, 1998)}, volume~2 of {\em Geom. Topol. Monogr.}, pages 473--488. Geom. Topol. Publ., Coventry, 1999.

  \bibitem{Ruberman--Strle-mod2-SW-homology-tori-2000}
  D.~Ruberman and S.~s. Strle.
  \newblock Mod 2 {S}eiberg-{W}itten invariants of homology tori.
  \newblock {\em Math. Res. Lett.}, 7(5-6):789--799, 2000.

\end{thebibliography}

\end{document}